\def\bear{\begin{eqnarray}\begin{aligned}}
\def\eear{\end{aligned}\end{eqnarray}}
\def\best{\begin{eqnarray*}}
\def\eest{\end{eqnarray*}}
\renewcommand{\theequation}{\arabic{section}.\arabic{equation}}
\newtheorem{theorem}{Theorem}[section]
\newtheorem{prop}[theorem]{Proposition}
\newtheorem{lemma}[theorem]{Lemma}
\newtheorem{cor}[theorem]{Corollary}
\newtheorem{defn}[theorem]{Definition}
\newtheorem{remark}[theorem]{Remark}
\newenvironment{rem}{\begin{remark}\rm}{\end{remark}}
\newtheorem{example}[theorem]{Example}
\newenvironment{ex}{\begin{example}\rm}{\end{example}}
\let \oldsection \section
\renewcommand{\section}{\vspace{3mm}\oldsection}
\renewcommand{\theequation}{\thesection.\arabic{equation}}
\def\ra{\rightarrow}
\def\xra{\xrightarrow} 
\def\rg{\rangle}
\def\lg{\langle}
\def\ti{\times}
\def\bd{\partial}
  \def\diam{{\rm diam}}
  \def\dist{\mbox{dist}}
\def\dim{\mathrm{dim}\, }
\def\ind{\mathrm{index}}
\def\Hom{\mathrm{Hom}}
\def\ma#1{\mathop {#1} \limits}
\newcommand{\Setminus}{\!\setminus\! }
\def\al{\alpha}
\def\ep{\varepsilon}
\def\phi{\varphi}
\def\si{\sigma}
\def\Bbb{\mathbb}
\def\R{{ \Bbb R}}
\def\Q{{ \Bbb Q}}
\def\Z{{ \Bbb Z}}
\def\cal{\mathcal}
\def\A{{\cal A}}
\def\F{\cal F}
\def\M{{\cal M}}
\def\N{{\cal N}}
\def\P{{ \cal  P}}
\def\SS{\cal S}
\def\V{{\mathscr V}}
\def\oV{\overline{\mathscr V}}
\def\X{\cal X}
\def\wh#1{\widehat{#1}}
\def\ov#1{\overline{#1}}
 \def\oM{\overline{M}}
\def\Sa{{\mathcal S}_\alpha}
\def\Sap{{\cal S}_{\alpha, p}}
\def\KP{{\mathcal K}_{\mathcal P}}
\def\Ab{{\mathcal{A}b}}
 \newcommand\cHH{\check{\mathrm{H}}}  
 \newcommand \sHH{{}^s{\mathrm{H}}}  
\newcommand\Cech{\v{C}ech\xspace }
\def\AC{{\cal A}_{\mbox{\tiny $C$}}}
\def\ACM{{\cal A}_{\mbox{\tiny $CM$}}}
\def\ALC{{\cal A}_{\mbox{\tiny $LC$}}}
\def\AEC{{\cal A}_{\mbox{\tiny $EC$}}}
\def\HBM{H^{\mbox{\tiny $BM$}}}
\def\rfc{{r\!f\!c}}
\title{\bf  Relating VFCs  on thin compactifications\vskip.2in}
\author{Eleny-Nicoleta Ionel}
\address{Stanford University, Palo Alto, CA, USA.}
\email{ionel@math.stanford.edu}
\author{Thomas H. Parker}
\address{Michigan State University, East Lansing MI, USA. }
\email{parker@math.msu.edu}
\begin{document}

\begin{abstract}
Many moduli spaces that occur in geometric analysis admit ``Fredholm-stratified thin compactifications''  in the sense of \cite{IPThin} and hence admit a relative fundamental class (RFC), also as defined in \cite{IPThin}.  We extend these results, emphasizing the naturality  of the RFC, eliminating the need for a stratification, and proving  three compatibility results: the invariants defined by the RFC agree with those defined by pseudo-cycles, the RFC is compatible with cutdown moduli spaces, and the RFC agrees with the virtual fundamental class  (VFC) constructed by J. Pardon via implicit atlases  in  all cases where both are defined. 
\end{abstract}

\maketitle

 In  symplectic Gromov-Witten  theory and in gauge theories, the central object  of study is a ``universal moduli space''.    It arises by considering  the moduli space $\M$ of all pairs $(\phi, p)$ where $\phi$ is a solution, modulo gauge, of some family of non-linear elliptic PDEs  parameterized  by the elements $p$ in a Banach space $\P$.  This space $\M$ embeds in one or more (relative) ``compactifications'' $\ov\M$, creating a diagram  
\bear
\label{0.1}
\begin{tikzcd}  
\M\ar[dr, "\pi"] \arrow[hook]{r} & \ov\M \ar[d, "\ov\pi"] \\
& \P, 
\end{tikzcd}  
\eear
where $\pi$ is differentiable and $\ov\pi$ is proper and continuous.    This   determines an entire category whose objects are pullback diagrams (``proper base changes'')
\begin{equation}\label{0.2}
\xymatrix@=5mm{
\ov{\M}_\sigma \ar[d] \ar[r] & \ov{\M} \ar[d]^\pi  \\
K\ar[r]_\sigma & P,}
\end{equation}
where $\sigma$ is a proper continuous map from a path-connected space.  Following   \cite{IPThin},  a {\em relative fundamental class} is a functor on this category that  associates to each diagram  \eqref{0.2}  a  \v{C}ech homology class
 $$
[\ov\M_\sigma]^{\rfc}\in \cHH_*(\ov\M_\sigma)
$$
that satisfies a normalization condition.   The normalization consists of the requirement that the fiber $\ov\M_p=\pi^{-1}(p)$ over  a dense set of $p\in \P$ is a ``thinly compactified manifold''  and that, taking $\sigma$ to be the inclusion of $p$,  $[\ov\M_{p}]^{\rfc}$ is the  fundamental class $[\ov\M_{p}]$.  As in Sections 1 and 2, one can use the continuity property of \Cech homology to show that there is a unique induced  class in the \Cech  homology of the fiber $\ov\M_q$ over every point $q$ in $\P$, which is the  conventional viewpoint on what a virtual fundamental class should be.
Naturality implies that the relative fundamental  class is invariant under deformations.

The existence of an RFC  was proved in \cite{IPThin} under the assumption that $\ov\M$ is a ``Fredholm stratified thin compactification''.   The first aim of this paper is to strengthen the existence result by replacing the stratification requirement by a covering condition which is much easier to show in applications (see Definition~\ref{1.Def1.1}).

 There are  several existing and proposed methods of defining invariants  in  geometric analysis approaches to Gromov-Witten theory and other gauge theories. Some of these involve
constructing a ``virtual fundamental class'' (VFC)  similar to the  RFC above.  Here, we  consider three:
\begin{enumerate} \setlength{\itemsep}{4pt}
\item The  standard definition  using pseudo-cycles \cite{rt1}, \cite{ms2}.
\item The signed count of elements in a  0-dimensional cutdown moduli space.
\item The VFC defined by J. Pardon using implicit atlases \cite{pardon}.
\end{enumerate}
These vary  in the techniques needed for their construction, and in the required assumptions on the space $\ov\M$.    Only a few results are known  about how they are related.   The second goal of this paper is to establish an initial ``common setting'' in which these definitions are comparable and equal.

\medskip

This paper has four parts and two appendices.

\smallskip

{\bf \sc Part I.}   Sections~1 and 2 review and extend the results of \cite{IPThin} on the existence of a relative fundamental class.  As in \cite{IPThin}, the construction involves a combination of Steenrod and \Cech homologies, together with the Sard-Smale theorem.   The presentation  emphasizes the naturality properties,   and is  nearly self-contained, requiring only the Extension  Lemma~3.4  from \cite{IPThin}.   The main result is Theorem~\ref{theorem1.2}:  if a compactified moduli space \eqref{0.1} is a  ``Fredholm thin compactification''  as in Definition~\ref{1.Def1.1}, then it admits a unique relative fundamental class.   As a result, Definition~\ref{1.Def1.1}  delineates the structure that must be verified in applications.

This approach establishes the existence of an RFC with  no need for gluing theorems at the boundary of $\M$, and no need to repeatedly make cobordism arguments, which now follow from the naturality of the RFC.

\smallskip

{\bf \sc Part 2.}    Sections~3-5 relate   the numerical invariants \eqref{1.invts} defined by an RFC  to  intersections of  pseudo-cycles.  This is useful because many gauge theory invariants were originally defined using intersections of pseudo-cycles, as was done for  the  Gromov-Witten invariants of symplectic manifolds  in \cite{rt1} and \cite{ms2}.  

Section~3 shows that a pseudo-cycle determines a Steenrod homology class and relates it, under appropriate assumptions, to the pushforward of the fundamental class of a thin compactification (Lemmas~\ref{pseudocycleclass} and \ref{pseudocycleTheorem}).   Section 4 shows that the geometric and homological intersection numbers of pseudo-cycles are equal (Proposition~\ref{Dot=dot}).    Sections 5   moves to the family setting and establishes our first compatibility result.  Theorem~\ref{theorem.A} states that,   for  maps from a  Fredholm thin compactification, the 
pushforward of the RFC is equal in \Cech homology  to the  pseudo-cycle class defined by the images of generic fibers.   In particular, the
numerical invariants \eqref{1.invts} defined by the RFC  are equal to those defined by  the pseudo-cycle  approach.

\smallskip

{\bf \sc Part 3.}  In Gromov-Witten and gauge theories, one often uses maps (or sections of bundles) to impose constraints;  these determine subsets of the moduli space often called ``cutdown'' moduli spaces, defined as  the inverse image of a submanifold $V$  cf. \eqref{5.cutdown}.  We show that if $f$ is  (fully)  transverse to $V$, the cutdown family is itself a Fredholm thin compactification, thus carries an RFC.

  Algebraic geometers have a formula, involving the cap product with a Thom class, that relates the virtual fundamental classes of the original moduli space and the cutdown space.
Section~6 establishes  our second compatibility result:   Proposition~\ref{P.6.cut.down} shows that the  exact same formula applies for our RFC.

\smallskip

{\bf \sc Part 4.}  Sections~7 and 8 relate our RFC with the virtual fundamental class (VFC) constructed by J.~Pardon.   Pardon's construction applies to spaces that admit an ``implicit atlas'' as is defined in \cite{pardon}.   In particular, an implicit atlas contains a special chart, called the regular locus $X^{reg}_\emptyset$, which is an oriented manifold.  Section~7 proves the fact, communicated to us by Pardon, that the restriction of the VFC to $X^{reg}_\emptyset$ is the fundamental class of $X^{reg}_\emptyset$.
 This result -- Lemma~\ref{Pardon.L} -- is independent of the previous sections, and can be  viewed  as an addendum to Pardon's paper;  the proof requires  familiarity with \cite{pardon}.   It is used in Section~8 to prove our third compatibility result:  our RFC agrees with Pardon's VFC for thin families that admit an implicit atlas (Theorem~\ref{5.maintheorem}).  
\smallskip

Appendix~A  summarizes needed facts about  Steenrod, \Cech and Borel-Moore homologies, including cap products, fundamental classes and intersection pairings.  In fact, we mostly  work in a context where Steenrod and Borel-Moore homologies are isomorphic, and make use of the well-developed literature on intersection theory for Borel-Moore theory.  Appendix~B lists results from Dimension theory that are needed in Sections~1-5.

\medskip

Throughout, all maps are assumed to be continuous, and all spaces are  Hausdorff spaces.  The  term manifold means a Hausdorff topological manifold, with additional properties (e.g. metrizability)  only where explicitly stated at beginning of each section. 
  For simplicity, after Section~1, we use only homology theories with constant coefficients in $R= \Z$ or $\Q$.  One can work consistently with a single homology theory by taking  $R=\Q$, as we note at the end of Section~\ref{section1}.


\setcounter{equation}{0}
 \section{Thin compactifications and  families} 
\label{section1}
\medskip

  The notion of a relatively thin family was defined in \cite{IPThin}, and used to define relative fundamental classes.   This section and the next summarize and  generalize the relevant definitions, first for a single space $M$, then for families.    We refer the reader to Sections~2 and 3 of  \cite{IPThin} for further details, and to   
   Appendix~\ref{sectionA},  \cite{ma}, and \cite{ES} for background facts about homology theories.  
   
\smallskip

 Steenrod homology $\sHH_*$ is a homology theory   based on infinite chains; it is defined on the category $\ALC$ of locally compact Hausdorff spaces and proper continuous maps as in \cite[Chapter 4]{ma}.    It satisfies the Eilenberg-Steenrod axioms, including the exactness axiom.  It has three additional properties that distinguish it from singular homology and make it especially well-suited for our purposes:  \\[-1mm]

 $\bullet$\  For  each  open set $U \subseteq X$ there is a  natural ``restriction'' homomorphism 
 \bear\label{1.rhoU}
 \rho_U: \sHH_*(X)\to \sHH_*(U).
 \eear
\indent  $\bullet$\  For each closed set $\iota:A\hookrightarrow X$, there is a natural  long exact sequence 
\bear
\label{1.LES}
 \cdots \longrightarrow \sHH_p(A) \overset{\iota_*}\longrightarrow \sHH_p(X) \overset{\rho}\longrightarrow \sHH_p(X \Setminus A) \overset{\partial}\longrightarrow \sHH_{p-1}(A) \longrightarrow \cdots
\eear
  
\indent  $\bullet$\  Each  oriented $d$-dimensional manifold $M$, whether compact or not,  has a fundamental class 
 $$
 [M]\in \sHH_d(M; G)
 $$
  for any abelian  coefficient group $G$.   Then  open subsets  of $U \subseteq M$ are manifolds with the induced orientation, and
 \bear\label{5.2MN}
\rho_U[M]=[U].
\eear
   
\subsection{Thin compactifications.}    Our starting point is the following definition, which was introduced in \cite{IPThin}.
  
  \begin{defn}\label{Defn1.1}
A {\em thin compactification} of  oriented $d$-dimensional topological manifold $M$ is a compact   Hausdorff space $\overline{M}$ containing $M$ such that  
 $S=\ov{M}\Setminus M$  is a closed subset of codimension~2 in Steenrod homology:
 \bear\label{1.1}
\sHH_k(S)=0 \ \quad \forall\, k> d-2. 
\eear
\end{defn}

   Under   condition \eqref{1.1},  the exact sequence \eqref{1.LES}  for   $(\ov{M}, S)$   shows that restriction to $M$ gives an isomorphism
\bear\label{YYFrseq}
\begin{tikzcd}
\rho_M: \sHH_d(\ov{M};G)  \ar{r}{\cong}&  \sHH_d(M;G).
\end{tikzcd}
\eear
Hence   the fundamental class  $[M]$ extends uniquely  to a   class of $[\ov{M}]\in \sHH_d(\ov{M};G)$   defined by 
 \bear\label{1.fc}
 \rho_M[\ov{M}] = [M].  
\eear
One  advantage of using Steenrod homology is that this extension exists with no assumptions on the differentiability or triangularizability of $\ov{M}$, and without any need to describe the structure of neighborhoods of $\ov{M}\Setminus M$ in $\ov{M}$. 

\medskip

 Similar considerations apply to cobordisms.  A   {\em thin  compactified cobordism} between $\ov{M}_1$ and $\ov{M}_2$ is a compact Hausdorff pair $(\ov W, S)$ such that 
\begin{enumerate}[(i)]
 \setlength{\itemsep}{4pt}
\item $W=\ov W\Setminus S$ is an oriented cobordism between    the $d$-dimensional  manifolds $M_1$ and $M_2$.
\item$\ov M_i\subset \ov W $ is a thin compactification of $M_i$ for $i=1,2$,  and $\ov M_1\cap \ov M_2=\emptyset$.
\item  $\sHH_{k}(S) = 0$ for all  $k\ge d$. \\[-2mm]
\end{enumerate}
 As in  \cite[Corollary 2.12]{IPThin},   it follows that 
\bear\label{1.3}
 (\iota_1)_*[\ov{M}_1] \ =\ (\iota_2)_*[\ov{M}_2] \qquad\mbox{in}\qquad \sHH_d(\ov{W}),
\eear
 where $\iota_i:\ov{M}_i\to \ov{W}$ are the inclusions.

\medskip

One can now pass from Steenrod to \Cech homology.    There is a natural transformation 
\bear\label{1.gamma}
\sHH_d(X) \to \cHH_d(X)
\eear
 defined on the category $\ALC$  (cf. Appendix~\ref{sectionA}).   While \Cech homology does not satisfy the exactness axiom,  it has a property which does not hold for singular homology that we will make crucial use of:
\begin{itemize} \setlength\itemsep{4pt}
\item  \Cech Continuity Property:   For every inverse system of compact Hausdorff spaces with inverse limit $X$, the  maps $X\to X_\al$ induce a natural  isomorphism
\bear\label{1.Cech.ContinuityProperty}
 \cHH_*(X) \overset{\cong}\longrightarrow \varprojlim  \cHH_*(X_\alpha)
 \eear
\end{itemize}
 (cf. \cite[Thm. X.3.1]{ES}).   Applying \eqref{1.gamma} to  the  classes in \eqref{1.fc} yields
 \Cech theory fundamental  classes, which we will usually write as simply
\bear\label{1.Cechfc}
[M] \in \cHH_d(M; G), \hspace{1cm}  [\ov{M}] \in \cHH_d(\ov{M}; G).
\eear
Equations  \eqref{1.fc} and \eqref{1.3} continue to hold in \Cech theory.  (If desired, one could further push these classes into the dual of compactly supported \Cech cohomology; see \S\ref{sSA.cohom} and  Remark~5.0.2 of \cite{pardon}.)

\medskip

 Appendix~B  shows that  one can replace Steenrod by  Borel-Moore homology in  \eqref{1.1} (Lemma~\ref{LemmaB1} with $X=S$).  It also shows that  condition \eqref{1.1} holds if   the  Lebesgue covering dimension  of $S$ satisfies
  \bear\label{1.0}
\dim S \le d-2,
\eear
 and  this is true if $S$ is covered by  manifolds of dimension $\le d-2$ in  the sense of  Lemma~\ref{LemmaA2}.    This   provides a practical method  to verify \eqref{1.1}.  For example, if 
  $M$ is  a smooth complex quasi-projective variety with positive dimension,  then its closure $\ov{M}$ and its 1-point compactification  $M^+$ are both thin compactifications.

\smallskip

\subsection{Families.}  These ideas extend naturally to families.  Consider a  proper continuous map
\bear
 \label{1.8diagram}
\xymatrix{
\ov{\M} \ar[d]^{\ov\pi} \\
\P
}
\eear
which we
regard  as a family of spaces, namely the fibers $\ov\M_p=\ov\pi^{-1}(p)$ for $p\in\P$. 
Let ${\mathcal K}_\P$ denote the category whose objects are  continuous maps $\phi:K\to\P$ where $K$ is a nonempty, compact,  path-connected Hausdorff space, and whose morphisms are commutative diagrams
\bear
 \label{2.diagram1}
\xymatrix@C=10mm@R=.3mm{
K' \ar[dd]_\psi \ar[dr]^{\phi'} & \\
& \P \\
K \ar[ru]_\phi
}
\eear
of   continuous maps.    Each map  $\phi:K\to \P$ in $\KP$ gives a pullback diagram (\ref{2.Cat}a) of proper continuous maps,
where
\bear\label{1.M_K}
\ov\M_\phi\ =\ \big\{ (k,x)\in K\times \ov\M\, \big|\, \phi(k)=\ov\pi(x)\big\}
\eear
 is the fiber product  of $\phi$ and $\ov\pi$. Similarly,   the morphism  \eqref{2.diagram1} gives the diagram (\ref{2.Cat}b).      

\refstepcounter{equation}\label{2.Cat}
\begin{equation}
\tag{\theequation a,b}\label{2.Cat1}
\begin{aligned}
\xymatrix@=8mm{
\ov{\M}_\phi  \ar[d]_{\ov\pi_\phi} \ar[r]_{\wh{\phi}} & \ov{\M} \ar[d]^{\ov\pi}   \\
K\ar[r]_\phi & \P
} \end{aligned}
\hspace{3cm}  
\begin{aligned}
\xymatrix@=5mm{
& &   \ov\M \ar[d]^{\ov\pi}\\
\ov\M_{\phi'}  \ar[r]_{\wh{\psi}}  \ar[d] \ar[rru]^{\wh\phi'} &  \ov\M_\phi  \ar[d] \ar[ru]_{\!\!\wh\phi}&  \P   \\
K'\ar[r]_{\psi} \ar[rru]|!{[rr];[u]}\hole   & K\ar[ru]&
 }
\end{aligned}
\end{equation}

\subsection{Relative homology functors.}  For each $d\in \Z$, \Cech homology determines a functor  
\bear\label{1.Cechfunctor}
\cHH_d:{\mathcal K}_\P\to \Ab
\eear
 with values in  the category $\Ab$ of abelian groups by $\phi\mapsto \cHH_d(\ov\M_\phi)$. 

\begin{defn}
\label{2.RHFdefn}
 A lift  of \eqref{1.Cechfunctor} to a functor 
\bear\label{1.functor}
\mu:{\mathcal K}_\P\to \Ab^* 
\eear
into the category of  abelian groups  with a distinguished element is called  a  {\em  relative homology functor}   of degree $d$  associated to $\ov\pi:\ov\M\to\P$.
\end{defn}

Thus a  relative homology  functor assigns to each  continuous map $\phi:K\to\P$ from a  non-empty, compact,  path-connected Hausdorff space $K$ an element  
$$
\mu(\phi)\in \cHH_d(\ov\M_\phi)
$$
such that each morphism  \eqref{2.diagram1} induces an equality
\bear \label{1.4}
\wh\psi_*[\mu(\phi')] =\mu(\phi).
\eear
Two special cases are especially important:
\begin{enumerate}[(i)]
\item For each $p\in \P$ , we can take $\phi$ to be the inclusion of $p$ into $\P$.  Then $\ov\M_\phi$ is identified with the fiber 
$\ov\M_p$ over $p$, and we  obtain a class 
$$
\mu(p)\in \cHH_d(\ov\M_p).
$$
\item   For each  path $\gamma:[0,1]\to \P$ with endpoints $p$ and $q$, let $\iota_0$ (resp. $\iota_1$) be the inclusion $p\mapsto 0\in[0,1]$ (resp. $q\mapsto 1\in[0,1]$).  Applying 
 \eqref{1.4} first with $\psi=\iota_0$ and $\phi=\gamma$, then with $\psi=\iota_1$ and $\phi=\gamma$, yields  the consistency condition  
  \bear\label{2.consistent}
(\iota_0)_*\mu(p)=  (\iota_1)_*\mu(q) \quad \mbox{in} \quad \cHH_d( \ov{\M}_{\gamma}).
\eear

\end{enumerate}

\medskip

The importance of these two cases is reflected in the following extension and uniqueness result.
The proof is an application of the \Cech continuity property \eqref{1.Cech.ContinuityProperty}.

\begin{prop}\label{extLemma}
\label{2.ExtensionLemma}
 Let $\ov\pi:\ov\M\ra \P$ be a proper continuous map from a Hausdorff space to a locally path-connected metrizable space. 
\begin{enumerate}\setlength{\itemsep}{4pt}
\item[(a)] (Uniqueness)   If two relative homology functors associated to $\ov\pi$  are equal on a dense set of points $p\in\P$, then they are equal.
\item[(b)]  (Extension)   Suppose that there is a  dense subset $\P^*$ of $\P$, and an assignment
\bear\label{lemma3.4eq1}
p \mapsto \mu(p)\in \cHH_d(\ov{\M}_p)
\eear
defined for $p\in \P^*$   such that,  for any $p, q\in \P^*$,  \eqref{2.consistent} holds for each  path $\gamma:[0,1]\to\P$   in a $C^0$ dense subset of the space of paths in $\P$ from $p$ to  $q\in\P^*$. Then \eqref{lemma3.4eq1} extends uniquely to a relative homology functor.
\end{enumerate}

 \end{prop}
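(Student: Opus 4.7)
The plan is to reduce both parts to the \Cech Continuity Property \eqref{1.Cech.ContinuityProperty} by building inverse systems of compact Hausdorff neighborhoods in $\ov{\M}$. The key geometric input is that properness of $\ov\pi$, together with local path-connectedness and metrizability of $\P$, lets me pick for each $p\in\P$ a decreasing sequence $V_1\supset V_2\supset\cdots$ of path-connected open neighborhoods with $\bigcap_m\ov{V_m}=\{p\}$, and set $X_m:=\ov\pi^{-1}(\ov{V_m})$. Then each $X_m$ is compact Hausdorff, $\bigcap_m X_m=\ov{\M}_p$, so by \eqref{1.Cech.ContinuityProperty},
\[
\cHH_d(\ov{\M}_p)\;\cong\;\varprojlim_m\cHH_d(X_m).
\]

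For part (a), given functors $\mu_1,\mu_2$ agreeing on a dense set $D\subseteq\P$, I would first show $\mu_1(p)=\mu_2(p)$ for every $p\in\P$. For each $m$, choose $p_m\in V_m\cap D$ and a path $\gamma_m\subset V_m$ from $p$ to $p_m$; applying \eqref{2.consistent} to $\gamma_m$ identifies the images of $\mu_i(p)$ and $\mu_i(p_m)$ in $\cHH_d(\ov{\M}_{\gamma_m})$, and hence in $\cHH_d(X_m)$ via the projection $\ov{\M}_{\gamma_m}\to X_m$. Since $\mu_1(p_m)=\mu_2(p_m)$, the images of $\mu_1(p)$ and $\mu_2(p)$ agree in each $\cHH_d(X_m)$, and \Cech continuity forces $\mu_1(p)=\mu_2(p)$. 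To upgrade to arbitrary $\phi\colon K\to\P$, pick $k_0\in K$: the inclusion $\{k_0\}\hookrightarrow K$ is a morphism in $\KP$ from $\phi(k_0)\colon\ast\to\P$ to $\phi$, and \eqref{1.4} writes $\mu_i(\phi)$ as a pushforward of $\mu_i(\phi(k_0))$, giving $\mu_1(\phi)=\mu_2(\phi)$.

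For part (b) I would construct $\mu$ in three stages. \emph{Stage 1 (points):} for each $p\in\P$, pick $p_m\in V_m\cap\P^*$ and let $\alpha_m\in\cHH_d(X_m)$ be the pushforward of $\mu(p_m)$. Using \eqref{2.consistent} for dense-subset paths joining two choices of $p_m$ inside the path-connected $V_m$, I would check $\alpha_m$ depends only on $p$ and that $\{\alpha_m\}$ is compatible under $X_{m+1}\hookrightarrow X_m$; \Cech continuity then defines $\mu(p)\in\cHH_d(\ov{\M}_p)$, agreeing with the given class on $\P^*$. \emph{Stage 2 (paths):} extend \eqref{2.consistent} to every path $\gamma\colon[0,1]\to\P$ between arbitrary $p,q\in\P$, by realizing $\ov{\M}_\gamma$ as the inverse limit of compact thickenings $Z_m:=(\mathrm{id}\times\ov\pi)^{-1}(\ov{N_m})\cap([0,1]\times\ov\pi^{-1}(C))$, where $N_m$ is a $1/m$-neighborhood of the graph $\Gamma_\gamma\subset[0,1]\times\P$ and $C$ is a fixed compact neighborhood of $\gamma([0,1])$; then uniformly approximate $\gamma$ by paths $\gamma_n$ whose endpoints $p_n,q_n$ lie in $\P^*$ and which are drawn from the dense subset of paths from $p_n$ to $q_n$, apply \eqref{2.consistent} for $\gamma_n$ inside $Z_m$, and pass to the limit $n,m\to\infty$ via Stage 1 and \Cech continuity. \emph{Stage 3 (general $\phi$):} for $\phi\colon K\to\P$ set $\mu(\phi):=\wh\iota_*\mu(\phi(k_0))$ for any $k_0\in K$; independence of $k_0$ follows from Stage 2 applied to $\phi\circ\beta$ for a path $\beta$ in $K$ joining two choices, and functoriality \eqref{1.4} for a general morphism \eqref{2.diagram1} reduces to the same kind of check after restricting to points.

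The main obstacle is Stage 2. The difficulty is that $\ov{\M}_\gamma\subset[0,1]\times\ov{\M}$ depends nontrivially on the parametrization of $\gamma$ (for $\gamma\equiv p$ constant it is $[0,1]\times\ov{\M}_p$, while for $\gamma$ injective it is essentially a graph), so a naive uniform approximation $\gamma_n\to\gamma$ does not produce an inverse system with limit $\ov{\M}_\gamma$. The thickening $Z_m$ of the graph $\Gamma_\gamma$ puts $\ov{\M}_{\gamma_n}$ and $\ov{\M}_\gamma$ inside a common compact Hausdorff space for $n$ large, so the equalities $(\iota_0)_*\mu(p_n)=(\iota_1)_*\mu(q_n)$ can be transported to $\cHH_d(Z_m)$; the technical heart is then verifying that the double limit $n\to\infty$ (path approximation) followed by $m\to\infty$ (tightening the neighborhood of $\Gamma_\gamma$) can be carried out so that \Cech continuity yields the desired equality in $\cHH_d(\ov{\M}_\gamma)$.
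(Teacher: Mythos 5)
Your proposal sets out to reprove, from scratch, what the paper simply quotes: the paper's own argument for (b) consists of observing that the hypotheses match those of Extension Lemma~3.4 of \cite{IPThin} (which supplies the extension of $p\mapsto\mu(p)$ to all of $\P$ together with \eqref{2.consistent} for all paths), followed by the short formal step of defining $\mu(\phi)=\wh\iota_*\mu(p)$ for a point $p\in K$ and checking independence and \eqref{1.4}; part (a) is then deduced from the uniqueness in (b). Your Stages~1--2 are therefore an attempt to re-derive the cited lemma, and as written they contain a genuine error. The sets $X_m=\ov\pi^{-1}(\ov{V_m})$ are \emph{not} compact: properness only gives compactness of preimages of \emph{compact} sets, $\ov{V_m}$ is merely closed, and $\P$ is not assumed locally compact. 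In the intended applications $\P$ is an infinite-dimensional Banach manifold, where compact subsets have empty interior, so neither the closures $\ov{V_m}$ nor the ``fixed compact neighborhood $C$ of $\gamma([0,1])$'' invoked in your Stage~2 exist. Consequently the continuity property \eqref{1.Cech.ContinuityProperty} cannot be applied to your inverse systems $\{X_m\}$ or $\{Z_m\}$. The compacta that actually work are the fiber products over paths: $\ov\M_\gamma$ is compact because $\gamma([0,1])$ is compact and $\ov\pi$ is proper. One defines $\mu(p)$ by choosing $p_m\in V_m\cap\P^*$, joining $p_m$ to $p_{m+1}$ by dense-subset paths inside $V_m$, concatenating into a single path $\Gamma$ ending at $p$, and applying \Cech continuity to the nested compact tails $\ov\M_{\Gamma|_{[1-2^{-m},1]}}$, whose intersection is $\ov\M_p$.

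The second gap is that your Stage~2 --- extending \eqref{2.consistent} to an arbitrary path $\gamma$ between arbitrary points of $\P$ --- is exactly the substantive content of the cited Extension Lemma, and you explicitly leave its ``technical heart'' (the double limit in $n$ and $m$, and the verification that the classes transported from $\ov\M_{\gamma_n}$ into the thickenings stabilize compatibly) unproved. This cannot be waved through: your Stage~3 needs consistency along the arbitrary path $\phi\circ\beta$ to show that $\mu(\phi)$ is independent of the chosen basepoint $k_0$, so without Stage~2 the functor is not well defined. You have correctly located the difficulty (that $\ov\M_{\gamma}$ depends on $\gamma$ itself, so nearby paths do not form an inverse system converging to $\ov\M_\gamma$), but identifying the obstacle is not the same as overcoming it; as it stands the proposal neither closes this step nor is entitled to cite \cite{IPThin} for it, since it purports to be self-contained.
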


\begin{proof} 
(a)  Relative homology functors satisfy \eqref{1.4}, and hence \eqref{2.consistent}.  Statement (a) therefore follows from the uniqueness in statement (b).

(b) After fixing a metric on $\P$, the hypotheses of (b)  are the same as those of  Extension Lemma~3.4 of \cite{IPThin}, which shows that \eqref{lemma3.4eq1} extends uniquely  to an assignment, still denoted $p \mapsto\mu(p)$,  defined for all $p\in\P$ and satisfying \eqref{2.consistent} for all paths $\gamma$ with endpoints $p,q\in\P$.    Then,  given a map $\phi:K\to\P$ in $\KP$, choose a point $p\in K$ and set  
\bear\label{1.defmuphi}  
\mu(\phi)=\wh\iota_* \mu(p) \in \cHH_d(\ov{\M}_\phi), 
\eear
  where $\wh\iota$ is the inclusion $\ov\M_p\hookrightarrow \ov\M_\phi$.  Given another point $q\in K$, choose  a path $\sigma:[0,1]\to K$ from $p$ to $q$.  Applying
\eqref{2.consistent}  to the path $\gamma=\phi\circ \sigma$ and pushing forward  in homology by  the map $\wh{\si}: \ov\M_\gamma \ra \ov\M_\phi$ shows that the class \eqref{1.defmuphi}  is  independent of  the choice of $p$.    With this established, \eqref{1.4} follows from \eqref{1.defmuphi}   by applying $\wh\psi_*$. 

 Finally, this extension is unique:   two relative homology functors $\mu$ and $\mu'$  that agree on all points $p\in\P^*$ must also agree for all $p\in\P$ by the uniqueness of \cite[Lemma~3.4]{IPThin}. But then they  agree for all $\phi$ in $\KP$:   applying  \eqref{1.4}, for both $\mu$ and $\mu'$,  to the inclusion of any point $p \hookrightarrow K$ shows that $\mu(\phi) =  \wh\iota_* \mu(p) =\wh\iota_* \mu'(p) = \mu'(\phi)$.
\end{proof}

\bigskip

We next note four functorial constructions.   All  four start with a   relative homology functor $\mu$ of degree $d$ on a
 family $\ov\pi:\ov\M\to \P$ as in \eqref{1.8diagram}.

\subsection{Pullbacks.}    For each continuous map $\sigma:{\cal Q}\to \P$
from a space ${\cal Q}$, there is an associated pullback family and a commutative square
\bear\label{1.subfamily}
\xymatrix@=5mm{
\ov{\M}_\sigma  \ar[d]_{\ov\pi_\sigma} \ar[r]_{\widehat{\sigma}} & \ov{\M} \ar[d]^{\ov\pi}   \\
{\cal Q}\ar[r]_\sigma & \P
} 
\eear
where, one can check, $\ov\pi_\sigma$ is proper.   Then there is an induced  pullback relative homology functor $\sigma^*\mu$:  for each  $\phi:K\to{\cal Q}$ in ${\mathcal{K_{\cal Q}}}$, the composition $\sigma\circ \phi$ is in $\KP$, and we define
$$
(\sigma^*\mu)(\phi)\ =\ \mu(\sigma\circ \phi) 
$$
in the \Cech homology, using the identification  of $(\ov\M_{\sigma})_\phi$ with $\ov\M_{\sigma\circ\phi}$, defined  by the projection $(\ov\M_{\sigma})_\phi\ra \ov\M_{\sigma\circ\phi}$.
 Thus defined, $\sigma^*\mu$ satisfies \eqref{1.4}, so is a relative homology functor of degree $d$ associated to $\ov\pi_\sigma$ .

\subsection{ Pushforwards and Invariants.}   Given a continuous map
$$
\ov{f}:\ov\M\to Z
$$
to a Hausdorff space $Z$,  one  obtains numerical invariants as follows.

\begin{cor}
\label{1.invtcor}
For $p\in\P$ and $\beta\in \cHH^*(Z;\Z)$,  the class
$$
\ov{f}_*\mu(p)\in \cHH_d(Z;\Z)
$$
and the function
\bear\label{1.invts}
I_\beta(p)\ =\ \left\langle\ov{f}_* \mu(p), \;  \beta \right\rangle
\eear
are independent of $p$ on path-connected components of $\P$. 
\end{cor}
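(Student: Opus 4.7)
The plan is to reduce the statement to the consistency condition \eqref{2.consistent}, which is a built-in property of any relative homology functor. Both assertions of the corollary will follow by pushing this identity forward by $\ov{f}$ composed with the canonical projection from the pullback over a path, and then (for the second assertion) pairing with $\beta$.

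In detail, let $p,q$ lie in the same path-component of $\P$ and choose a continuous path $\gamma:[0,1]\to\P$ from $p$ to $q$. Since $[0,1]$ is a non-empty compact path-connected Hausdorff space, $\gamma$ is an object of $\KP$, and the inclusions of $p$ and $q$ into $\P$ factor through $\gamma$ via the morphisms $\iota_0,\iota_1\in\KP$ of \eqref{2.diagram1}. The consistency condition \eqref{2.consistent}, which is built into the definition of a relative homology functor, then yields
$$
(\wh\iota_0)_*\,\mu(p)\ =\ (\wh\iota_1)_*\,\mu(q)\qquad\text{in}\qquad \cHH_d(\ov\M_\gamma).
$$

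Next I would apply the continuous map $\ov{f}\circ\wh\gamma:\ov\M_\gamma\to Z$ to both sides, where $\wh\gamma$ is the projection in diagram (\ref{2.Cat}a) associated to $\gamma$. The key observation is that $\wh\gamma\circ\wh\iota_0:\ov\M_p\hookrightarrow\ov\M$ and $\wh\gamma\circ\wh\iota_1:\ov\M_q\hookrightarrow\ov\M$ are simply the fiber inclusions (this is immediate from the description \eqref{1.M_K} of the fiber product). Functoriality of pushforward then gives
$$
\ov{f}_*\,\mu(p)\ =\ (\ov{f}\circ\wh\gamma)_*\,(\wh\iota_0)_*\,\mu(p)\ =\ (\ov{f}\circ\wh\gamma)_*\,(\wh\iota_1)_*\,\mu(q)\ =\ \ov{f}_*\,\mu(q)
$$
in $\cHH_d(Z;\Z)$, proving the first assertion.

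For the second assertion, the Kronecker pairing $\langle\,\cdot\,,\beta\rangle:\cHH_d(Z;\Z)\to\Z$ is a homomorphism, so the equality $\ov f_*\mu(p)=\ov f_*\mu(q)$ forces $I_\beta(p)=I_\beta(q)$. There is no serious obstacle here: the whole content is packaged into the consistency condition on paths, which is precisely why that case was singled out in the definition of a relative homology functor. The only small point to verify is that $\wh\gamma\circ\wh\iota_0$ and $\wh\gamma\circ\wh\iota_1$ are the fiber inclusions, which is clear from \eqref{1.M_K} but worth stating explicitly so the reader sees that the ambient space of the equality is pushed all the way to $Z$.
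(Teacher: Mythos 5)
Your proof is correct and follows essentially the same route as the paper: both reduce the statement to the consistency condition \eqref{2.consistent} over a path $\gamma$ from $p$ to $q$, push the resulting identity forward to $Z$ via $\ov f\circ\wh\gamma$, and then pair with $\beta$. Your version just makes explicit the small verification (that $\wh\gamma\circ\wh\iota_0$ and $\wh\gamma\circ\wh\iota_1$ are the fiber inclusions) that the paper leaves implicit.
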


\begin{proof}
Any pair $p,q$ of points in the same path-connected component of $\P$ are the endpoints of a path $\gamma:[0,1]\to \P$.  By \eqref{2.consistent}, $\ov{f}_*\mu(p)$ is equal to $\ov{f}_*\mu(q)$ in the image of $\ov{f}_*\gamma_*: \cHH_d(\ov\M_\gamma)\to  \cHH_d(Z)$, and hence $I_\beta(p)=I_\beta(q)$.
\end{proof}

\subsection{Cap Products.} Suppose that  \eqref{1.8diagram} is such that   there is a  cap product 
\best 
\cap: \cHH_d(X)  \otimes \cHH^k(X)\ra \cHH_{d-k}(X) 
\eest
defined on a category of spaces $X$ that includes all $\ov\M_\phi$ for $\phi\in{\cal K}_\P$, and which has the naturality property $  f_*a \cap \xi =f_*(a\cap f^*\xi)$ for all maps $f:X\to Y$ in that category.   Then, 
given a relative homology functor $\mu$, each cohomology element $\alpha\in \cHH^*(\ov\M)$ determines a relative homology functor $\mu \cap \alpha$ defined by
\bear\label{1.20}
(\mu \cap \alpha)(\phi)\,=\, \mu(\phi)\cap \wh\phi^*\alpha,
\eear
for each $\phi\in \KP$.   Indeed, for maps $\phi'=\phi\circ\psi$ and $\wh{\phi}'=\wh\phi\circ \wh\psi$ as in \eqref{2.diagram1} and \eqref{2.Cat}(b), the naturality of $\cap$  and formula  \eqref{1.4} for $\mu$ imply that
\begin{align}\label{1.capcalc}
\wh\psi_*\big[(\mu\cap \alpha)(\phi')\big]\, =\,    
 \wh\psi_*\big[\mu(\phi')  \cap \wh\psi^*(\wh\phi^*\alpha) \big]\, =\, 
\big[\wh\psi_*\mu(\phi') \big]\cap (\wh\phi^*\alpha) 
\, =\, (\mu\cap \alpha)(\phi).
\end{align}
Thus $\mu\cap\alpha$ satisfies  \eqref{1.4}, so is a relative homology functor.

\medskip

Sections~\ref{section5} and \ref{section6} describe how the invariants \eqref{1.invts}  and the cap products  \eqref{1.20} are related  to intersection numbers.

\subsection{Relative fundamental classes.}\label{sS.rfc}   Following Definition~3.1 of  \cite{IPThin}, we now impose additional structure on the map $\ov\pi:\ov\M\ra \P$.  In particular, we assume that  $\P$ is locally path-connected and metrizable, and is a Baire space.

\begin{defn}\label{1.def.rtf}
We say that a family   $\ov\pi:\ov\M\ra \P$ as in \eqref{1.8diagram}  is a {\em relatively thin family} if  $\P$ is  a  locally path-connected   metrizable  Baire space, and  there  is a number $d$ and  a second category set $\P^*\subseteq \P$  such that for each $p,q\in\P^*$
\begin{enumerate}\setlength{\itemsep}{4pt}
\item[(a)]   $\ov\M_p$  is a thin compactification of the $d$-dimensional oriented manifold $\M_p=\pi^{-1}(p)$.

\item[(b)]  The space of continuous paths in $\P$ from   $p$ to $q$ contains a $C^0$-dense subset of paths $\gamma$ for which the fiber product  $\ov\M_\gamma$ 
 is  a thin cobordism from $\ov\M_p$ to $\ov\M_q$.
\end{enumerate}
\end{defn}
These conditions ensure that the  generic fiber $\ov\M_p$ of $\ov{\pi}$  has a fundamental class as in \eqref{1.Cechfc}.   This leads to the  notion of  a relative fundamental class, which is our key object of study.

\begin{defn}
\label{1.defnVFC}
 A {\em relative fundamental class} \mbox{\rm (RFC)}   of a relatively thin family   is a relative homology functor $\mu$  of degree $d$ such that
\best\label{1.normalization}
\mu(p)=[\ov{\M}_p]
\eest
for each $p$ in the set $\P^*$.  We will often  write $\mu(\phi)$ as   $[\ov\M_\phi]^\rfc$.
 \end{defn}
 
This is equivalent to  Definition~4.1 of \cite{IPThin}:  one direction is clear, the other follows immediately from Proposition~\ref{extLemma}(b).

 \medskip

Note that  a RFC is not a single  element of the  homology of some space.    Rather, 
it is a functor that, as in  \eqref{1.functor},  assigns to each  continuous map $\phi: K\to \P$  from a non-empty, compact  path-connected space $K$,  a \Cech homology class 
$$
\mu(\phi) = [\ov\M_\phi]^\rfc \in \cHH_d(\ov\M_\phi)
$$
that  satisfies  a naturality axiom and a normalization axiom:
\begin{itemize} \setlength\itemsep{10pt}

\item[{\bf A1.}] Every triangle \eqref{2.diagram1} of  continuous maps, where $K$ and $L$  are  nonempty, compact, and path-connected,  induces an equality  
\bear\label{1.A1}
\widehat \psi_*[\ov{\M}_{\phi'}]^\rfc= [\ov{\M}_\phi]^\rfc. 
\eear

\item[{\bf A2.}] For each  $p\in \P^*$, 
\bear\label{1.A2}
\mbox{  $[\ov{\M}_p]^\rfc$ is the fundamental class $[\ov{\M}_p]$.   }
\eear
\end{itemize}

  \bigskip

Finally, we note that relative fundamental classes are natural  under certain changes of the parameter space $\P$ (cf. \cite[Section 6]{IPThin}).    A {\em morphism} between   relatively thin  families  $\ov\pi$ and $\ov\pi'$ is a  diagram of continuous maps 
   \bear\label{2.QPdiagram}
\xymatrix@=5mm{
\ov{\M}  \ar[d]_{\ov\pi} \ar[r]_{\widehat f} & \ov{\N} \ar[d]^{\ov\pi'}   \\
\P\ar[r]_f & {\cal Q}.
} 
\eear
We say that  a morphism \eqref{2.QPdiagram}  is {\em generically degree 1} if there exist  second category subsets   $\P^*$ of $\P$ and ${\cal Q}^*$ of ${\cal Q}$ satisfying 
the condition  of Definition~\ref{1.def.rtf}(a), with $f(\P^*) \subseteq {\cal Q}^*$, and such   for each $p\in\P^*$\!,  $\wh f $ restricts to   a degree~1 map  
\bear\label{2.MpNp}
\widehat{f}_p:\M_p\to \N_{f(p)}.
\eear

\begin{lemma}
\label{basechangelemma}
If  a morphism \eqref{2.QPdiagram} of relatively thin families is  generically degree 1, then 
\bear\label{2.pullback}
\mu^\rfc\ =\ f^*\nu^\rfc.
\eear
 \end{lemma}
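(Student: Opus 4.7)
The plan is to apply the uniqueness in Proposition~\ref{extLemma}(a). The map $\wh f$ factors canonically through the pullback family $\ov\pi_f:\ov\N_f\to\P$, so pushing $\mu^\rfc$ forward along this factorization yields a relative homology functor of degree $d$ on $\ov\pi_f$, and the identity \eqref{2.pullback} is a statement comparing two such functors. Since $\P$ is a Baire space and $\P^*$ is second category, hence dense, Proposition~\ref{extLemma}(a) reduces \eqref{2.pullback} to checking it pointwise at each $p\in\P^*$.

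Fix $p\in\P^*$; the hypothesis $f(\P^*)\subseteq{\cal Q}^*$ places $f(p)$ in ${\cal Q}^*$, so both $\ov\M_p$ and $\ov\N_{f(p)}$ are thin compactifications of oriented $d$-manifolds, and the normalization axiom A2 gives
\[
\mu^\rfc(p)=[\ov\M_p],\qquad (f^*\nu^\rfc)(p)=\nu^\rfc(f(p))=[\ov\N_{f(p)}].
\]
The required identity therefore reduces to
\[
(\wh f_p)_*[\ov\M_p]=[\ov\N_{f(p)}]\qquad\text{in}\qquad \cHH_d(\ov\N_{f(p)}),
\]
where $\wh f_p:\ov\M_p\to\ov\N_{f(p)}$ is the restriction of $\wh f$ to fibers; this restriction is automatically proper because $\ov\M_p$ is compact and $\ov\N_{f(p)}$ is Hausdorff.

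I would prove this pushforward identity by passing to the open manifold parts via the restriction isomorphism \eqref{YYFrseq}. Set $V=(\wh f_p)^{-1}(\N_{f(p)})$, which is open in $\ov\M_p$ and contains $\M_p$. The complement $V\setminus\M_p$ lies inside the thin boundary $\ov\M_p\setminus\M_p$, so it also satisfies the codimension condition \eqref{1.1}; the long exact sequence \eqref{1.LES} for $(V,V\setminus\M_p)$ then gives an isomorphism $\sHH_d(V)\cong\sHH_d(\M_p)$, and $\rho_V[\ov\M_p]$ must be the fundamental class $[V]$ of the oriented manifold $V$. Naturality of restriction yields
\[
\rho_{\N_{f(p)}}(\wh f_p)_*[\ov\M_p]=(\wh f_p|_V)_*[V],
\]
and the map $\wh f_p|_V:V\to\N_{f(p)}$ is proper (its preimages of compacts in $\N_{f(p)}$ are closed subsets of the compact space $\ov\M_p$) with the given degree-$1$ restriction on $\M_p$. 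Standard Steenrod degree theory for oriented manifolds then identifies $(\wh f_p|_V)_*[V]$ with $[\N_{f(p)}]$, completing the verification.

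The step I expect to be the main obstacle is the last one: converting the degree-$1$ hypothesis on the potentially non-proper map $\wh f_p:\M_p\to\N_{f(p)}$ into a rigorous identity of fundamental classes under pushforward. The device of enlarging $\M_p$ to the open set $V$ is introduced precisely to restore properness without altering the top-dimensional \Cech/Steenrod class, by the thinness of the complements; one must also verify that the thinness condition \eqref{1.1} truly descends from $\ov\M_p\setminus\M_p$ to $V\setminus\M_p$ (for instance by exploiting the fact, noted after \eqref{1.0}, that Lebesgue covering dimension is monotone and governs \eqref{1.1}).
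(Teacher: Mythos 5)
Your proof follows the paper's argument exactly: reduce to $p\in\P^*$ by the uniqueness statement of Proposition~\ref{extLemma}(a), evaluate both functors there via the normalization axiom, and invoke the degree-1 hypothesis \eqref{2.MpNp} to get $(\wh f_p)_*[\ov\M_p]=[\ov\N_{f(p)}]$, which is all the paper does. Your third paragraph supplies a justification of that last identity which the paper treats as immediate; it is essentially sound, with the one caveat that $V=(\wh f_p)^{-1}(\N_{f(p)})$ is a thin (generally non-manifold) completion of $\M_p$, so ``$[V]$'' should be read as the unique class restricting to $[\M_p]$ under the isomorphism $\sHH_d(V)\cong\sHH_d(\M_p)$ rather than as the fundamental class of an oriented manifold.
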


\begin{proof}
   If $p\in\P^*$ and $q=f(p)\in {\cal Q}^*$, then 
$$
\mu^\rfc(p) =[\ov\M_p] \qquad \mbox{and}\qquad (f^*\nu^\rfc)(p) = \nu^\rfc(q)=[\ov\N_q].
$$
But  $(\widehat f_p)_*[\ov\M_p]=[\ov\N_q]$  by assumption \eqref{2.MpNp}.   Thus   $f^*\nu^\rfc(p)    = \mu^\rfc(p)$ for a dense set of points $p$ in $\P$.   The lemma then follows by  Proposition~\ref{2.ExtensionLemma}(a).  
\end{proof}

The pullback property \eqref{2.pullback} is not true for general morphisms \eqref{2.QPdiagram};  the two  relative homology functors  may not even have the same dimension.  In particular,   if one restricts the space $\P$ of parameters to a submanifold of $\P$, the  relative fundamental classes need not correspond.   Examples of this phenomenon are given in Section~6 of  \cite{IPThin}.

 \vspace{13pt}

The constructions in this section required two homology theories.  We
first used the {\em exactness}  of Steenrod homology to extend  fundamental classes from manifolds to their thin compactifications; we  then passed to \Cech homology and used its {\em continuity} property to extend fundamental classes to all  fibers in  a family.   Unfortunately, on the category of compact pairs with  $\Z$ coefficients,  no homology theory is both exact and continuous \cite[\S X.4]{ES}.

  On the other hand, with coefficients in  $\Q$,  Steenrod and \Cech homology are naturally  isomorphic on the category $\AC$ of compact spaces and continuous maps, giving a single theory   that is both exact and continuous (and essentially unique -- see  \S A.2 of the appendix).  The constructions of this section then produce an RFC in   rational Steenrod homology.  This approach avoids \Cech homology, at the expense of losing track of whether   the invariants \eqref{1.invts} are integers.

\setcounter{equation}{0}
\section{Fredholm Families and  relative fundamental classes} 
\label{section2}
\medskip

 We now  define and focus attention on a class of families  \eqref{1.8diagram} where the structures of Section~1 arise naturally via the Sard-Smale theorem.   The motivating examples occur in   gauge theories,  where one has universal moduli spaces $\M$ which are Banach manifolds  and  have compactifications $\ov\M$ with natural maps to a manifold $\P$ of parameters.  The definitions and results of this section codify the relevant structure of such moduli spaces which is needed to ensure the existence of a    relative fundamental class.

\smallskip 

In this and later sections, the term ``Banach manifold'' means a metrizable separable  
Banach manifold, finite or infinite dimensional. Such manifolds are second countable and
paracompact  (metrizability is needed  to apply the  dimension theory in Appendix~B).  By a {\em Fredholm family} we mean a   Fredholm map
\bear
\label{2.1}
\xymatrix@R=6mm{
\M \ar[d]^{\pi}\\
\P
}
\eear
between $C^l$   Banach manifolds, finite or infinite dimensional, which we again regard as a family of spaces (the fibers of $\pi$) parameterized by  $\P$.    Such a map $\pi$  has an associated Fredholm index $d$, and we assume that 
\bear\label{2.lbound}
 l > \max(d+1,0).
\eear

We also assume that \eqref{2.1}  comes equipped with two additional structures:
\begin{itemize} \setlength\itemsep{4pt}
\item  A relative orientation specified by a nowhere zero section of the relative determinant bundle.  A relative orientation on $\M$ induces an orientation on each regular fiber $\M_p$.
\item    A {\em (metrizable) relative compactification}, meaning a metrizable  space  $\overline{\M}$ together with  a commutative diagram 
\bear
\label{1.5diagram}
\xymatrix{ 
\M\ar[r]^\phi  \ar[d]_\pi& 
\ov\M \ar[dl]^{\ov\pi}\\
\P
}
\eear
 where $\phi$ is an  inclusion of $\M$ as an open subset, and  $\ov\pi:\ov\M\to \P$ is continuous and proper.
\end{itemize}

\medskip
 
  We call the set $\SS = \ov\M\Setminus \M$ the {\em singular locus} of $\ov\M$,  so   $\ov{\M}$ is the disjoint union
$$
\overline\M \ =\  \M\ \cup\  \SS.
$$
The following definition  generalizes  and supersedes the notion of a ``Fredholm-stratified thin compactification'' defined in   \cite{IPThin}, and casts it in terms of three easily-verifiable conditions.  (Definition~5.2 in \cite{IPThin} is a special case in which each $\phi_\alpha$ is an inclusion and the images $\phi_\alpha(\Sa)$ are disjoint.)

\begin{defn}
\label{1.Def1.1}
Fix a relatively oriented Fredholm family  \eqref{2.1}  of index~$d$.   A  {\em  Fredholm thin compactification} of $\M$ is a metrizable relative  compactification  as in \eqref{1.5diagram}, together with   a countable set 
 $\A$ and, for each $\alpha\in\A$,   a diagram
\bear\label{1.MNP}
 \xymatrix{
\Sa \ar[r]^{\phi_\alpha} \ar[dr]_{\pi_\alpha}& \ov{\M} \ar[d]^{\ov\pi} \\
& \P   
}\eear
such that
\begin{enumerate}\setlength{\itemsep}{4pt}
\item $\pi_\alpha:\Sa\to \P$ is a  Fredholm family of index $d_\alpha \le d-2$.
\item Each $\phi_\alpha$ is either (i) continuous and locally injective, or (ii) locally Lipschitz.
\item  $\{\phi_\alpha(\Sa)\, |\, \alpha\in\A\}$ cover   $\SS$.
\end{enumerate}
\end{defn}
These conditions imply that the fibers  $\ov\M_p$ and the cobordisms $\ov\M_\gamma$ are compact metrizable spaces. 
 
\medskip

In this context,  the Sard-Smale theorem yields a crucial fact:

 \begin{lemma}
 \label{lemma1.2}
A  Fredholm thin compactification \eqref{1.MNP}  is a relatively thin family.
\end{lemma}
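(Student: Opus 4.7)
The plan is to verify conditions (a) and (b) of Definition~\ref{1.def.rtf} by combining the Sard--Smale theorem, applied to $\pi$ and to each $\pi_\alpha$, with the dimension estimates of Appendix~B. Since $\P$ is a Banach manifold it is automatically a locally path-connected, metrizable Baire space (being locally homeomorphic to open sets in a separable Banach space), so the ambient hypotheses of Definition~\ref{1.def.rtf} are immediate; only the existence of the second-category set $\P^*$ with the claimed properties requires proof.

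To construct $\P^*$, first apply the Sard--Smale theorem to the Fredholm map $\pi$ (which is $C^l$ with $l>\max(d+1,0)$) to get a residual set $\P_\pi^*\subseteq\P$ of regular values, and likewise a residual set $\P_\alpha^*\subseteq\P$ of regular values of each $\pi_\alpha$. The set $\A$ is countable, so
\[
\P^*\ =\ \P_\pi^*\,\cap\,\bigcap_{\alpha\in\A}\P_\alpha^*
\]
is a countable intersection of residual sets in the Baire space $\P$, hence residual, hence of second category. For $p\in\P^*$, the fiber $\M_p=\pi^{-1}(p)$ is a $C^l$ $d$-manifold with the orientation induced by the relative orientation of $\pi$, and $\ov\M_p=\ov\pi^{-1}(p)$ is a compact metrizable space because $\ov\pi$ is proper and $\ov\M$ is metrizable. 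The singular locus of this compactification is
\[
\SS\cap\ov\M_p\ =\ \bigcup_{\alpha\in\A}\phi_\alpha\bigl(\pi_\alpha^{-1}(p)\bigr),
\]
a countable union of images of manifolds of dimension $d_\alpha\le d-2$ under maps $\phi_\alpha$ that are either locally injective continuous or locally Lipschitz. Lemma~\ref{LemmaA2} therefore bounds the covering dimension of $\SS\cap\ov\M_p$ by $d-2$, and Lemma~\ref{LemmaB1} then gives $\sHH_k(\SS\cap\ov\M_p)=0$ for $k>d-2$. Thus $\ov\M_p$ is a thin compactification of $\M_p$, which is condition (a).

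For condition (b), fix $p,q\in\P^*$ and an arbitrary continuous path $\gamma_0$ in $\P$ from $p$ to $q$. Approximate $\gamma_0$ in the $C^0$ topology by $C^l$ paths with the same endpoints (smoothing in local charts), and parametrize a neighborhood of any such smooth path by a separable Banach manifold of $C^l$ perturbations vanishing at the endpoints. The parametric Sard--Smale theorem applied to the resulting evaluation maps (for $\pi$ and for each $\pi_\alpha$, and using countability of $\A$) then yields a $C^0$-dense collection of $C^l$ paths $\gamma$ from $p$ to $q$ that are simultaneously transverse to $\pi$ and to every $\pi_\alpha$. For any such $\gamma$ the pulled-back total space $\M_\gamma=\pi^{-1}(\gamma([0,1]))$ is a $(d+1)$-dimensional oriented cobordism from $\M_p$ to $\M_q$, and $\pi_\alpha^{-1}(\gamma([0,1]))$ is a manifold of dimension at most $d-1$; repeating the covering-dimension argument with $\SS\cap\ov\M_\gamma=\bigcup_\alpha\phi_\alpha(\pi_\alpha^{-1}(\gamma([0,1])))$ gives $\dim(\SS\cap\ov\M_\gamma)\le d-1$ and hence $\sHH_k(\SS\cap\ov\M_\gamma)=0$ for $k\ge d$. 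Combined with the manifold-with-boundary structure of $\M_\gamma$, this is exactly the definition of a thin compactified cobordism, verifying (b).

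The step I expect to be the main obstacle is the $C^0$-density of transverse paths: one must exhibit a parametrizing Banach manifold of perturbations that is rich enough for the relevant evaluation maps to be submersions (so Sard--Smale applies to all of $\pi$ and the $\pi_\alpha$ at once) yet produces only $C^0$-small changes near an arbitrary continuous reference path. The standard device of perturbations supported in the interior of sub-intervals of a fine partition, with values in a dense countable collection of tangent directions in charts on $\P$, suffices; the remaining dimension bookkeeping and orientation statements are routine.
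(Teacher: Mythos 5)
Your proposal is correct and follows essentially the same route as the paper: Sard--Smale applied to $\pi$ and to each $\pi_\alpha$, a countable intersection of residual sets in the Baire space $\P$, and the covering-dimension lemmas of Appendix~B applied to the covered singular loci of both the fibers and the path cobordisms. The only caveat is notational: $\M_\gamma$ should be the fiber product $\{(t,x)\,:\,\gamma(t)=\pi(x)\}$ as in \eqref{1.M_K}, not $\pi^{-1}(\gamma([0,1]))$ (these differ for non-injective paths), but this does not affect the argument.
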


\begin{proof}  Because $\P$ is a separable metrizable Banach manifold, it is locally path-connected,  and is a Baire space.  It remains to verify conditions (a) and (b) of Definition~\ref{1.def.rtf}.

(a)\ The Sard-Smale theorem, applied to $\pi$ and to each $\pi_\al$, $\alpha\in \A$, 
shows that there are  second category subsets $\P_0$ and $\P_\alpha$ of $\P$ such that (i) the fiber $\M_p$ over each $p\in\P_0$ is a  $C^1$ manifold  of dimension  $d$ (with orientation induced from  the relative orientation),  and (ii) 
 the fiber $\Sap$  of $\pi_\al$ over each $p\in\P_\alpha$ is a  $C^1$ manifold of dimension  $d_\alpha\le d-2$. Then
 \bear\label{2.regularvalues}
 \P^*= \P_0\cap \bigcap \P_\alpha
 \eear
  is also a second category subset of $\P$.   We  call elements of $\P^*$ the {\em regular values of $\ov\pi$}.

     For each $p\in\P^*$,   $\SS_p=   \ov\M_p\Setminus\M_p$ is a closed, hence compact, subset of $\ov\M_p$.  By Definition~\ref{1.Def1.1}, $\SS_p$ is covered by the sets $\phi_\al(\Sap)$.      Each  $\SS_{\alpha, p}$  is a  finite-dimensional submanifold of the (second countable metrizable) Banach manifold $\SS_\alpha$, so is $\sigma$-compact.    Assumption~(2) and Lemma~\ref{LemmaA2}  then show that
$$
\sHH_k(\SS_p)=0 \ \quad \forall\, k> d-2. 
 $$
Thus $\ov{\M}_p$ is a metrizable thin compactification of $\M_p$.

  (b)  Now fix  $p, q\in \P^*$ and $l$ satisfying \eqref{2.lbound}.  Because any continuous path is the $C^0$ limit of $C^l$ paths,  it suffices to show that  (b) holds for  a dense subset of the space $\P^l(p,q)$ of $C^l$ paths $\gamma:[0,1]\to \P$  from $p$ to $q$. Furthermore,  noting that $p$ and $q$ are regular values of $\pi$,  the Sard-Smale theorem shows that
 the set of $\gamma\in \P^l(p,q)$ that are transverse to $\pi$ is open and dense, and the same is true with $\pi$ replaced by  $\pi_\alpha$ for each $\alpha$. The intersection of these sets is a dense subset of 
  paths $\gamma\in\P^l(p,q)$ for which (i) the fiber product  \eqref{1.M_K}  is a $(d+1)$-dimensional oriented manifold 
  \best
\M_\gamma
\eest 
 whose boundary is canonically identified with $\M_p\sqcup\M_q$, and (ii)  each fiber product $({\cal S}_{\alpha})_\gamma$  is a  manifold of dimension $d_\al+1\le d-1$ with boundary 
${\cal S}_{\alpha, {\bd \gamma}}={\cal S}_{\alpha, p}\sqcup  {\cal S}_{\alpha, q}$ and with maps
\bear\label{2.converS}
 \mathrm{id}\ti \phi_\al : ({\cal S}_{\alpha})_\gamma \to\ov\M_\gamma
\eear
 also satisfying property (2) of Definition~\ref{1.Def1.1}. By Definition~\ref{1.Def1.1}(3)  and the commutativity of  diagram~\eqref{1.MNP}, the images  of the maps \eqref{2.converS} cover the  singular locus of $\ov\M_\gamma$ and the images of their restrictions to ${\cal S}_{\alpha, {\bd \gamma}}$  cover the singular locus of the boundary  $\ov\M_{\bd\gamma}= \ov\M_p \sqcup\ov\M_q$. We can then similarly apply Lemma~\ref{LemmaA2} to conclude that the singular locus of $\ov\M_\gamma$ satisfies correct 
properties to be a thinly compactified cobordism from  $\ov\M_p$ to $\ov\M_q$. 
 \end{proof} 

\bigskip

 Combining Lemma~\ref{lemma1.2} and Proposition~\ref{2.ExtensionLemma} leads to our first main theorem.

 \medskip

\begin{theorem}
\label{theorem1.2}
 A  Fredholm  thin compactification  $\ov\pi:\ov{\M}\to \P$ admits  a unique relative fundamental class.
 \end{theorem}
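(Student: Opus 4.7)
The plan is to reduce the theorem to the Extension/Uniqueness result in Proposition~\ref{extLemma} by combining Lemma~\ref{lemma1.2} (which converts the Fredholm hypotheses into the structure of a relatively thin family) with the Steenrod cobordism identity~\eqref{1.3} on generic paths.

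\textbf{Uniqueness.} Suppose $\mu$ and $\mu'$ are two RFCs. The normalization axiom A2 forces $\mu(p)=[\ov\M_p]=\mu'(p)$ for every $p$ in the set $\P^*$ of regular values produced in the proof of Lemma~\ref{lemma1.2}. Since $\P$ is a Baire space and $\P^*$ is a second category subset, $\P^*$ is dense in $\P$. Proposition~\ref{extLemma}(a) then gives $\mu=\mu'$.

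\textbf{Existence.} By Lemma~\ref{lemma1.2}, $\ov\pi:\ov\M\to\P$ is a relatively thin family; let $\P^*\subseteq \P$ be the associated second category set. For each $p\in\P^*$, the fiber $\ov\M_p$ is a thin compactification of the oriented $d$-manifold $\M_p$, so it carries a Steenrod fundamental class as in~\eqref{1.fc}; applying the natural transformation~\eqref{1.gamma} gives a \v Cech class
\[
\mu(p) \;=\; [\ov\M_p]\in \cHH_d(\ov\M_p).
\]
To apply Proposition~\ref{extLemma}(b), it remains to verify the path consistency condition~\eqref{2.consistent} for a $C^0$-dense set of paths between any two points $p,q\in\P^*$. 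By Definition~\ref{1.def.rtf}(b), which is supplied by part (b) of the proof of Lemma~\ref{lemma1.2}, there is a $C^0$-dense set of paths $\gamma:[0,1]\to\P$ from $p$ to $q$ for which $\ov\M_\gamma$ is a thin compactified cobordism from $\ov\M_p$ to $\ov\M_q$. For such $\gamma$, the cobordism identity~\eqref{1.3} gives
\[
(\iota_0)_*[\ov\M_p] \;=\; (\iota_1)_*[\ov\M_q] \qquad \text{in }\sHH_d(\ov\M_\gamma),
\]
and the naturality of the transformation~\eqref{1.gamma} promotes this equality to \v Cech homology, yielding~\eqref{2.consistent}. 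Proposition~\ref{extLemma}(b) now extends $p\mapsto\mu(p)$ uniquely to a relative homology functor $\mu$ of degree $d$ on $\KP$, which by construction satisfies the normalization A2 and therefore is an RFC.

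\textbf{Main obstacle.} The substantive content is already packaged in Lemma~\ref{lemma1.2} (which invokes the Sard--Smale theorem and the dimension-theoretic Lemma~\ref{LemmaA2} to produce regular values and generic thin cobordisms) and in the cobordism relation~\eqref{1.3}; given these, the theorem is essentially an assembly argument. The only point to handle carefully is the transfer from Steenrod to \v Cech homology via~\eqref{1.gamma}, so that~\eqref{1.3} produces the \v Cech equality needed to invoke Proposition~\ref{extLemma}(b).
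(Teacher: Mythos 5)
Your proposal is correct and follows essentially the same route as the paper: apply Lemma~\ref{lemma1.2} plus the Baire category theorem to get a dense set $\P^*$ of regular values, define $\mu(p)=[\ov\M_p]$ there, verify the consistency condition \eqref{2.consistent} via the cobordism identity \eqref{1.3} on the generic paths supplied by Lemma~\ref{lemma1.2}, and invoke Proposition~\ref{extLemma}. The only cosmetic difference is that you separate uniqueness into its own step via Proposition~\ref{extLemma}(a), whereas the paper absorbs it into the uniqueness clause of part (b); the Steenrod-to-\v{C}ech transfer you flag is already handled by the remark after \eqref{1.Cechfc}.
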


\begin{proof}
First apply Lemma~\ref{lemma1.2}, noting that the set $\P^*$ in \eqref{2.regularvalues} of regular values of $\ov\pi$ is dense in $\P$ by the Baire Category theorem.  Then, as noted after Definition~\ref{1.def.rtf}, $\ov\M_p$ has a \Cech fundamental class $[\ov\M_p]$ for each $p\in\P^*$.  Define an assignment \eqref{lemma3.4eq1} by setting 
\bear\label{2.last}
\mu(p) =  [\ov\M_p]\in\cHH_d(\ov\M_p)
\eear
for $p\in\P^*$.  For any $q\in\P^*$,  Lemma~\ref{lemma1.2} also  shows that there is a dense set of paths $\gamma$ from $p$ to $q$, each with an associated thin cobordism $\ov\M_\gamma$.   Applying \eqref{1.3} to this cobordism shows that  the consistency condition  \eqref{2.consistent} for these paths $\gamma$.

Proposition~\ref{2.ExtensionLemma} now applies ($\P$ is a separable Banach manifold, so is locally connected and metrizable).  Thus the assignment $p\mapsto \mu(p)$ extends uniquely to a relative homology functor that satisfies  \eqref{2.last}.
 \end{proof}
 
 \medskip

\begin{rem}\label{ex2.4}
  Suppose that a family as  in Theorem~\ref{theorem1.2}  contains a {\em complex algebraic subfamily}, meaning that there is a diagram \eqref{1.subfamily} where  $\sigma:{\cal Q}\to \P$ is an inclusion and $\ov\pi_\sigma$ is a proper complex algebraic map between varieties.  As in \S 1.4,  the RFC on $\ov\M$ pulls back to a relative homology functor on $\ov\M_\sigma$, so for 
  each proper algebraic map $\phi:K\to {\cal Q}$, one obtains a \Cech homology class 
  \bear\label{2.Algebraic}
  [\ov\M_{\sigma\circ \phi}]^{\rfc}.
  \eear
 Because $\ov\M_{\sigma\circ \phi}$ is compact and locally contractible, its Steenrod, \Cech, and  Borel-Moore homologies are isomorphic with $\Z$ coefficients (cf. \S A.3).  Thus \eqref{2.Algebraic} can be regarded as a class in  Borel-Moore homology, which is more commonly used  by algebraic geometers. 
    
Note that the class \eqref{2.Algebraic}   is defined even if $\ov\M_{\sigma\circ \phi}$ has no regular fibers, and it   has the naturality property \eqref{1.A1} with respect to proper algebraic base changes with path-connected base. However, its   existence and uniqueness depend on the existence of a non-algebraic object:  the Fredholm thin compactification of Theorem~\ref{theorem1.2}.
 \end{rem}

\setcounter{equation}{0}
\section{Proper maps and pseudo-cycles } 
\label{section3}
\medskip

In  geometric topology,  intersection invariants can be defined using  pseudo-cycles.
This section gives a general definition of pseudo-cycle, relates it to the   definition   used in  \cite{ms2}, and describes how pseudo-cycle  classes can be realized by pushing forward the fundamental class of a thinly compactified manifold.  As in \cite{IPThin} and \cite{sw}, we  work  in Steenrod homology with coefficients in $\Z$ or $\Q$,  although the results and proofs hold equally well in Borel-Moore homology (see  Appendix~\ref{sectionA} and Lemma~\ref{LemmaB1}). The  connection with intersection numbers  is made in the  next section.

\subsection{Proper maps.}   In Steenrod and Borel-Moore homology, fundamental classes push forward only under {\em proper} continuous maps.  Thus it is helpful to describe two ways (Lemmas~\ref{Lemma3.1} and \ref{Lemma4.1B} )  that a continuous map
\bear\label{3.0}
 f:M\to Z
\eear
 between Hausdorff spaces can be modified to produce a proper map.    Recall that the {\em Omega limit set} of $f$ is defined  to be 
$$
\Omega_f\ =\  \bigcap_{K\subseteq M} \ \overline{f(M\Setminus K)},
$$
where the bar denotes closure and the intersection is over all compact sets $K\subseteq M$.

 \begin{lemma}\label{Lemma3.1} 
For any subset $A$ of  $Z\Setminus \Omega_f$, the  restriction of $f$ to $M_A=f^{-1}(A)$  is a proper map
\bear\label{4.MU}
f_A: M_A\to A.
\eear
 \end{lemma}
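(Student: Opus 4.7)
The plan is to show directly that for every compact $K\subseteq A$, the preimage $f_A^{-1}(K)$ is compact. I first observe the set-theoretic simplification: since $K\subseteq A$, any $x$ with $f(x)\in K\subseteq A$ automatically lies in $M_A=f^{-1}(A)$, so
\[
f_A^{-1}(K)\ =\ f^{-1}(K).
\]
Thus it suffices to show that $f^{-1}(K)$ is compact as a subspace of $M$ (compactness is intrinsic, so this gives compactness in $M_A$ as well).

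The key step uses the hypothesis $K\cap\Omega_f=\emptyset$. For each $z\in K$, since $z\notin \Omega_f=\bigcap_K\overline{f(M\Setminus K)}$, there exists a compact $K_z\subseteq M$ with $z\notin\overline{f(M\Setminus K_z)}$. Hence there is an open neighborhood $U_z\ni z$ in $Z$ that is disjoint from $f(M\Setminus K_z)$, which translates to the inclusion
\[
f^{-1}(U_z)\ \subseteq\ K_z.
\]
Now I invoke compactness of $K$ to extract a finite subcover $U_{z_1},\dots,U_{z_n}$ of $K$. Then
\[
f^{-1}(K)\ \subseteq\ f^{-1}\!\left(\bigcup_{i=1}^n U_{z_i}\right)\ \subseteq\ \bigcup_{i=1}^n K_{z_i},
\]
and the right-hand side is compact in $M$.

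To conclude, I note that $K$ is closed in $Z$ (as a compact subset of the Hausdorff space $Z$), so $f^{-1}(K)$ is closed in $M$ by continuity of $f$. A closed subset of a compact Hausdorff set is compact, so $f^{-1}(K)$ is compact, finishing the proof. There is no real obstacle here; the only mild subtlety is unpacking the definition of $\Omega_f$ as an intersection of closures to produce, for each $z\in K$, a single open neighborhood whose preimage is contained in one compact set $K_z$, and then using ordinary compactness of $K$ to finitize.
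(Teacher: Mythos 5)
Your proof is correct and rests on the same two ingredients as the paper's: the definition of $\Omega_f$ as an intersection over compact subsets of $M$, and the compactness of the given compact subset of $A$, used to show that its preimage is a closed subset of $M$ contained in a compact subset of $M$. The only difference is one of formulation — you argue directly by extracting a finite subcover of $K$ from the neighborhoods $U_z$ with $f^{-1}(U_z)\subseteq K_z$, whereas the paper argues by contradiction via the finite intersection property of the compact sets $C\cap\overline{f(M\setminus K)}$ — so nothing further is needed.
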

\begin{proof}  Fix a compact set $C\subseteq A$. We must show that the closed set $f_A^{-1}(C)$ is compact.  This is true if 
 $f_A^{-1}(C)$ lies in some compact $K\subseteq M$. Otherwise, for each compact $K\subseteq M$,  the set $B_K=C\cap  \overline{f(M\Setminus K)}$ is a non-empty, closed -- hence compact --  subset of $C$. But then  $\bigcap_K B_K =  C\cap \Omega_f$ is non-empty, contradicting the fact that $C\cap \Omega_f \subseteq A\cap \Omega_f=\emptyset$.   
 \end{proof}

\medskip
 
 Instead of restricting $f$, we can extend it.  
 
 \begin{lemma}
\label{Lemma4.1B}
Suppose that  $M$ is a subset of a compact space $\ov{M}$ with closed complement $S=\oM\Setminus M$.  If  $\ov{f}:\ov{M}\to Z$ is  a continuous extension of \eqref{3.0}, then $\ov{f}$ is proper and  $\Omega_f\subseteq \ov{f}(S)$.
\end{lemma}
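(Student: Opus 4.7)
The properness of $\ov f$ is immediate: since all spaces in the paper are Hausdorff, any compact $C\subseteq Z$ is closed, so $\ov f^{-1}(C)$ is a closed subset of the compact space $\ov M$ and is therefore compact. This handles the first assertion in one line, and the substance of the lemma is the inclusion $\Omega_f\subseteq \ov f(S)$.

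For the latter, the plan is a finite-intersection-property argument inside the compact space $\ov M$. Fix $z\in\Omega_f$ and, for each compact $K\subseteq M$, introduce
\[
C_K \;=\; \ov f^{-1}(z)\,\cap\, \overline{\ov M\setminus K}\;\subseteq\;\ov M,
\]
which is closed in $\ov M$. Non-emptiness of $C_K$ follows from the observation that $\ov f\bigl(\overline{\ov M\setminus K}\bigr)$ is compact (being a continuous image of a closed subset of $\ov M$), hence closed in $Z$, and contains $f(M\setminus K)$; it therefore contains $\overline{f(M\setminus K)}\ni z$, and any $\ov f$-preimage of $z$ inside $\overline{\ov M\setminus K}$ witnesses $C_K\neq\emptyset$. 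The finite intersection property is immediate because for compact $K_1,\dots,K_n\subseteq M$ the union $K=K_1\cup\cdots\cup K_n$ is again compact in $M$ and $C_K\subseteq C_{K_1}\cap\cdots\cap C_{K_n}$. Compactness of $\ov M$ then yields a point $s\in\bigcap_K C_K$, which already satisfies $\ov f(s)=z$.

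The one step requiring care, and the place I expect the main obstacle, is arguing that the resulting $s$ lies in $S$ rather than in $M$: the naive choice $K=\{s\}$ fails when $s$ is not an isolated point, since then $s$ is itself a limit of $\ov M\setminus\{s\}$. The plan is to exploit the fact that $M$, being open in the compact Hausdorff space $\ov M$ (its complement $S$ is closed by hypothesis), is automatically locally compact Hausdorff. So if $s$ were in $M$, it would admit a compact neighborhood $K_0\subseteq M$ whose $\ov M$-interior $W$ contains $s$; then $W$ would be an open neighborhood of $s$ disjoint from $\ov M\setminus K_0$, contradicting $s\in\overline{\ov M\setminus K_0}\subseteq C_{K_0}$. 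Hence $s\in S$ and $z=\ov f(s)\in\ov f(S)$, completing the proof. A net-theoretic variant (pick $x_{V,K}\in f^{-1}(V)\setminus K$ indexed by neighborhoods $V$ of $z$ and compacts $K\subseteq M$, and extract a convergent subnet in $\ov M$) gives the same conclusion, but the FIP formulation avoids bookkeeping.
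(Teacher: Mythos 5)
Your proof is correct, but it takes a genuinely different route from the paper's. The paper argues by contradiction entirely in the target: given $y\in\Omega_f\setminus\ov f(S)$, it separates $y$ from the compact set $\ov f(S)$ by disjoint open sets $U\ni y$ and $V\supseteq \ov f(S)$, observes that $K=\ov M\setminus\ov f^{-1}(V)$ is a compact subset of $M$ with $f(M\setminus K)\subseteq V$, and concludes $y\notin\overline{f(M\setminus K)}$, contradicting $y\in\Omega_f$. You instead work in the source: a finite-intersection-property argument in the compact space $\ov M$ produces an actual point $s$ with $\ov f(s)=z$ lying in every $\overline{\ov M\setminus K}$, and local compactness of the open subset $M\subseteq\ov M$ rules out $s\in M$. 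Your version is slightly longer but constructive in the sense that it exhibits a preimage of each $z\in\Omega_f$ inside $S$; the paper's is shorter and leans on separating a point from a disjoint compact set in the Hausdorff target, whereas yours only needs that compact subsets of $Z$ are closed (plus regularity of the compact Hausdorff space $\ov M$). Both arguments rest on the paper's standing Hausdorff convention, and all the steps you flag as delicate (non-emptiness of $C_K$, the choice of a compact neighborhood $K_0\subseteq M$ with $s$ in its $\ov M$-interior) go through as you describe.
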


\begin{proof}
The first conclusion is evident because a continuous map from a compact space to a Hausdorff space is proper.  For the second,   suppose
 by contradiction that there is  a point $y\in\Omega_f$ that is not in $\ov{f}(S)$.  The hypothesis implies that $S$ is compact and hence so is  $\ov{f}(S)$.  Hence we can find disjoint open neighborhoods $U$ of $y$ and $V$ of $\ov{f}(S)$. Then $K=\oM\Setminus \ov{f}^{\; -1}(V)$ is a compact  subset of $M$,  and $f(M\Setminus K) \subseteq \ov{f}(\oM\Setminus K) \subseteq V$.    But this implies that $y\notin \ov{f(M\Setminus K)}$, so $y\notin\Omega_f$, giving a contradiction.  
 \end{proof}

\subsection{Pseudo-cycles.}   A $d$-dimensional  {\em pseudo-cycle}  is a continuous map 
\bear\label{3.3}
 f:M\to Z
\eear
from an oriented  $d$-dimensional topological manifold $M$ to a  locally compact metric space $Z$  
such that $f(M)$  has compact closure and 
\bear\label{pseudocyclecondition}
\dim \Omega_f \ \le\ d-2,
\eear
where $\dim$ denotes the  Lebesgue covering dimension. These conditions imply that $\Omega_f$ is compact and hence, by Lemma~\ref{LemmaB1},
\bear\label{3.om.s}
 \sHH_k(\Omega_f; \Z)= 0\qquad \mbox{ for all $k\ge d-1$.}
 \eear
 This notion of pseudo-cycle generalizes the one used in \cite{ms2}, cf. Lemma~\ref{Lemma4.3A} below.  Note that if $f$ is proper, then $\Omega_f=\emptyset$, hence $f$ is a pseudo-cycle.

  Two such pseudo-cycles $f_i:M_i \ra Z$, $i=1,2$, are called {\em cobordant} if there exists a $(d+1)$-dimensional oriented manifold $W$ with boundary $\bd W= M_2\sqcup (-M_1)$ and a map $F:W \ra Y$ such that 
\bear\label{3.cob}
F|_{M_1}= f_1, \quad F|_{M_2}= f_2, \quad \dim \Omega_F\ \le\ d-1,  \mbox{ and $\ov F(W)$ is compact.}
\eear
\medskip

\smallskip

The following result is due to  M.~Schwarz \cite[Theorem~3.1]{sw}. 
\begin{lemma}\label{pseudocycleclass}
A pseudo-cycle \eqref{3.3}  determines a Steenrod class 
\bear\label{4.psClass}
[f] \in  \sHH_d(X; \Z),
\eear
where $X=\ov{f(M)}\subseteq Z$. It is defined by formula \eqref{6.pcclass} below.  
\end{lemma}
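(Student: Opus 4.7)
The plan is to define the class $[f]$ by combining the pushforward of the fundamental class of the part of $M$ that maps properly into $X$, with the long exact sequence of Steenrod homology.

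First I would check that $\Omega_f$ is a closed subset of the compact set $X = \overline{f(M)}$. This is immediate: each $\overline{f(M\setminus K)}$ is closed in $Z$, so their intersection $\Omega_f$ is closed in $Z$, hence closed in $X$. Moreover $\Omega_f \subseteq X$ because every point of $\Omega_f$ lies in $\overline{f(M)}$ (take $K=\emptyset$).

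Next I would set $A = X \setminus \Omega_f$, an open subset of $X$, and $M_A = f^{-1}(A)$, an open subset of $M$. By Lemma~\ref{Lemma3.1}, the restriction
\[
f_A : M_A \longrightarrow A
\]
is proper. Since $M_A$ is open in the oriented $d$-manifold $M$, it inherits an orientation and a Steenrod fundamental class $[M_A] \in \sHH_d(M_A;\Z)$ (cf.\ \eqref{5.2MN}). Because $f_A$ is proper and $A$ is locally compact Hausdorff, the pushforward
\[
(f_A)_*[M_A] \in \sHH_d(A;\Z)
\]
is defined in Steenrod theory.

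I would then apply the long exact sequence \eqref{1.LES} for the closed pair $\Omega_f \hookrightarrow X$:
\[
\sHH_d(\Omega_f) \longrightarrow \sHH_d(X) \overset{\rho}{\longrightarrow} \sHH_d(X \setminus \Omega_f) \overset{\partial}{\longrightarrow} \sHH_{d-1}(\Omega_f).
\]
The vanishing \eqref{3.om.s} says $\sHH_k(\Omega_f;\Z) = 0$ for $k \ge d-1$, so the outer two groups are zero and $\rho$ is an isomorphism. I would then \emph{define}
\[
[f] \;=\; \rho^{-1}\!\bigl((f_A)_*[M_A]\bigr) \;\in\; \sHH_d(X;\Z),
\]
which is the formula to be labeled \eqref{6.pcclass}. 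This class is well defined because $\rho$ is an isomorphism, and it depends only on $f$ and the orientation of $M$.

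The main obstacle is essentially bookkeeping rather than substance: one has to verify that $\Omega_f$ really does sit inside $X$ as a closed subspace (so that the long exact sequence applies with $X\setminus \Omega_f = A$), and that pushforward of Steenrod fundamental classes along proper maps between locally compact Hausdorff spaces is available --- both of which are granted by the setup of Section~1 and Appendix~\ref{sectionA}. Once $\rho$ is known to be an isomorphism via \eqref{3.om.s}, the construction and uniqueness of $[f]$ are automatic.
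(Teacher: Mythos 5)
Your construction is correct and is essentially identical to the paper's: both restrict $f$ over the complement of $\Omega_f$ to get a proper map (via Lemma~\ref{Lemma3.1}), push forward the fundamental class of the resulting open submanifold of $M$, and invert the restriction isomorphism $\rho:\sHH_d(X)\to\sHH_d(X\Setminus\Omega_f)$ furnished by the exact sequence \eqref{1.LES} and the vanishing \eqref{3.om.s}. (Note that your $M_A=f^{-1}(X\Setminus\Omega_f)$ coincides with the paper's $M^o=f^{-1}(Z\Setminus\Omega_f)$ since $f(M)\subseteq X$.)
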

\begin{proof} 
 By Lemma~\ref{Lemma3.1}, $f$ restricts to a proper map $f^o:M^o \to  X\Setminus \Omega_f$ whose domain
$$
M^o= f^{-1}(Z\Setminus \Omega_f)
$$
 is an open subset of the oriented $d$-manifold $M$.  Hence $M^o$ has a  fundamental class which satisfies
$$
[M^o]=\rho_o[M],
$$
(see \eqref{5.2MN}), where $\rho_o=\rho_{M^o}$.

The long exact sequence \eqref{1.LES}, together with \eqref{3.om.s}, shows that  the restriction to   $U=Z\Setminus \Omega_f$ induces an isomorphism 
\bear\label{6.YYFrseq}
\begin{tikzcd}
0 \arrow{r} & \sHH_d(X)  \ar{r}{\rho}[swap]{\cong}&   \sHH_d(X\Setminus \Omega_f)  \arrow{r} & 0.
\end{tikzcd}
\eear
Thus  the image of $[M]$  under the composition 
\bear\label{M->f}
\begin{tikzcd}
\sHH_d(M)  \arrow{r}{\rho_o}& \sHH_d(M^o)   \ar{r} {f^o_*} &\sHH_d(X\Setminus \Omega_f) 
 \ar{r}{\rho^{-1}}[swap]{\cong} & \sHH_d(X). 
\end{tikzcd}
\eear
determines a class 
  \bear\label{6.pcclass}
[f] = \rho^{-1}f^o_*[M^o] \ =\ \rho^{-1}f^o_*\rho_o[M] \  \in \  \sHH_d( X; \Z). 
\eear
\end{proof}

If $f$ is a  proper map then its pseudo-cycle class \eqref{4.psClass} is simply
$$
[f]= f_*[M].
$$
 More generally,  if $f$ is a   pseudo-cycle  and $U$ is an open subset  of  $Z\Setminus \Omega_f$, 
  $f_U:M_U\ra U$  is proper by Lemma~\ref{Lemma3.1}, and 
 \bear\label{Lemma3.4eq}
\rho_U[f]=[f_U]\quad \mbox{ in }\sHH_d(U).
\eear
This  follows from the commutative diagram obtained by restricting   \eqref{M->f} over $U$:
\bear\label{M->f.U}
\begin{tikzcd}
\sHH_d(M)\arrow{d}{\rho_{M_U}}  \arrow{r}{\rho_o}& \sHH_d(M^o) \arrow{d}{\rho_{M_U}}   \ar{r} {f^o_*} &\sHH_d(X\Setminus \Omega_f) 
 \ar{r}{\rho^{-1}}[swap]{\cong} \arrow{d}{\rho_U} & \sHH_d(X)\arrow{d}{\rho_U} 
 \\
 \sHH_d(M_U)  \arrow{r}{\rho_o=id}[swap]{=} & \sHH_d(M_U)   \ar{r} {f_{U*}} &\sHH_d(U) 
 \ar{r}{\rho^{-1}=id }[swap]{=} & \sHH_d(U). 
\end{tikzcd}
\eear

\medskip

\begin{lemma}\label{L.indep.cob}If $F:W \ra Z$ is a cobordism between  $d$-dimensional pseudo-cycles $f_1$ and $f_2$, then 
\bear\label{f0=f1}
[f_1]=[f_2]\quad \mbox{ in }\sHH_d(Y),
\eear
 where $Y= \ov {F(W)}$ is the closure of the image of the cobordism. 
\end{lemma}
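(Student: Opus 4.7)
The plan is to push everything into the open set $U := Y \Setminus \Omega_F$, where the cobordism map becomes proper. First, since $\dim \Omega_F \le d-1$, Lemma~\ref{LemmaB1} gives $\sHH_d(\Omega_F) = 0$, and the long exact sequence \eqref{1.LES} for the closed subset $\Omega_F \subseteq Y$ shows that the restriction $\rho_U : \sHH_d(Y) \to \sHH_d(U)$ is injective. It therefore suffices to prove $\rho_U[f_1] = \rho_U[f_2]$ in $\sHH_d(U)$.

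Next, I would identify each $\rho_U[f_i]$ concretely. Since any sequence in $M_i$ that escapes every compact subset of $M_i$ also escapes every compact subset of $W$, one has $\Omega_{f_i} \subseteq \Omega_F$, so $U \cap X_i$ is open in $X_i \Setminus \Omega_{f_i}$. Applying \eqref{Lemma3.4eq} to $f_i$ on this open set, combined with the naturality of restriction under the inclusion $X_i \hookrightarrow Y$, will give
$$\rho_U[f_i] = (f_i|_{N_i})_*[N_i] \in \sHH_d(U),$$
where $N_i := f_i^{-1}(U)$ and $f_i|_{N_i} : N_i \to U$ is proper by Lemma~\ref{Lemma3.1}.

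Finally, I would use the cobordism. Set $W^o := F^{-1}(U)$; by Lemma~\ref{Lemma3.1}, the restriction $F^o := F|_{W^o} : W^o \to U$ is proper, and $W^o$ is an open subset of $W$ meeting the boundary in $N_1 \sqcup N_2$, so it inherits the structure of an oriented $(d+1)$-manifold with boundary $\partial W^o = N_1 \sqcup N_2$. The Steenrod relative fundamental class $[W^o, \partial W^o] \in \sHH_{d+1}(W^o, \partial W^o)$ satisfies $\partial[W^o, \partial W^o] = [N_2] - [N_1]$, using the orientation convention $\partial W = M_2 \sqcup (-M_1)$ from \eqref{3.cob}. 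Naturality of the boundary map applied to the proper map of pairs $F^o : (W^o, \partial W^o) \to (U, V)$, with $V := F^o(\partial W^o)$, then shows that $(f_2|_{N_2})_*[N_2] - (f_1|_{N_1})_*[N_1]$ lies in the image of $\partial : \sHH_{d+1}(U, V) \to \sHH_d(V)$, and so vanishes after inclusion into $\sHH_d(U)$ by exactness of \eqref{1.LES} for $(U, V)$. Combined with the previous step, this yields $\rho_U[f_1] = \rho_U[f_2]$, as required.

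The main technical point will be justifying the relative Steenrod fundamental class of the noncompact oriented manifold-with-boundary $W^o$ and its boundary formula, since the paper explicitly treats only the boundaryless case. A clean route is either to use a collar neighborhood of $\partial W$ in $W$ to reduce to the open-manifold setting already handled in Section~1, or to carry out the whole argument in Borel-Moore homology, where manifold-with-boundary fundamental classes are standard, and then transport back via the agreement of the two theories on the spaces at hand (Lemma~\ref{LemmaB1} and Appendix~A).
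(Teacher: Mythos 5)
Your proposal is correct and follows essentially the same route as the paper: restrict to $Y^o = Y\Setminus\Omega_F$, where $\sHH_d(\Omega_F)=0$ makes $\rho_{Y^o}$ injective and $F$ becomes proper, and then kill $\rho_{Y^o}([f_2]-[f_1])$ using the boundary formula for the oriented manifold-with-boundary $W^o=F^{-1}(Y^o)$ with $\partial W^o = M_2^o\sqcup(-M_1^o)$. The one technical point you flag --- the identity $\partial[W^o\Setminus\partial W^o]=[\partial W^o]$ in Steenrod homology for a noncompact oriented manifold with boundary, equivalent to your relative-class formulation via \eqref{def.rho.cohom} --- is exactly what the paper imports from Lemma~2.10 of \cite{IPThin}, so your collar/Borel--Moore workaround is only needed if one insists on avoiding that reference.
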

\begin{proof} As in \eqref{3.om.s}, the assumptions \eqref{3.cob} imply that $\sHH_d(\Omega_F) =0$. Then \eqref{1.LES} shows that the restriction to the open subset $Y^o=Y\Setminus \Omega_F$ of $Y$ is an  injection:
$$   
\begin{tikzcd}
0 =\sHH_d(\Omega_F)\arrow{r} & \sHH_d(Y)  \ar{r}{\rho_{Y^o}}&   \sHH_d(Y\Setminus \Omega_F). 
\end{tikzcd}
$$
Therefore it suffices to prove the equality of the restrictions of \eqref{f0=f1} to $Y^o$. 

By Lemma~\ref{Lemma3.1},  the restriction $F^o:W^o\ra Y^o$ of $F$ over $Y^o$ is proper. But 
\best
W^o= F^{-1}(Y^o)= W\Setminus F^{-1}(\Omega_F)
\eest
  is an open subset of $W$, so is a manifold with boundary 
$\bd W^o =\bd W\cap W^o= \bd W \Setminus F^{-1}(\Omega_F)$.
Moreover, as in the proof of \cite[Lemma~2.10]{IPThin}, under the long exact sequence sequence 
\best
\label{}
\xymatrix{
\sHH_{d+1}(W^o)\ar[r]^{\rho\quad}& \sHH_{d+1}(W^o\Setminus\partial W^o) 
\ar[r]^{\quad \bd} & \sHH_{d}(\partial W^o) \ar[r]^{\iota_*} &\sHH_{d}(W^o)
}
\eest
we have  $\partial [W^o\Setminus \partial W^o]=[\partial W^o]$, and hence
 $\iota_* [\partial W^o] =0$.   Pushing forward by the proper map $F^o:W^o\ra Y^o$ gives
\bear\label{F.bd=0}
F^o_*[\partial W^o] =0  \quad \mbox{ in } \sHH_{d}(Y^o). 
\eear
Because  $\bd W= M_2\sqcup (-M_1)$, we have 
\best
\partial W^o= \bd W \Setminus F^{-1}(\Omega_F)= M_2^o \sqcup (-M_1^o),
\eest
where each
$
M_i^\circ = f_i^{-1}(Y^o)=M_i \Setminus f_i^{-1}(\Omega_F)
$
is an open subset of $M_i$ and therefore a submanifold. But the restriction $f_i^o$ of $f_i$ over $Y^o$ is equal to the restriction of $F^o$ to $M_i^o$, and therefore
\best
F^o_*[\partial W^o]=F^o_* [M_2^o]-F^o_*[M_1^o]= [f_2^o]-[f_1^o] = \rho_{Y^o} \big([f_2]- [f_1] \big). 
\eest
Combining this with  \eqref{F.bd=0} shows that \eqref{f0=f1} holds after restriction to $Y^o$,  completing the proof.
\end{proof}

\subsection{A pseudo-cycle criterion.}  In practice, one needs a method for verifying  condition \eqref{pseudocyclecondition} for continuous maps
\bear\label{4.3.1}
 f:M\to Z
\eear
with $M$ and $Z$ as in \eqref{3.3}.   To that end,  we impose  various regularity conditions on $\Omega_f$.

\medskip
 
\noindent{\bf Pseudo-cycle criterion.}   Assume that $\Omega_f$ is covered by the images of countably many  maps $\phi_n:U_n\to Z$ where, for each $n$, each $U_n$ is  a  $\sigma$-compact topological manifold of dimension $\le d-2$, and at least one of the following holds:
\begin{enumerate}\setlength{\itemsep}{4pt}
\item[(a)] $\phi_n$ is continuous and locally injective,
\item[(b)]  $\phi_n$ is locally Lipschitz,
\item[(c)]  $\phi_n$ is a $C^1$ map between $C^1$ manifolds.
\end{enumerate}

 \vspace{3mm}
 
  Lemma~\ref{LemmaA2} immediately implies:
 
 \begin{lemma}
\label{Lemma4.3A}
Any map \eqref{4.3.1} that satisfies the pseudo-cycle criterion is a pseudo-cycle.
\end{lemma}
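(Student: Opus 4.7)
The plan is essentially to package the hypotheses of the pseudo-cycle criterion into the form required by Lemma~\ref{LemmaA2} and read off the conclusion. By the criterion, $\Omega_f$ is covered by countably many sets $\phi_n(U_n)$, where each $U_n$ is a $\sigma$-compact topological manifold of dimension $\le d-2$ and each $\phi_n$ satisfies one of the three regularity conditions (a), (b), (c). These are precisely the shapes of maps admitted by Lemma~\ref{LemmaA2}, which bounds the Lebesgue covering dimension of a set so covered by the maximum dimension of the covering manifolds.

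First I would invoke Lemma~\ref{LemmaA2} with $S = \Omega_f$ and the given countable family $\{\phi_n : U_n \to Z\}$. This yields directly
\[
\dim \Omega_f \ \le\ d-2,
\]
which is condition \eqref{pseudocyclecondition} in the definition of a pseudo-cycle. The remaining requirement in the definition of a pseudo-cycle is that $f(M)$ have compact closure; this is part of the standing hypothesis on the map \eqref{4.3.1} being considered (the pseudo-cycle criterion is designed to furnish only the dimension bound, the compact-closure requirement being independent of $\Omega_f$). Thus both defining conditions of a pseudo-cycle hold, and \eqref{3.3}--\eqref{pseudocyclecondition} are verified.

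The main (and only) step is the application of Lemma~\ref{LemmaA2}; I expect no obstacle at the level of the present proof, since all the substantive dimension-theoretic work has been packaged into that lemma. The conceptual content that makes Lemma~\ref{LemmaA2} work — topological invariance of dimension for locally injective maps, Lipschitz control of Hausdorff (hence covering) dimension in case (b), and the corresponding bound in the $C^1$ case — is exactly what is needed, and the countable-union clause follows from the countable sum theorem for Lebesgue covering dimension applied to the $\sigma$-compact pieces of each $U_n$. Once those ingredients are accepted from Appendix~B, the present lemma is a one-line consequence, which is why the paper simply notes that Lemma~\ref{LemmaA2} ``immediately implies'' it.
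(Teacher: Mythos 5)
Your proof is correct and is exactly the paper's argument: the paper's entire proof consists of the remark that Lemma~\ref{LemmaA2} (applied with $S=\Omega_f$ and the covering maps $\phi_n$) immediately gives $\dim\Omega_f\le d-2$, with the compact-closure condition understood as a standing hypothesis on the map \eqref{4.3.1}. Your elaboration of why Lemma~\ref{LemmaA2} applies matches the intended reading, so there is nothing to add.
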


Note that no higher regularity on $f$ itself is needed.  Sometimes Condition (c), with $C^1$ replaced by $C^\infty$,  is  used in the definition of pseudo-cycle in place of \eqref{pseudocyclecondition}  (cf. \cite[\S6.5]{ms2}).

\subsection{Pseudo-cycles and thin compactifications.}   Pseudo-cycles and maps  from thin compactifications both determine  Steenrod homology classes. 
The next  lemma gives conditions under which these classes coincide; the corresponding fact for families is given in Theorem~\ref{theorem.A}. 
  
 \begin{lemma}
 \label{pseudocycleTheorem}
If a map  $f:M\to Z$ as in \eqref{3.3} extends to a continuous map $\ov{f}:\ov{M}\to Z$ from a thin compactification $\ov{M}=M\cup S$ and $\dim  \ov{f}(S)\le d-2$, then $f$ is a pseudo-cycle and
\bear\label{f=ov.f}
 [f]=\ov{f}_*[\ov{M}] \quad \mbox{\rm in}\  \sHH_d(X),
 \eear
 where $X=\ov{f}(\ov{M})$ is the image of $\ov f$.
  \end{lemma}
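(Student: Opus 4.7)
The plan has two parts: first verify that $f$ is a pseudo-cycle so that $[f]$ is defined by Lemma~\ref{pseudocycleclass}, then show that the two classes agree in $\sHH_d(X)$ by restricting to a carefully chosen open subset on which both can be computed explicitly.

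To check that $f$ is a pseudo-cycle: $X = \ov{f}(\ov{M})$ is compact because $\ov{M}$ is compact and $\ov{f}$ continuous, so the closure of $f(M)$ is compact. By Lemma~\ref{Lemma4.1B} we have $\Omega_f \subseteq \ov{f}(S)$, and monotonicity of Lebesgue covering dimension together with the hypothesis $\dim \ov{f}(S)\le d-2$ yields $\dim\Omega_f\le d-2$, which verifies \eqref{pseudocyclecondition}.

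For the equality, I would set $T = \ov{f}(S)$, $U = X\setminus T$, and $N = \ov{f}^{-1}(U)$. The key observation is that $N$ is disjoint from $S$ (any $s\in S$ has $\ov{f}(s)\in T$), so $N$ is actually an open subset of $M$, and $\ov{f}|_N = f|_N : N\to U$ is proper. Since $\Omega_f\subseteq T$, also $N\subseteq M^o$. The compact set $T$ has $\dim T\le d-2$, so by Lemma~\ref{LemmaB1}, $\sHH_d(T) = \sHH_{d-1}(T) = 0$, and the long exact sequence \eqref{1.LES} of the pair $(X,T)$ shows that the restriction $\rho_U:\sHH_d(X)\to\sHH_d(U)$ is an isomorphism. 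It therefore suffices to prove $\rho_U[f] = \rho_U\ov{f}_*[\ov{M}]$.

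Both sides would then be computed using naturality of $\rho$ under proper pushforward. For the right-hand side, naturality gives $\rho_U\ov{f}_*[\ov{M}] = (\ov{f}|_N)_*\rho_N[\ov{M}]$, and since $N$ is an open submanifold of $M$, combining $\rho_M[\ov{M}]=[M]$ from \eqref{1.fc} with \eqref{5.2MN} yields $\rho_N[\ov{M}]=[N]$, so $\rho_U\ov{f}_*[\ov{M}] = (f|_N)_*[N]$. For the left-hand side, applying the restriction diagram \eqref{M->f.U} with this $U\subseteq X\setminus\Omega_f$ and noting that $(f^o)^{-1}(U) = f^{-1}(U) = N$ similarly gives $\rho_U[f] = (f|_N)_*[N]$. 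The two classes then agree after $\rho_U$, hence agree in $\sHH_d(X)$.

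The main obstacle I anticipate is the bookkeeping around the (possibly strict) inclusion $\Omega_f\subseteq \ov{f}(S)$. Choosing $U = X\setminus\Omega_f$ directly would be more natural from the pseudo-cycle perspective, but then $\ov{f}^{-1}(U)$ can contain pieces of $S$, and $\rho_{\ov{f}^{-1}(U)}[\ov{M}]$ is not expressible in terms of the fundamental class $[M]$ alone. Enlarging the bad set from $\Omega_f$ to $T = \ov{f}(S)$ is precisely what forces the preimage into $M$ and allows the two computations to converge on the same explicit class $(f|_N)_*[N]$.
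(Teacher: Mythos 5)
Your proposal is correct and follows essentially the same route as the paper: verify the pseudo-cycle condition via Lemma~\ref{Lemma4.1B} and dimension monotonicity, then restrict to $U=X\Setminus \ov f(S)$, where $\rho_U$ is an isomorphism by the long exact sequence, and identify both sides with $(f|_{\ov f^{-1}(U)})_*[\ov f^{-1}(U)]$ using \eqref{Lemma3.4eq} and the fact that $\ov f^{-1}(U)\subseteq M$. Your $N$ and $T$ are exactly the paper's $M_U$ and $\ov f(S)$, so there is nothing to add.
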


 \begin{proof} Because $S$ is compact (cf. Definition~\ref{Defn1.1}), the assumptions imply that $\ov{f}(S)$ is a compact subset of the metric space $Z$, while  $\Omega_f\subseteq \ov f(S)$ by Lemma~\ref{Lemma4.1B}.  Lemma~\ref{DimensionTheoryLemma}a) then shows that $\dim \Omega_f\le\dim \ov f (S)\le d-2$,  so $f$ is a pseudo-cycle.

As in \eqref{3.om.s} and \eqref{6.YYFrseq}, the assumption that 
$\dim  \ov f(S)\le d-2$ implies that  the restriction to $U=X\Setminus \ov f(S)$ 
 \best
\begin{tikzcd}
\sHH_d(X)  \ar{r}{\rho_U}[swap]{\cong}&   \sHH_d(U)
\end{tikzcd}
\eest
is an isomorphism. 
Thus it suffices to show that  the two sides of \eqref{f=ov.f} are equal when restricted to $U$.  Using the notation of \eqref{4.MU}, the
restriction of $\ov f$ over $U$ is equal to the restriction $f_U:M_U\ra U$ of $f$ over $U$ (because 
$\ov f^{-1}(U)\subseteq \ov M\Setminus S=M$).  Therefore 
\best
\rho_U \ov f_*[\ov M]= f_{U*} \rho_{M_U}[\ov M]=f_{U*}[M_U],
\eest
which is equal to $\rho_U[f]$ by  \eqref{Lemma3.4eq}.
 \end{proof}

\setcounter{equation}{0}
\section{Intersection pairings of Pseudo-cycles } 
\label{section4}
\medskip

In an oriented  differentiable manifold, there are two ways to define the intersection between two pseudo-cycles $f$ and $g$ of complementary dimension.   A {\em geometric} intersection $f\cdot g$  is obtained by perturbing the maps to make them (strongly) transverse, and then counting the intersection points with sign (cf.   \cite[\S 6.5]{ms2}).  A {\em homological} intersection $[f]\bullet [g]$ is obtained  by applying the  intersection pairing in Steenrod homology to the pseudo-cycles classes defined by  Lemma~\ref{pseudocycleclass}.  After reviewing these definitions, we give conditions under  which  these two intersection pairings are equal.

\smallskip

 Throughout this section,  all manifolds are $C^1$, oriented,  finite-dimensional and $\sigma$-compact (as in   \cite[\S 6.5]{ms2}).   Each such manifold is separable and  metrizable, and admits a proper embedding into euclidean space (see Appendix~\ref{sectionA}).   We fix one such manifold $N$  of dimension $n$, and consider $C^1$ maps
\bear\label{New4.0}
f:M \to N  \qquad g:P\to N
\eear
 from manifolds $M$ and $P$.  We will call these ``$C^1$ pseudo-cycles'' if they are pseudo-cycles in the sense of McDuff and Salamon \cite[\S 6.5]{ms2}. 
  Thus a $C^1$ map  $f:M\ra N$ between oriented manifolds  is a {\em $C^1$ pseudo-cycle} if  
\begin{enumerate}\setlength{\itemsep}{4pt}
\item[(i)] $\ov {f(M)}$ is compact, and 
\item[(ii)] $\Omega_f$ is covered by the image of a $C^1$-map $f':M'\ra N$ with $\dim M'\le \dim M-2$. 
\end{enumerate}

\subsection{Geometric intersections.}    As in \cite[\S 6.5]{ms2}, we say that  two $C^1$  pseudo-cycles   $f$ and $g$  are {\em strongly transverse} if  $f$ is transverse to  $g$ as $C^1$-maps,  and
 \bear\label{str.tr}
 \Omega_f \cap \ov {g(P)}=\emptyset, \quad \Omega_g \cap \ov {f(M)}=\emptyset.
 \eear 
 If, in addition,   $f$ and $g$ have complementary dimensions ($\dim M+\dim P=\dim N$),  then   
  $$
  Z\, =\, \big\{(x, y)\,|\, f(x)=g(y)\big \}\, = \, (f\ti g)^{-1}(\Delta)
  $$
 is a compact oriented 0-dimensional manifold.  The 
{\em  geometric intersection number} is then defined by
 \bear\label{f.dot.g.MS}
 f\cdot g= \sum_{z\in Z} \sigma(z),
 \eear
 where  $\sigma(z)=\pm 1$ is the sign of the local intersection of $f$ and $g$ at $f(x)=g(y)$.   In fact, they show that  given two complementary dimension $C^1$-pseudo-cycles, then $g$ can be perturbed (by a  $C^1$\!-\,small diffeomorphism of the target) to make it strongly transverse to $f$, and that the resulting  intersection number 
\bear\label{def.geom.int.1}
f\cdot g\in \Z
\eear
is independent of choices, and depends only on the cobordism classes of $f$ and $g$ \cite[Lemma~6.5.5]{ms2}. 
 
\subsection{Homological intersections.}  Appendix~\ref{sectionA}  relates Steenrod to other homology theories and describes features of  Steenrod homology beyond those described in Section~1.   Using coefficients $\Z$ or $\Q$,
 for any closed subset $X$ of the manifold $N$ as in \eqref{New4.0}, there are natural isomorphisms  
\bear\label{5.1}
  \sHH_*(X)  \cong    H^{n-*}(N, N\Setminus X) \cong \HBM_*(X) 
\eear
between Steenrod homology, relative singular cohomology,  and Borel-Moore homology \eqref{A.D1} and \eqref{A.D2}. By these isomorphisms, the intersection theory facts stated in Appendix~\ref{sectionA} for Borel-Moore homology carry over to  Steenrod homology.   Thus, while we work with Steenrod homology, 
all results in this section hold   with Steenrod replaced by Borel-Moore homology.

\smallskip

Under \eqref{5.1}, the cup product in singular cohomology corresponds to a Borel-Moore  intersection pairing \eqref{A.BM.fat.int}, and hence to a Steenrod intersection pairing
\bear\label{5.I2}
 \xymatrix{
\sHH_d(X) \otimes \sHH_{k}(Y)\ \ar[r]^\bullet & \sHH_{d+k-n}(X\cap Y).
}
\eear
This is natural under restriction  as in  \eqref{rho.dot}: for each open subset $U$ of $N$
\bear\label{5.Property1}
\rho_U (a\bullet b)= \rho_U(a )\bullet \rho_U (b).
\eear
In particular, if $X\cap Y$ is compact and we use $\Z$ coefficients, we get an intersection number 
\bear\label{4.epbullet}
a \cdot b = \ep(a\bullet b)\in \Z.  
\eear
between classes of complementary dimensions (cf. \eqref{int.number}).

\subsection{Relating intersection pairings.} Now consider two $C^1$ pseudo-cycles \eqref{New4.0} with complementary dimensions $d$ and $n-d$. By Lemma~\ref{pseudocycleclass}, these  determine homology classes 
 \best
 [f]\in\sHH_d(X),   \qquad   [g]\in\sHH_{n-d}(Y),  
 \eest 
in the closed subsets  $X=\ov {f(M)}$ and $Y=\ov {g(P)}$ of $N$.  The $C^1$ pseudo-cycle condition (i) above ensures that $X\cap Y$ is compact, so there is a {\em homological intersection number}
$$
[f] \cdot [g] \in\Z,
$$
defined by \eqref{4.epbullet}, which we can compare to the geometric intersection number \eqref{def.geom.int.1}.

 \begin{prop} 
\label{Dot=dot} If  $f$ and $g$ as in \eqref{New4.0} are $C^1$ pseudo-cycles of complementary dimension,   then their geometric and homological intersection numbers are equal:
\bear\label{f.dot.g=f.bullet.g}
 f\cdot g \ =\ [f]  \cdot    [g].     
\eear
\end{prop}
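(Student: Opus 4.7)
The plan is to reduce to the strongly transverse case by cobordism invariance, and then localize the intersection pairing near each intersection point. First, by \cite[Lemma~6.5.5]{ms2}, there is a $C^1$-small diffeomorphism $\psi: N \to N$ such that $g' = \psi\circ g$ is strongly transverse to $f$, and by definition $f\cdot g = f\cdot g'$. The maps $g$ and $g'$ are cobordant via $G:P\times[0,1]\to N$ obtained from an isotopy from the identity to $\psi$, and $\dim\Omega_G\le \dim\Omega_g + 1\le n-d-1$, so Lemma~\ref{L.indep.cob} yields $[g]=[g']$ in the Steenrod homology of $\overline{G(P\times[0,1])}$. The naturality \eqref{5.Property1} of the intersection pairing then gives $[f]\cdot[g]=[f]\cdot[g']$, so I may assume $f$ and $g$ are strongly transverse.

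Under strong transversality, the conditions \eqref{str.tr} force $X\cap Y = f(M)\cap g(P)$ to be a finite set $\{p_1,\dots,p_k\}$, and $f\cdot g=\sum_i \sigma_i$, where $\sigma_i\in\Z$ is the sum of the signs of all transverse pairs $(x,y)\in f^{-1}(p_i)\times g^{-1}(p_i)$. For each $p_i$ I choose an open ball $U_i\subset N$ around $p_i$, the $U_i$ pairwise disjoint, each disjoint from $\Omega_f$ and $\Omega_g$, and small enough that $f$ and $g$ restrict to $C^1$ embeddings on each connected component of $f^{-1}(U_i)$ and $g^{-1}(U_i)$. Set $U=\bigsqcup_i U_i$. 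Since $X\cap Y\subset U$, the properties \eqref{5.Property1} and \eqref{Lemma3.4eq} give
\[
[f]\bullet[g] \ =\ \rho_U[f]\bullet \rho_U[g] \ =\ [f_U]\bullet [g_U] \qquad \text{in } \sHH_0(X\cap Y),
\]
where, by bilinearity over components, $[f_U]$ and $[g_U]$ are sums of Steenrod fundamental classes of the embedded oriented submanifolds cut out by $f$ and $g$ on each component.

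The problem thus reduces to a local computation: for transversally intersecting oriented $C^1$-submanifolds $A,B$ of complementary dimensions in an open ball $U\subset\R^n$ meeting at a single point $p$ with local intersection sign $\sigma\in\{\pm 1\}$, one has $[A]\bullet[B]=\sigma\cdot[p]$ in $\sHH_0(\{p\})$. Under the isomorphism \eqref{5.1}, $[A]$ and $[B]$ correspond to the Thom classes in $H^{n-d}(U,U\setminus A)$ and $H^{d}(U,U\setminus B)$ and the pairing $\bullet$ corresponds to cup product; in local coordinates adapted to $A$ and $B$ a direct calculation identifies this cup product with $\sigma$ times the generator of $H^n(U,U\setminus\{p\})$, whose augmentation $\epsilon$ from \eqref{4.epbullet} is $\sigma$. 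Summing over components and over $i$ yields $[f]\cdot[g]=\sum_i\sigma_i=f\cdot g$. The main obstacle is precisely this sign bookkeeping: chasing orientations through the chain of isomorphisms \eqref{5.1} to verify that the cup product of Thom classes matches the geometric sign convention in \eqref{f.dot.g.MS}. This is a standard local Poincar\'e--Lefschetz duality verification, but is where the care is required.
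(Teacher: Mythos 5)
Your proposal is correct and follows essentially the same route as the paper: reduce to the strongly transverse case via cobordism invariance of both pairings (using Lemma~\ref{L.indep.cob} and the perturbation of \cite[Lemma~6.5.5]{ms2}), then localize the homological pairing on a neighborhood of $X\cap Y$ disjoint from $\Omega_f\cup\Omega_g$ via \eqref{5.Property1} and \eqref{Lemma3.4eq}, and finish with a local Thom-class/sign computation. The local computation you defer as ``standard sign bookkeeping'' is exactly what the paper delegates to Example~\ref{A.Ex3} (via Lemma~\ref{L.f.transv.g} and Example~\ref{A.intersectionlemma1}) in Appendix~A, so nothing essential is missing.
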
 
\begin{proof} 
 First assume that $f$ and $g$ are strongly transverse.  Then by \eqref{str.tr},  we can choose an open neighborhood $U$ of $X\cap Y$ which is disjoint from $\Omega_f\cup \Omega_g$. 
Restricting to $U$ and using \eqref{5.Property1}, we have 
 \best
 [f] \bullet  [g] \ =\ \rho_U\left( [f]\bullet  [g]\right)\ =\  \rho_U[f]\bullet \rho_U[g].
 \eest
  Because   $X \cap U$ is an open subset of $X$ that does not intersect $\Omega_f$, Lemma~\ref{Lemma3.1} and equation \eqref{Lemma3.4eq}  imply that    $f$  restricts to a proper map $f_U: f^{-1}(U)\ra U$ with
 \best
 \rho_U [f]= [f_U], 
 \eest
 and similarly for $g$.   A  direct computation (see Example~\ref{A.Ex3}) shows that, with the notation of \eqref{f.dot.g.MS},
 \best
[f_U] \cdot [g_U] \ =\  \sum_{z\in Z} \sigma (z)\ =\  f\cdot g.  
 \eest
Thus \eqref{f.dot.g=f.bullet.g} holds  in the case that  $f$ and $g$ are strongly transverse.

\smallskip
 
 For the general case, note that both sides of \eqref{f.dot.g=f.bullet.g}  are invariant under  proper cobordism. Indeed,  if $G:W\to N$ is a cobordism between $g$ and $g'$ whose image has  compact closure $B=\ov{G(W)}$, then 
\best
[g]=[g']\quad \mbox{ in } \sHH_k(B)
\eest
by  Lemma~\ref{L.indep.cob}. Therefore 
\best
[f]\bullet [g] = [f]\bullet [g']\quad \mbox{ in } \sHH_0(B\cap X),
\eest 
where $X= \ov {f(M)}$ and where $B\cap X$ is  compact. We conclude that  $[f]\cdot [g]$,  like  \eqref{def.geom.int.1}, is  invariant under proper cobordisms of $g$.

Finally, as in Lemma~6.5.5 of \cite{ms2}, one can perturb $g$ (by a  compactly supported  diffeomorphism of the target  that is isotopic to the identity) to make $g$ strongly transverse to $f$.  This  gives a cobordism $G$, and the proposition follows. 
\end{proof} 

\subsection{Pairing with singular homology.}  Returning to the manifold $N$ in \eqref{New4.0},  there is also a natural transformation
\bear\label{4.phi}
\phi: H_*(N;\Z) \to \sHH_*(N;\Z)
\eear
from singular to Steenrod homology (cf. \eqref{A.trianglediagram}).  Every  class in the image of $\phi$ can be represented by a pseudo-cycle.  The construction, given in  \cite[\S 9.6 and \S 10.2]{ma} and \cite[Remark~6.5.3]{ms2},  can be summarized as follows.
 
\medskip

  Each  $b\in H_{k}(N;\; \Z)$ can be represented   by a singular cycle   $\ov{g}:Q\to N$ from  a $k$-dimensional  finite simplicial complex without boundary. After smoothing across the $(k-1)$-faces, we can assume that $Q=M\cup S$, where 
$M$ is an oriented $k$-dimensional  $C^1$ manifold  and $S$ is the $(k-2)$-skeleton.    Then \eqref{4.phi} is defined by
\bear\label{phi=g}
\phi(b)=\ov{g}_*[Q].
\eear
By further  smoothing,     we can assume
that the   restriction of $\ov{g}$ to each simplex of $Q$ is $C^1$, and  therefore the restriction of $\ov{g}$ to   $M$ is 
a $C^1$-pseudo-cycle 
\bear\label{g'=}
g = \ov{g}\big|_M:M \ra N.
\eear 

\medskip

In this setting,  McDuff and Salamon  associate to each  $C^1$ pseudo-cycle $f:M \ra N$ a homomorphism 
\bear\label{def.phi.f}
\Phi_f: H_*(N;\Z)\ra \Z
\eear
from  singular homology defined by  the geometric intersection pairing \eqref{f.dot.g.MS}:
\bear\label{4.def.phi} 
\Phi_f(b)= f \cdot g.
\eear
They show  that this is  independent of the representative  $\ov{g}$ of $b$ and its smoothing \cite[Lemma~6.5.6]{ms2}. 
In fact,  $\Phi_f$ can be written terms of   the topological  intersection pairing \eqref{4.epbullet} in Steenrod  homology as follows.

\begin{prop} 
For each $C^1$ pseudo-cycle  $f$,  the pairing \eqref{4.def.phi} satisfies 
\bear\label{5.PhiSteenrod}
\Phi_f(b)\ =\ [f] \cdot \phi(b)  \qquad \forall b\in H_*(N;\; \Z).
\eear
 \end{prop}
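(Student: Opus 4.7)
The plan is to reduce the statement to the homological intersection number already handled in Proposition~\ref{Dot=dot}, by replacing the singular class $\phi(b)$ with the pseudo-cycle class of a suitable smoothed representative of $b$. To start, use \eqref{phi=g}--\eqref{g'=} to represent $b$ by a $C^1$-smoothed singular cycle $\bar{g}: Q \to N$ with $Q = M \cup S$, where $S$ is the $(n{-}d{-}2)$-skeleton and $g = \bar{g}|_M$ is a $C^1$-pseudo-cycle of dimension $n-d$. By definition $\Phi_f(b) = f\cdot g$, and since $f$ and $g$ are complementary-dimensional $C^1$-pseudo-cycles, Proposition~\ref{Dot=dot} gives $f\cdot g = [f]\cdot[g]$, where $[g] \in \sHH_{n-d}(\bar{g}(Q))$ is the pseudo-cycle class of Lemma~\ref{pseudocycleclass}. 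Thus the task reduces to showing $[f]\cdot[g] = [f]\cdot \phi(b)$.

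For this, observe that $Q$ is a thin compactification of $M$: the subcomplex $S$ has Lebesgue covering dimension $\le n-d-2$, so Lemma~\ref{LemmaA2} gives $\sHH_j(S)=0$ for $j>n-d-2$. Moreover $\dim \bar g(S)\le \dim S\le n-d-2$ by Lemma~\ref{DimensionTheoryLemma}(a). Lemma~\ref{pseudocycleTheorem} therefore applies to $g$ and its continuous extension $\bar g:Q\to N$, yielding
\[
[g] = \bar{g}_*[Q]\quad \text{in } \sHH_{n-d}(\bar{g}(Q)).
\]
Pushing forward along the inclusion $\jmath:\bar g(Q)\hookrightarrow N$ recovers exactly $\phi(b)=\bar g_*[Q]$, i.e.\ $\jmath_*[g]=\phi(b)$ in $\sHH_{n-d}(N)$.

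The last step is to check that this pushforward is compatible with the intersection pairing. By the naturality of the Steenrod intersection product under inclusions of closed subsets (a consequence of the identification \eqref{5.1} with cup products in the singular cohomology of the ambient manifold $N$),
\[
\jmath_*\bigl([f]\bullet[g]\bigr) \;=\; [f]\bullet \jmath_*[g] \;=\; [f]\bullet\phi(b)
\]
in $\sHH_0(X)$, where $X=\overline{f(M)}$. Applying the augmentation $\epsilon$ (which is natural under pushforward to $N$) to both sides gives $[f]\cdot[g] = [f]\cdot\phi(b)$. Combined with the first paragraph this yields $\Phi_f(b)=f\cdot g = [f]\cdot\phi(b)$, as required. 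The conceptual step — choosing the $C^1$-smoothed representative so that Lemma~\ref{pseudocycleTheorem} matches its pseudo-cycle class with $\phi(b)$ — is where all the work sits; the naturality of $\bullet$ is then routine bookkeeping.
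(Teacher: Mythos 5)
Your proof follows essentially the same route as the paper's: smooth a simplicial representative of $b$ so that $Q=M\cup S$ is a thin compactification with $g=\ov g|_M$ a $C^1$ pseudo-cycle, identify $[g]=\ov g_*[Q]=\phi(b)$ via Lemma~\ref{pseudocycleTheorem}, and conclude with Proposition~\ref{Dot=dot}; your extra paragraph making explicit the naturality of $\bullet$ under the inclusion $\ov g(Q)\hookrightarrow N$ is a point the paper leaves implicit, and is correct. One small mis-citation: the bound $\dim \ov g(S)\le n-d-2$ does not follow from Lemma~\ref{DimensionTheoryLemma}(a), which concerns subspaces rather than continuous images (continuous maps can raise covering dimension); the correct justification, as in the paper, is that $\ov g(S)$ is covered by images of $C^1$ maps from the cells of the $(n-d-2)$-skeleton, so Lemma~\ref{LemmaA2} applies.
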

\begin{proof}
Given $b\in H_k(N;\; \Z)$, choose  a map $\ov{g}:Q\to N$  as in \eqref{phi=g} and a smoothing as in \eqref{g'=}.  Note that $Q$ is the disjoint union of $M$ and the $(d-2)$-skeleton $S$ of $Q$, so $Q$ is a thin compactification of $M$. Moreover,  $g(S)$ is covered by the images of $C^1$-maps from the cells of the $(d-2)$-dimensional skeleton of $Q$. Hence, by Lemmas~\ref{Lemma4.3A} and \ref{pseudocycleTheorem}, the pseudo-cycle class  of $g$ is
\bear\label{g=phi(a).2}
[g] =\ov{g}_*[Q] =  \phi(b).
\eear
Then by \eqref{4.def.phi} and Proposition~\ref{Dot=dot}, 
$\Phi_f(b) =  f \cdot g = [f] \cdot [g]  = [f] \cdot \phi(b)$.
\end{proof}

\medskip

Finally, if $N$ is closed,  \eqref{5.PhiSteenrod} can be translated into a pairing in singular theory:

\begin{cor} 
\label{4.lastcor}
 Let $N$ be a   closed  manifold as in \eqref{New4.0}.   Then for each $C^1$ pseudo-cycle  $f$, there is   a  singular homology class $a_f$ such that $\phi(a_f)=[f]$ and  
$$
\Phi_f (b) \ =\  a_f \cdot b\ =\ \langle b, \;  PD^{-1}a_f \rangle
$$
 for each $b\in H_*(N; \;\Z)$,  where $PD:H^*(N)\to H_*(N)$ is  Poincar\'{e}  duality.
 \end{cor}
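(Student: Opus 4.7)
The plan is to transfer the Steenrod pseudo-cycle class $[f]$ into singular homology via $\phi$, and then to rewrite the pairing identity \eqref{5.PhiSteenrod} inside singular theory on the closed manifold $N$. The first step is to observe that, because $N$ is a closed (compact, locally contractible) manifold, the natural transformation $\phi: H_*(N;\Z)\to \sHH_*(N;\Z)$ of \eqref{4.phi} is an isomorphism, since Steenrod, \Cech and singular homologies agree on compact ANRs (cf.\ Appendix~\ref{sectionA}). Writing $\iota:X\hookrightarrow N$ for the inclusion of $X=\ov{f(M)}$ and abbreviating $\iota_*[f]$ to $[f]$, I will define
\[
a_f\ :=\ \phi^{-1}\bigl([f]\bigr)\ \in\ H_d(N;\Z),
\]
so that $\phi(a_f)=[f]$ by construction.

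Next, I would invoke \eqref{5.PhiSteenrod} to write $\Phi_f(b)=[f]\cdot\phi(b)$, viewing both classes in $\sHH_*(N)$. The key step is then to check that, under the isomorphisms $\sHH_*(N)\cong H^{n-*}(N)$ of \eqref{5.1} and their singular counterpart, the Steenrod intersection pairing is identified with the cup product in $H^*(N)$ through Poincar\'e duality, and similarly for the classical singular intersection pairing. Consequently $\phi$ intertwines the two pairings, and
\[
\Phi_f(b)\ =\ [f]\cdot\phi(b)\ =\ \phi(a_f)\cdot\phi(b)\ =\ a_f\cdot b,
\]
with the right-hand side being the classical singular intersection number on the closed oriented manifold $N$.

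Finally, the standard cohomological identity on a closed oriented manifold gives
\[
a_f\cdot b\ =\ \langle PD^{-1}(a_f)\cup PD^{-1}(b),\,[N]\rangle\ =\ \langle PD^{-1}(a_f),\,b\rangle\ =\ \langle b,\,PD^{-1}(a_f)\rangle,
\]
by $PD(\alpha)=\alpha\cap [N]$ and the adjunction $\langle \alpha\cup\beta,[N]\rangle=\langle \alpha,\beta\cap [N]\rangle$. This completes the chain of identities.

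The main obstacle I anticipate is the middle step: verifying that the Steenrod intersection pairing on $N$ corresponds to the classical singular intersection pairing under $\phi$. This should follow cleanly from the fact that both pairings are obtained from the cup product in $H^*(N)$ via Poincar\'e duality together with \eqref{5.1}, but care is needed to track that the pseudo-cycle class, which a priori lives only in $\sHH_d(X)$, behaves correctly after pushforward into $\sHH_d(N)$, and that the Poincar\'e duality used to establish \eqref{5.1} agrees with the classical one on the closed manifold $N$.
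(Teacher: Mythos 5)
Your proposal is correct in its endpoints and begins exactly as the paper does (defining $a_f=\phi^{-1}[f]$ using that $\phi$ is an isomorphism for the closed manifold $N$), but the middle step takes a genuinely different route. You argue purely homologically: both the Steenrod pairing $\bullet$ of \eqref{5.I2} and the classical singular intersection pairing are defined as cup products transported through a duality isomorphism, so $\phi$ should intertwine them, giving $[f]\cdot\phi(b)=a_f\cdot b$ directly. This works, but it hinges on the compatibility you yourself flag at the end: that the singular Poincar\'e duality $PD$ and the Steenrod duality \eqref{A.D1}/\eqref{A.PD} commute with $\phi$ (equivalently, that $\phi$ carries the singular fundamental class of $N$ to the Steenrod one and is compatible with cap products). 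That fact is standard but is not recorded in Appendix~A, so in your write-up it remains an unverified step rather than a citation. The paper sidesteps this entirely by a geometric realization: it feeds $a_f$ back through the smoothing construction \eqref{phi=g}--\eqref{g'=} to produce a $C^1$ pseudo-cycle $h$ with $[h]=\phi(a_f)=[f]$, uses \eqref{5.PhiSteenrod} and Proposition~\ref{Dot=dot} to get $\Phi_f(b)=[f]\cdot\phi(b)=h\cdot g$, and then evaluates $h\cdot g$ as an honest transverse intersection of singular cycles representing $a_f$ and $b$, where the identification with $\langle b, PD^{-1}a_f\rangle$ is classical. Your approach buys a cleaner, more conceptual argument at the cost of one extra naturality lemma about $\phi$ and the dualities; the paper's buys a proof that only reuses results already established in Sections~3--4. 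Either way the conclusion stands, provided you supply (or cite) the duality-compatibility of $\phi$.
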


\begin{proof} 
For compact manifolds,   \eqref{4.phi} is an isomorphism (cf. \eqref{A.H_*N}),  so  $[f]= \phi(a_f)$ for some unique $a_f\in H_*(N)$. Starting with this class $a_f$, we can repeat the procedure of  \eqref{phi=g} and \eqref{g'=} to obtain an $\ov{h}:P\ra N$, such that, as in \eqref{g=phi(a).2}, 
\best
[h]\;= \;  \ov{h}_*[P] \;= \;  \phi(a_f) =[f],
\eest
where the $C^1$-pseudo-cycle $h$ is the restriction of $\ov{h}$ to the complement of the codimension 2 skeleton of $P$. Then
\best
\Phi_f (b) \;=\;  [f] \cdot \phi(b) \;= \; \Phi_{h} (b) \;=\; h\cdot g
\eest
by \eqref{4.def.phi} and \eqref{5.PhiSteenrod}. But after a further perturbation, $\ov{h}$ and $\ov{g}$ are transversely intersecting singular cycles that represent $a_f$ and $b$.  Hence
 \best
h\cdot g  \; = \; \ov{h}_*[P] \cdot  g_*[Q] \;= \;  a_f\cdot b= \lg b, PD^{-1}a_f\rg,  
\eest
using the fact  that, in singular theory,  intersection numbers depend only on homology and are given by  Poincar\'{e} duality  (we use the same sign conventions as in \eqref{int.ep}).  
\end{proof}

\setcounter{equation}{0}
\section{Intersections and thin families} 
\label{section5}
\medskip

We now apply the results of Sections~3 and 4 to  Fredholm thin families,  and show that the invariants defined  in Corollary~\ref{1.invtcor} using relative fundamental classes are equal to those defined in terms of  intersections of  pseudo-cycles, as is done, for example,  in \cite{ms2}.  All of the spaces in this section and the next are assumed to be metrizable, and we assume that   $N$ is a  compact manifold without boundary, so there are  natural   isomorphisms (see \eqref{A.H_*N})
\bear\label{5.New1}
\xymatrix{
H_*(N; \Z) \ar[r]_\cong  & \sHH_*(N;\Z) \ar[r]_\cong^\gamma  & \cHH_*(N;\Z).
}
\eear
\medskip

Fix a relatively oriented Fredholm family of index~$d$ (cf. \eqref{2.1}), together with a $C^1$  map $f$ to a  closed  oriented  $C^1$ manifold $N$ of dimension $n$:
$$
\xymatrix{
\M \ar[d]^{\pi} \ar[r]^f & N\\
\P &
}
$$
Also fix  a Fredholm thin compactification $\ov\M$  of $\M$ as in Definition~\ref{1.Def1.1}.
\bear\label{5.MNP}
 \xymatrix{
\Sa \ar[r]^{\phi_\al} \ar[dr]_{\pi_\alpha}& \ov{\M} \ar[d]^{\ov\pi} \ar[r]^{\ov f} &N \\
& \P
}\eear
and let $\P^*\subset \P$ be the set of regular parameters defined in \eqref{2.regularvalues}.   In this section we  consider continuous  maps  $\ov{f}:\ov\M \to N$ as in the above diagram, and use the following terminology.

\begin{defn} \label{def5.1}
For a map $\ov{f}:\ov\M \to N$   as in \eqref{5.MNP}, we say $\ov{f}$   is   $C^m$ if the composition  
$\ov f\circ \phi_\alpha:{\cal S}_\alpha\to N$ is $C^m$ for each $\alpha$ (including ${\cal S}_0=\M$). 
\end{defn}
In this situation,  Theorem~\ref{theorem1.2} implies that each fiber $\ov\M_p$ of $\ov\pi$ has a relative fundamental class $[\ov\M_p]^\rfc \in \cHH_d(\ov\M_p)$.  As in \eqref{1.invts},  this  class pushes forward and pairs with  \Cech cohomology, defining a homomorphism
 $$
I_p:  \cHH^*(N;\Z)\to \Z 
 $$
by
\bear\label{5.invts}
I_p(\beta)\ =\ \langle \ov{f}_*[\ov\M_p]^\rfc , \;  \beta \rangle  \ =\ \langle [\ov\M_p]^\rfc, \;  \ov{f}^*\beta \rangle.
\eear
By Corollary~\ref{1.invtcor}, $I_p$ is independent of $p$ on each path-connected component of $\P$.

The number \eqref{5.invts} is often written as
$$
 \int_{[\ov\M_p]^\rfc} \ov{f}^*\beta.
$$
It is commonly interpreted either \setlength{\itemsep}{4pt}
\begin{enumerate}
\item[(1)]  in terms of intersections with pseudo-cycles  in $N$  that represent homology classes Poincar\'{e} dual to $\beta$, or 
\item[(2)] in terms of ``cutdown''  spaces.  
\end{enumerate}
The next theorem shows how the relative fundamental class approach is compatible with interpretation (1).   Section~\ref{section6}  addresses  compatibility with interpretation (2).
 
\bigskip

Given maps as in  diagram \eqref{5.MNP} and $p\in\P^*$, let  $\ov{f}_p:\ov\M_p\to N$ be the  restriction of $\ov{f}$ to the fiber  of $\ov\pi$ over $p$, and its further restriction $f_p:\M_p\to N$.
\medskip

\begin{theorem}\label{theorem.A}  Suppose that the map $\ov{f}$   in \eqref{5.MNP} is  $C^1$ in the sense of Definition~\ref{def5.1}.  Then  for  each $p\in \P^*$, 
\begin{enumerate}[(a)]\setlength{\itemsep}{4pt}
\item $f_p$ is a $C^1$ pseudo-cycle and its pseudo-cycle class \eqref{4.psClass} satisfies
\bear\label{f.p=f.p*}
\gamma [f_p]= \ov{f}_*[\ov\M_p]^\rfc
\eear
 in  $\cHH_d(N;\Z)$ (cf. \eqref{5.New1}).
\item If $N$ is compact,  the topological invariant \eqref{5.invts} is related to the geometric intersection pairing  \eqref{def.phi.f} by 
\bear\label{5.IPhi}
I_p(\beta) \; =  \;   (-1)^{d(n-d)}\,  \Phi_{f_p}(b),
\eear
where $b=D\beta \in H_{n-d}(N)$ is  the Poincar\'{e} dual of $\beta$  under \eqref{A.PD}.
\end{enumerate}
\end{theorem}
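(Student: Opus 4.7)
By Theorem~\ref{theorem1.2}, for $p\in\P^*$ the fiber $\ov\M_p$ is a thin compactification of the $d$-dimensional oriented manifold $\M_p$, with singular locus $\SS_p\subseteq\bigcup_\alpha\phi_\alpha(\Sap)$, where each $\Sap$ is a $C^1$ manifold of dimension $d_\alpha\le d-2$ by the Sard-Smale theorem. Since, by Definition~\ref{def5.1}, each composition $\ov f\circ\phi_\alpha$ is $C^1$, the image $\ov f(\SS_p)\subseteq N$ is covered by countably many $C^1$ maps from $C^1$ manifolds of dimension $\le d-2$, which satisfies the pseudo-cycle criterion (c). Combining this with Lemma~\ref{Lemma4.1B}, which places $\Omega_{f_p}\subseteq \ov f(\SS_p)$, and invoking Lemma~\ref{Lemma4.3A}, I conclude that $f_p$ is a $C^1$ pseudo-cycle. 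Moreover, Lemma~\ref{LemmaA2} gives $\dim\ov f(\SS_p)\le d-2$, so Lemma~\ref{pseudocycleTheorem} applies to the continuous extension $\ov f_p:\ov\M_p\to N$ and yields the Steenrod equality $[f_p]=(\ov f_p)_*[\ov\M_p]$. Applying the natural transformation $\gamma$, using that $[\ov\M_p]^\rfc=\gamma[\ov\M_p]$ by the normalization of the RFC together with \eqref{1.Cechfc}, gives the required identity \eqref{f.p=f.p*} in $\cHH_d(N;\Z)$.

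\textbf{Plan for part (b).} With $N$ compact, the natural isomorphisms \eqref{5.New1} identify singular, Steenrod and \Cech homologies of $N$, so the pseudo-cycle class $[f_p]$ corresponds to a unique singular class $a_{f_p}\in H_d(N;\Z)$ with $\phi(a_{f_p})=[f_p]$. Using part (a), I rewrite
\[
I_p(\beta)\;=\;\langle\ov f_*[\ov\M_p]^\rfc,\beta\rangle\;=\;\langle\gamma[f_p],\beta\rangle\;=\;\langle a_{f_p},\beta\rangle,
\]
where the last equality follows from the compatibility of the Kronecker pairing under \eqref{5.New1}. By Corollary~\ref{4.lastcor}, $\Phi_{f_p}(b)=\langle b,PD^{-1}a_{f_p}\rangle$. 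Since $b=D\beta$, the identity \eqref{5.IPhi} reduces to the Poincaré-duality comparison between $\langle a_{f_p},\beta\rangle$ and $\langle D\beta,PD^{-1}a_{f_p}\rangle$, which differs only by the sign from the graded commutativity of the cup product on a closed oriented $n$-manifold when exchanging factors of degrees $d$ and $n-d$. This produces exactly the factor $(-1)^{d(n-d)}$ stipulated by the sign conventions recorded in Appendix~\ref{sectionA} and used in Corollary~\ref{4.lastcor}.

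\textbf{Main obstacle.} Part (a) is essentially bookkeeping: the hypotheses of Definition~\ref{1.Def1.1} and $C^1$-ness of $\ov f\circ\phi_\alpha$ are precisely engineered to feed into the pseudo-cycle criterion and Lemma~\ref{pseudocycleTheorem}, and the only subtlety is checking that $\gamma$ is natural with respect to pushforwards by the proper map $\ov f_p$. The actual hurdle lies in part (b): carefully reconciling the conventions for the Kronecker pairing in \Cech vs.\ singular theory, for the two Poincaré duality maps $D$ and $PD$ used in the statement and in Corollary~\ref{4.lastcor}, and for the intersection pairing \eqref{4.epbullet}, so that the sign $(-1)^{d(n-d)}$ emerges without extraneous factors. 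This is delicate but not conceptually difficult, and amounts to tracing the identifications in Appendix~\ref{sectionA}.
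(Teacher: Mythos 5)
Your proposal is correct and follows essentially the same route as the paper: part (a) is the paper's argument (thin fiber via Lemma~\ref{lemma1.2}, the covering of $\ov f_p(\SS_p)$ by $C^1$ images of manifolds of dimension $\le d-2$, Lemma~\ref{LemmaA2}, Lemma~\ref{pseudocycleTheorem}, and naturality of $\gamma$ plus the normalization axiom), while part (b) differs only cosmetically in that you pass to singular homology and invoke Corollary~\ref{4.lastcor} (itself a consequence of \eqref{5.PhiSteenrod}) instead of computing directly with the Steenrod intersection pairing \eqref{int.ep}, the sign $(-1)^{d(n-d)}$ arising in both cases from the order reversal in \eqref{A.22b}. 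The only nit is a reference slip: the thin-compactification structure of the fiber $\ov\M_p$ comes from Lemma~\ref{lemma1.2}, not Theorem~\ref{theorem1.2}.
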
 

\begin{proof}
(a) Fix  a regular value  $p\in\P^*$.  By Lemma~\ref{lemma1.2}, $\ov\M_p$ is a thin compactification of the oriented $d$-dimensional manifold $\M_p$ with  singular locus
${\cal S}_p=\ov\M_p\Setminus \M_p$.
It therefore  has a Steenrod fundamental class $[\ov\M_p]$ and a corresponding \Cech class  $\gamma[\ov\M_p]$.    The naturality of $\gamma$  and the normalization axiom \eqref{1.A2} then imply that
\bear\label{5.fRFC} 
\ov{f}_*[\ov\M_p]^\rfc \ =\  \gamma \left((\ov{f}_{p})_*[\ov\M_p]  \right)   \quad \mbox{ in } \cHH_d(N;\Z).
\eear

For each $\alpha$, as in the proof of Lemma~\ref{lemma1.2},  the fiber $\pi_\al^{-1}(p)=(\Sa)_p$ is a 
 $\sigma$-compact $C^1$ manifold of dimension at most $d-2$.  Furthermore, the restriction of  $\ov f \circ \phi_\al$ to $(\Sa)_p$  is $C^1$ and 
 $$
 \ov f_p({\cal S}_p)\, \subseteq\, \bigcup_\alpha \ (\ov f_p\circ \phi_\al)(\Sa)_p.
 $$
 Lemma~\ref{LemmaA2} then shows that the compact set $\ov f_p({\cal S}_p)$ has dimension at most $d-2$.
Therefore, by Lemma~\ref{pseudocycleTheorem},  $f_p:\M_p \ra N$ is a $C^1$ pseudo-cycle, and
\best
 [f_p]\ =\ (\ov{f}_p)_*[\ov\M_p] 
\eest
in $\sHH_d(N)$.  Together with  \eqref{5.fRFC}, this yields  \eqref{f.p=f.p*}.

\medskip

(b) Returning to \eqref{5.invts} and using \eqref{f.p=f.p*},  and then using \eqref{A.twopairings} to switch from the \Cech to the Steenrod Kronecker pairing,  we have
$$
I_p(\beta)\ =\ \langle \gamma [ f_p], \; \beta \rangle \ =\   \langle  [ f_p], \; \beta \rangle.
$$
This last expression can be written as an intersection pairing as in \eqref{int.ep}.  Hence, using the definition of $b$ and \eqref{5.PhiSteenrod}, we have
$$
I_p(\beta)\ =\   D\beta  \cdot  [f_p] \ =\   (-1)^{d(n-d)}\, [ f_p] \cdot b  \ =\  (-1)^{d(n-d)}\,  \Phi_f(b).
$$
 \end{proof}

\setcounter{equation}{0}
\section{RFCs for Cutdown Families} 
\label{section6}
\medskip

We now turn to the second interpretation of  the intersection number \eqref{5.invts}.  The basic idea is that for a generic submanifold $V$ of $N$, the inverse image  of the map $\ov{f}:\ov\M\to N$ should be a ``cutdown'' family $\ov \V\to\P$ that has all of the properties of the original relatively thin family $\ov\M\to \P$;  in particular it should have a relative fundamental class.  We will show that this is true provided $f$ is ``fully transverse to $V$''.  If, in addition, the cutdown family has index~0,    the invariant \eqref{5.invts}  has the expected  geometric interpretation:  it is the signed number of elements of a generic fiber of this index~0 family,  i.e. those in a generic fiber of the original family whose images under $f$  lie in $V$.

This section gives the details.  We begin with a standard  formula \eqref{6.[XV]} that relates the fundamental classes of a manifold and a submanifold.  We then extend it first to (metrizable) thin compactifications of finite-dimensional manifolds, and then to Fredholm thin families.

\subsection{Orientation classes.}  Let $X$ be a closed subset of a locally compact  space $Z$.  Then there is a cap product   
\bear\label{5.Qcap.BM}
\xymatrix@=5mm{
  \HBM_d(Z)\otimes H^k(Z, Z\Setminus X)   \ar[rr]^{\qquad\quad \cap}     &&     \HBM_{d-k}(X)}
\eear
in Borel-Moore homology  with coefficients in $\Z$ or $\Q$  (see \eqref{A3.1}) with the naturality properties \eqref{A.cupnaturality1}--\eqref{A.cupnaturality3}.

Now suppose that $N$  is an oriented $C^1$ manifold of dimension $n$ and 
\bear\label{6.VinN}
V\hookrightarrow N
\eear
 is  a properly embedded oriented  submanifold of codimension $k$.  The orientations determine Borel-Moore fundamental classes $[N]\in \HBM_n(N)$ and $[V]$ respectively (see \eqref{fund.BM.sheaf}), and the Thom class of the normal bundle defines an  {\em orientation class}
\bear\label{5.transverseclass1}
u=u_{V, N}\in  H^{k}(N, N\Setminus V) 
\eear
 in    singular cohomology  (see \eqref{A.transverseclass}). As in \eqref{A.[XV]}, these are related by
\bear\label{6.[XV]}
[V]\ =\ [N] \cap u_{V, N} \quad \mbox { in } \HBM_{n-k}(V).
\eear

\subsection{Thin compactifications.}  
  Formula~\eqref{6.[XV]} extends  to metrizable thin compactifications as follows.     Fix  $V \subseteq N$  and $u$ as above.  Suppose that $\ov{V}\subseteq \ov{N}$ are
 thin  compactifications   of   $V$ and $N$ such that  
\begin{enumerate}[(i)]\setlength{\itemsep}{4pt}
\item  $V=\ov{V}\cap N$, and
\item the orientation class \eqref{5.transverseclass1}  is the restriction to $N$ of some class
 $\ov{u}\in H^*(\ov N, \ov N\Setminus \ov V)$, i.e. $ u=j_N^*\ov{u}$ where $j_N:N\to\ov{N}$ is the inclusion.
\end{enumerate}

 \begin{lemma}\label{L.smfld.thin} 
 If (i) and (ii) above hold and $\ov N$ is metrizable,  then 
 \bear\label{V=tau.cap.M}
[\ov V]= [\ov N]\cap  \ov u \quad \mbox { in }  \HBM_{*}(\ov V; \;\Z)=\sHH_{*}(\ov V;\; \Z).
 \eear
 \end{lemma}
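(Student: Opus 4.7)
The plan is to reduce the identity \eqref{V=tau.cap.M} to the known manifold identity \eqref{6.[XV]} by restricting both sides to the open submanifold $V \subseteq \ov V$ and observing that this restriction is injective in degree $n-k$.

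First I set $S_V = \ov V \setminus V$. Since $\ov V$ is a thin compactification of the $(n-k)$-dimensional manifold $V$, the defining condition \eqref{1.1} gives $\sHH_j(S_V;\Z) = 0$ for every $j > (n-k)-2$. The Steenrod long exact sequence \eqref{1.LES} for the closed pair $(\ov V, S_V)$ reads
$$
\sHH_{n-k}(S_V) \longrightarrow \sHH_{n-k}(\ov V) \xrightarrow{\;\rho_V\;} \sHH_{n-k}(V) \longrightarrow \sHH_{n-k-1}(S_V),
$$
and both outer groups vanish; thus $\rho_V$ is an isomorphism. It therefore suffices to prove that the two sides of \eqref{V=tau.cap.M} have the same image in $\sHH_{n-k}(V) \cong \HBM_{n-k}(V)$.

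Next I compute the two restrictions. On the left, property \eqref{5.2MN} for the fundamental class of a thin compactification yields $\rho_V[\ov V] = [V]$. On the right, $N$ is open in $\ov N$ (since $\ov N$ is a thin compactification) and $V = \ov V \cap N$ is open in $\ov V$; by hypotheses (i) and (ii), $\ov u$ restricts under the open inclusion of pairs $(N,N\setminus V)\hookrightarrow (\ov N, \ov N\setminus \ov V)$ to $u$, while \eqref{5.2MN} shows $[\ov N]$ restricts to $[N]$. Naturality of the cap product \eqref{5.Qcap.BM} under these open restrictions (properties \eqref{A.cupnaturality1}--\eqref{A.cupnaturality3}) then yields
$$
\rho_V\bigl([\ov N]\cap \ov u\bigr)\ =\ [N]\cap u\ =\ [V],
$$
the final equality being \eqref{6.[XV]}. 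Combining with the injectivity of $\rho_V$ gives the desired identity.

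The main point to pin down is the open-restriction naturality of the cap product \eqref{5.Qcap.BM}, specifically the compatibility stating that restricting the homology input along $V \hookrightarrow \ov V$ corresponds to restricting the relative cohomology input along $(N, N\setminus V) \hookrightarrow (\ov N, \ov N\setminus \ov V)$; one must also verify that the identifications $\HBM_*(\ov V) = \sHH_*(\ov V)$ and $\HBM_*(V) = \sHH_*(V)$ from Appendix~\ref{sectionA} (available here thanks to the metrizability of $\ov N$, hence of the closed subspace $\ov V$) intertwine this restriction with the Steenrod restriction $\rho_V$ in \eqref{1.LES}. Once this bookkeeping is in place, the argument is a direct diagram chase, and no further input about the structure of $S_V$ beyond the thinness condition \eqref{1.1} is needed.
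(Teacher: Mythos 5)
Your proposal is correct and follows essentially the same route as the paper: restrict both sides along the open inclusion $V\hookrightarrow \ov V$, use that $\rho_V$ is injective (indeed an isomorphism) in degree $n-k$ by thinness, and identify both restrictions with $[N]\cap u=[V]$ via the cap-product naturality \eqref{A.cupnaturality3} together with hypotheses (i) and (ii). The only cosmetic difference is that you derive the injectivity of $\rho_V$ directly from the long exact sequence \eqref{1.LES}, whereas the paper invokes the Borel--Moore analogue of the isomorphism \eqref{YYFrseq}; the substance is identical.
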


\begin{proof}   
Because $\ov{N}$ and  $\ov{V}$ are    compact and metrizable, their Steenrod and Borel-Moore homologies are isomorphic  \cite[\S5]{BM} (see also \S A.3 below).
For the proof, we work in Borel-Moore homology, using  \eqref{6.[XV]} and noting that  equations \eqref{1.rhoU}-\eqref{1.fc} hold in Borel-Moore homology (cf. \S A.2 and Lemma~\ref{LemmaB1}).  In particular, the restriction 
$\rho_V:   \HBM_{n-k}(\ov V)\to \HBM_{n-k}(V)$ is an isomorphism,  $\rho_V[\ov V]=[V]$ and $\rho_N[\ov N]=[N]$. Therefore it suffices to show that  the two sides of  \eqref{V=tau.cap.M} are equal after applying $\rho_V$.   But 
assumption (i), and the naturality \eqref{A.cupnaturality3}  of the cap product with respect to the restriction to $N$, and assumption~(ii) imply that
\best
\rho_V( [\ov N] \cap\ov u )\,=\, \rho_{N\cap \ov V}( [\ov N]\cap \ov u )\,=\, \rho_N [\ov{N}] \cap j^*_N(\ov u)  =   [ N]\cap u 
\eest
in $\HBM_*(V)$, and this is equal to  $[V] =\rho_V [\ov V]$ by \eqref{6.[XV]}.   
\end{proof}

\subsection{Cutdown families.} Fix a map  $\ov{f}:\ov\M \to N$ as in \eqref{5.MNP} and a codimension $k$ submanifold $V$ of $N$ as in \eqref{6.VinN}.  This data determines a  ``cutdown'' family 
\bear\label{5.cutdown}
\xymatrix{
\V \ar@{^{(}->}[r]  \ar[rd] &  \oV  \ar[d]^{\pi} & \hspace{-9mm} = \ov f^{\; -1}\!(V) \\
& \P &
}
\eear
obtained by restricting $\ov\pi$ to $ \ov f^{\; -1}\!(V)$, setting $\V=\oV\cap\M$, and giving $\V$  the  induced relative orientation.  Recall that the index $d$ family $\pi:\M \ra\P$ satisfies  \eqref{2.lbound}, and we assume that $f$ is $C^m$, where
\best
m > \max(d-k,0).
\eest

\begin{defn} \label{def6.1}
We say that  $\ov{f}:\ov\M \to N$  is  {\em fully  transverse} to a submanifold $V$ if  $\ov f\circ \phi_\alpha$ is transverse to $V$ for all $\alpha$.
\end{defn}

\begin{lemma}
\label{L.6.cut.down} With $N, V$ as above, if a map $\ov{f}$ as in \eqref{5.MNP}   is  $C^m$ and fully transverse to $V$, then the cutdown family \eqref{5.cutdown} is a Fredholm thin compactification 
of index $d-k$. 
\end{lemma}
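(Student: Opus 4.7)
The plan is to verify the three conditions of Definition~\ref{1.Def1.1} for the cutdown family by propagating the transversality hypothesis across each stratum. The workhorse is the standard Banach-manifold fact: if $F:X\to N$ is a $C^m$ map from a Banach manifold $X$ carrying a Fredholm map $\pi_X:X\to\P$ of index $e$, and $F$ is transverse to the codimension-$k$ submanifold $V$, then $F^{-1}(V)$ is a $C^m$ Banach submanifold of codimension $k$ on which the restriction of $\pi_X$ is Fredholm of index $e-k$. Since $m>\max(d-k,0)$, this delivers \eqref{2.lbound} for the new index $d-k$.

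First I apply this with $F=f$ and $X=\M$. Note that full transversality (Definition~\ref{def6.1}) includes the case $\alpha=0$, where $\phi_0$ is the inclusion and $\ov f\circ\phi_0=f$, so $f$ is itself transverse to $V$. This gives the open part $\V\to\P$ as a Fredholm family of index $d-k$, relatively oriented by combining the relative orientation of $\M$ with the coorientation of $V$ in $N$.

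Next I handle the compactification. Because $V$ is properly embedded in $N$ it is closed in $N$, so $\oV=\ov f^{-1}(V)$ is closed in $\ov\M$, hence metrizable as a subspace of the metrizable $\ov\M$. The restriction $\ov\pi|_{\oV}$ is proper as the restriction of a proper map to a closed set, and $\V=\oV\cap\M$ is open in $\oV$ because $\M$ is open in $\ov\M$. Thus the metrizable relative compactification required in Section~2 is in place.

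For the singular-stratum data I set $\SS_\alpha^V:=(\ov f\circ\phi_\alpha)^{-1}(V)\subseteq\SS_\alpha$ and let $\phi_\alpha^V:=\phi_\alpha|_{\SS_\alpha^V}:\SS_\alpha^V\to\oV$. Applying the workhorse fact to $X=\SS_\alpha$ and $F=\ov f\circ\phi_\alpha$ (which is transverse to $V$ by full transversality) gives a Fredholm family $\pi_\alpha^V:\SS_\alpha^V\to\P$ of index $d_\alpha-k\le d-2-k=(d-k)-2$. Each $\phi_\alpha^V$ inherits local injectivity, respectively local Lipschitz continuity, from $\phi_\alpha$, since the restriction of a locally injective or locally Lipschitz map to a subspace retains that property. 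Finally, because $\{\phi_\alpha(\SS_\alpha)\}$ covers the singular locus $\SS=\ov\M\Setminus\M$, the images $\{\phi_\alpha^V(\SS_\alpha^V)\}$ cover $\oV\Setminus\V=\SS\cap\oV$. I do not expect any serious obstacle: the argument is essentially bookkeeping for Banach-manifold transversality, and the main items requiring care are checking that the index drop is uniformly $k$ across all strata and that the $C^m$ hypothesis on $\ov f$ matches the regularity requirement \eqref{2.lbound} at the new index $d-k$.
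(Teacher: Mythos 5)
Your proposal is correct and follows essentially the same route as the paper: apply transversality of each $\ov f\circ\phi_\alpha$ to $V$ to cut each stratum $\Sa$ down to a codimension-$k$ submanifold on which the projection is Fredholm of index $d_\alpha-k$, then observe that the top stratum has index $d-k$ while the others have index at most $d-k-2$, so Definition~\ref{1.Def1.1} is satisfied. The paper's proof is terser and leaves the metrizability, properness, covering, and local injectivity/Lipschitz bookkeeping implicit, whereas you spell these out; the content is the same.
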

\begin{proof} By assumption, each composition $f_\al = \ov{f}\circ\phi_\alpha$   in \eqref{5.MNP}  is a $C^m$ map  transverse to $V$.  Therefore $\V_\alpha=f^{-1}_\al(V)$ is a closed codimension $k$ submanifold of ${\mathcal S_\al}$, and $\ov\pi$ restricts to a Fredholm map $\V_\alpha\to \P$ whose index is $(\ind\; \pi_\al)-k$.
 In particular,  the top stratum $\V=\oV\cap \M \to \P$ is a Fredholm family of index~$d-k$, and all other $\V_\alpha$ have index at most $d-k-2$. 
This implies that   the restriction of $\ov\pi$ to $\oV$ is a Fredholm thin compactification of $\V$ with index $d-k$ (cf. Definition~\ref{1.Def1.1}). 
\end{proof}

\subsection{The cutdown RFC}  In the remainder of this section, we will extend formula \eqref{V=tau.cap.M}  to cutdown families.  For this, we pass to rational coefficients and use the  natural identifications 
$$
\sHH_*(X;\Q)=\HBM_*(X;\Q)=\cHH_*(X;\Q)
$$
for compact metric spaces (see \ref{A.6}).
Under this identification \eqref{5.Qcap.BM}  becomes a cap product in rational \Cech homology.

\smallskip

For each parameter $p\in \P$, let $\ov{{\mathscr V}_p}$ and $\ov\M_p$ denote the fibers of the two families \eqref{5.cutdown} and \eqref{5.MNP} respectively, and $\ov f_p:\ov\M_p\ra N$ the restriction of $\ov f$ to $\ov\M_p$.  Then the pullback of \eqref{5.transverseclass1} is a class
\bear \label{pull.back.fp}
\ov f_p^*u\in H^k (\ov \M_p, \ov \M_p \Setminus \ov \V_p),
\eear
and hence the cap product  \eqref{5.Qcap.BM} induces a map 
\bear\label{capf*u.3}
 \ \cap\  {\ov f}_p^* u\ :  \cHH_d (\ov \M_p;\; \Q) \longrightarrow \cHH_{d-k}( \ov \V_p;\; \Q). 
\eear

 \begin{prop}
\label{P.6.cut.down}
Let  $V\hookrightarrow N$ as in \eqref{6.VinN}  with orientation  class \eqref{5.transverseclass1}. Assume a map $\ov{f}$ as in \eqref{5.MNP}   is  $C^1$ and fully transverse to $V$. Then the cutdown family \eqref{5.cutdown} has a relative fundamental class 
\bear\label{6.rfcVf}
[\ov{{\mathscr V}_p}]^\rfc \in  \cHH_*(\ov{{\mathscr V}_p}; \Z)
\eear
which is related to the relative fundamental class of 
$\ov \M$ by 
\bear\label{4.cut-down}
[\ov{{\mathscr V}_p}]^\rfc \ =\  [\ov \M_p]^\rfc \cap {\ov f}_p^* u 
\eear
in $\cHH_*(\ov{{\mathscr V}_p}; \Q)=\HBM_*(\ov{{\mathscr V}_p}; \Q)$  for all $p\in \P$.  
\end{prop}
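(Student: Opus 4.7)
The plan is to apply Theorem~\ref{theorem1.2} to the cutdown family for existence, and then use the uniqueness half of Proposition~\ref{extLemma}(a) to identify the cutdown RFC with the cap-product class, by checking the identification on the dense set of regular parameters.

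\textbf{Step 1 (Existence).} By Lemma~\ref{L.6.cut.down} the map $\ov\pi|_{\oV}:\oV\to\P$ is a Fredholm thin compactification of index $d-k$, so Theorem~\ref{theorem1.2} produces a unique relative fundamental class assigning to each $\phi\in\KP$ a class $[\ov\V_\phi]^{\rfc}\in\cHH_{d-k}(\ov\V_\phi;\Z)$. Restricting to inclusions $p\hookrightarrow\P$ gives \eqref{6.rfcVf}. Passing to $\Q$-coefficients via the identifications of \S A.6 puts this class in $\cHH_{d-k}(\ov\V_p;\Q)=\HBM_{d-k}(\ov\V_p;\Q)$, where the cap product \eqref{5.Qcap.BM} lives.

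\textbf{Step 2 (Cap-product relative homology functor).} The pullback class $\ov f^*u\in H^k(\ov\M,\ov\M\Setminus\oV)$ restricts, via the proper base-change square \eqref{2.Cat1}a, to a class $\wh\phi^*\ov f^*u\in H^k(\ov\M_\phi,\ov\M_\phi\Setminus\ov\V_\phi)$ for every $\phi\in\KP$. Cap product with $[\ov\M_\phi]^{\rfc}\in\cHH_d(\ov\M_\phi;\Q)$ then yields
\begin{equation*}
\nu(\phi)\ =\ [\ov\M_\phi]^{\rfc}\cap \wh\phi^*\ov f^*u\ \in\ \cHH_{d-k}(\ov\V_\phi;\Q).
\end{equation*}
Exactly as in the calculation \eqref{1.capcalc}, the naturality of the cap product with relative cohomology \eqref{A.cupnaturality1}--\eqref{A.cupnaturality3}, together with the naturality axiom \eqref{1.A1} for the RFC on $\ov\M$, shows that $\nu$ satisfies \eqref{1.A1}. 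Hence $\nu$ is a relative homology functor of degree $d-k$ associated to the cutdown family $\oV\to\P$.

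\textbf{Step 3 (Agreement on regular parameters).} Let $\P^*\subseteq\P$ be the dense second category set of regular values of Lemma~\ref{lemma1.2}, chosen small enough that each $p\in\P^*$ is also a regular value for the Fredholm families $\V_\alpha\to\P$ produced in the proof of Lemma~\ref{L.6.cut.down} (still second category by Sard-Smale). For such $p$ the normalization axiom gives $[\ov\M_p]^{\rfc}=[\ov\M_p]$ and $[\ov\V_p]^{\rfc}=[\ov\V_p]$ as fundamental classes of metrizable thin compactifications of $\M_p$ and $\V_p$. Now apply Lemma~\ref{L.smfld.thin} inside $\ov\M_p$, taking the role of $\ov N$ to be $\ov\M_p$, of $\ov V$ to be $\ov\V_p$, and of $\ov u$ to be $\ov f_p^*u$. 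Condition~(i) $\V_p=\ov\V_p\cap\M_p$ is immediate from the definition \eqref{5.cutdown}. Condition~(ii) says that $j_{\M_p}^*(\ov f_p^*u)=f_p^*u$ is the orientation class of $\V_p$ in $\M_p$; this is the standard naturality of Thom classes under transverse pullback, which applies because $f_p$ is $C^1$ and transverse to $V$ by the full transversality hypothesis. Lemma~\ref{L.smfld.thin} therefore yields
\begin{equation*}
\nu(p)\ =\ [\ov\M_p]\cap \ov f_p^*u\ =\ [\ov\V_p]\ =\ [\ov\V_p]^{\rfc}\qquad\text{for all } p\in\P^*.
\end{equation*}

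\textbf{Step 4 (Uniqueness).} Both $\nu$ and the cutdown RFC are relative homology functors for $\oV\to\P$ that agree on the dense set $\P^*$, so Proposition~\ref{extLemma}(a) forces $\nu=[\ov\V_{\cdot}]^{\rfc}$, which is precisely \eqref{4.cut-down}. The main obstacle is Step~2: confirming that the cap-product construction of \S 1.5, which was formulated for absolute \v{C}ech cohomology, extends cleanly to the relative class $\ov f^*u$ with its fiberwise restrictions, and that the necessary naturality squares commute in the Borel-Moore/\v{C}ech identification available on compact metrizable spaces. Once this bookkeeping is done, the geometric content is concentrated in Step~3, which reduces to Lemma~\ref{L.smfld.thin} plus the transverse-pullback property of Thom classes.
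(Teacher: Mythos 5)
Your proposal is correct and follows essentially the same route as the paper: existence via Lemma~\ref{L.6.cut.down} and Theorem~\ref{theorem1.2}, then showing that both $[\ov\V_{\cdot}]^{\rfc}$ and the cap-product assignment are relative homology functors that agree on a common second-category set of regular parameters (via Lemma~\ref{L.smfld.thin} and the naturality of Thom classes under transverse pullback), and concluding by the uniqueness in Proposition~\ref{extLemma}(a). The only cosmetic difference is that you pull $u$ back to the total space $\ov\M$ before restricting to fibers, whereas the paper works directly with the fiberwise classes $\ov f_p^*u$; the substance is identical.
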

\begin{proof}
Lemma~\ref{L.6.cut.down} and Theorem~\ref{theorem1.2} immediately imply the  existence of the relative fundamental class \eqref{6.rfcVf}.    To establish \eqref{4.cut-down},  
 apply Lemma~\ref{lemma1.2}  to both $\ov\M$ and $\oV$  and intersect the resulting sets  \eqref{2.regularvalues} of regular values.  This yields a second category subset $\P^{**}$ of $\P$ such that, for each $p\in\P^{**}$, 
\begin{enumerate}[(i)]  
\item $\ov \M_p $ is  a metrizable thin compactification of $\M_p$.\\[-3mm]
\item $\ov{{\mathscr V}_p}$ is a metrizable thin compactification of $\V_p$. \\[-3mm]
 \item $\V_p= f_p^{-1}(V)$ is a  oriented embedded submanifold of $\M_p$ whose orientation class, as in   \eqref{A.nat.Thom}, is 
 $f_p^* u$, where $u=u_{N,V}$.
 \end{enumerate} 
 Now fix $p\in\P^{**}$\!. The naturality of the restriction map shows that $f_p^* u$ is the restriction to $\M_p$ 
 of the class \eqref{pull.back.fp}. Therefore Lemma~\ref{L.smfld.thin} applies, giving 
 \bear\label{M.cap.tau=V}
[\ov \V_p] \, =\,[\ov \M_p]  \cap  \ov f_p^*u \quad \mbox{in } \HBM_* (\ov \V_p, \Q)= \cHH_* (\ov \V_p, \Q),  
\eear
for each $p\in\P^{**}$.   To complete the proof, observe that each side of \eqref{M.cap.tau=V} is  a relative homology functor associated to $\oV \to \P$:
\begin{itemize}
\item  By Theorem~\ref{theorem1.2},  $\mu= [\ov\V]^\rfc$ is a relative homology functor  with
$$
\mu(p)= [\ov\V_p]^\rfc \in  \cHH_* (\ov \V_p, \Q) 
$$

\item    The calculation \eqref{1.capcalc} used to show that  \eqref{1.20} is relative homology functor, now using the cap product \eqref{capf*u.3} with its naturality property \eqref{A.cupnaturality1}, shows that   $\tilde\mu= [\ov\M]^\rfc \cap \ov{f}^*u$   is a relative homology functor  with
$$
\tilde\mu(p)= [\ov\M_p]^\rfc \cap \ov{f}_p^*u\in  \cHH_*(\ov \V_p, \Q) 
$$
\end{itemize}
But  \eqref{M.cap.tau=V} shows that
\best
\mu(p)=\tilde{\mu}(p)\quad \mbox { for all } p\in  \P^{**}. 
\eest
By Proposition~\ref{2.ExtensionLemma}(a), we conclude that $\mu$ and $\tilde\mu$ are equal, giving \eqref{4.cut-down}.
 \end{proof} 
 \medskip

\begin{cor} Under the assumptions of Proposition~\ref{P.6.cut.down}
\bear\label{Cor6.5eq}
\int_{[\ov \V_p]^\rfc } \iota^*\alpha \ =\  \int_{[\ov \M_p]^\rfc }  {\ov f}_p^* u  \cup \alpha  \hspace{12mm}  \forall \alpha\in H^*(\ov\M_p;\Q)
\eear
where $\iota$ is the inclusion $\ov\V_p\hookrightarrow\ov\M_p$. 
\end{cor}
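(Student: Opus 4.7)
The plan is to derive \eqref{Cor6.5eq} directly from Proposition~\ref{P.6.cut.down} via the standard associativity between cap and cup products, reducing it to a one-line computation in $\HBM_0(\ov\V_p;\Q)$.

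First, I would interpret both integrals as Kronecker pairings. Since $\ov\V_p$ and $\ov\M_p$ are compact metrizable spaces, the rational identifications $\cHH_*(-;\Q)\cong \sHH_*(-;\Q)\cong \HBM_*(-;\Q)$ of Appendix~\ref{sectionA} let one work interchangeably in any of these theories; the augmentation $\ep:\HBM_0(X;\Q)\to\Q$ then expresses each integral as $\int_\xi\omega = \ep(\xi\cap\omega)$ for a homology class $\xi$ and cohomology class $\omega$ of matching degree. Applied to the LHS of \eqref{Cor6.5eq}, and using Proposition~\ref{P.6.cut.down} to replace $[\ov\V_p]^\rfc$ by $[\ov\M_p]^\rfc\cap \ov f_p^*u$, this gives
\bear\label{Cor6.5.step1}
\int_{[\ov\V_p]^\rfc}\iota^*\alpha \;=\; \ep\bigl([\ov\V_p]^\rfc\cap \iota^*\alpha\bigr) \;=\; \ep\Bigl(\bigl([\ov\M_p]^\rfc\cap \ov f_p^*u\bigr)\cap \iota^*\alpha\Bigr).
\eear

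Next I would invoke cap--cup associativity for a relative cohomology class,
\bear\label{Cor6.5.step2}
(a\cap b)\cap \iota^*c \;=\; a\cap (b\cup c),
\eear
taking $a=[\ov\M_p]^\rfc\in\HBM_d(\ov\M_p;\Q)$, $b=\ov f_p^*u\in H^k(\ov\M_p,\ov\M_p\Setminus\ov\V_p;\Q)$ as in \eqref{pull.back.fp}, and $c=\alpha\in H^{d-k}(\ov\M_p;\Q)$. Since $b$ is a relative class, the cup product $b\cup c$ lies in $H^d(\ov\M_p,\ov\M_p\Setminus\ov\V_p;\Q)$ and the cap product $a\cap(b\cup c)$ lands precisely in $\HBM_0(\ov\V_p;\Q)$, which is the group in which \eqref{Cor6.5.step1} is already taking place. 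Applying $\ep$ and reinterpreting the right-hand side as $\int_{[\ov\M_p]^\rfc}\ov f_p^*u\cup\alpha$ yields \eqref{Cor6.5eq}.

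The only ingredient beyond Proposition~\ref{P.6.cut.down} is the associativity \eqref{Cor6.5.step2} for a relative cohomology class, which is standard in Borel--Moore theory and collected in Appendix~\ref{sectionA}; the rational isomorphisms above transport it to the \Cech setting in which the RFC lives. I do not anticipate any real obstacle beyond verifying that these natural isomorphisms commute with the cap products involved, which is already implicit in Section~6's formulation of \eqref{5.Qcap.BM} as a cap product on $\cHH_*(-;\Q)$.
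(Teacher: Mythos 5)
Your proposal is correct and is essentially identical to the paper's argument: substitute $[\ov\V_p]^\rfc = [\ov\M_p]^\rfc\cap\ov f_p^*u$ from Proposition~\ref{P.6.cut.down}, apply the cap--cup associativity for a relative class (the paper's \eqref{A.cupnaturality2}), and convert to integrals via the augmentation as in \eqref{int.ep}. No substantive differences.
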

\begin{proof} 
Applying the naturality formula   \eqref{A.cupnaturality2} and  \eqref{4.cut-down} gives
\best
[\ov \V_p]^\rfc\cap\iota^*\alpha  \ =\  ( [\ov \M_p]^\rfc\cap \ov f^*_p u)\cap \iota^*\alpha\ =\ [\ov \M_p]^\rfc\cap (\ov f^*_p u\cup \alpha ), 
\eest
which is equivalent to \eqref{Cor6.5eq} (cf.  \eqref{int.ep}).
\end{proof}

\setcounter{equation}{0}
\section{VFCs defined by implicit atlases} 
\label{section7}
\medskip

In \cite{pardon}, John Pardon defined a notion of an ``implicit atlas'' on a compact  Hausdorff  space, and used it to define a  virtual  fundamental class.
After a very brief review of Pardon's setup, we establish a basic fact, Lemma~\ref{Pardon.L} below, about his fundamental class.   This lemma,  and its rather technical proof,
are entirely within Pardon's setup, and can be regarded as a small  addendum to his paper \cite{pardon}.   Lemma~\ref{Pardon.L}  is used in Section~\ref{section8} to relate Pardon's virtual fundamental class to the relative fundamental class. 
 
 \medskip

We start with a brief review of Pardon's implicit atlas package \cite[Definition~3.1.1]{pardon}. Let $X$ be  a compact Hausdorff space.   Roughly speaking, an  {\em implicit atlas} $\A$  on $X$ with index set $A$ organizes a collection of local charts indexed by finite subsets $I=\{\alpha_1,\dots \alpha_k\}$  of $A$ (including $I=\emptyset$). Each chart consists of 
\begin{enumerate}\setlength\itemsep{4pt}
\item[(i)] a ``thickening'' space $X_I$ containing an open subset $X_I^{reg}\subseteq X_I$ that is a manifold, 
\item[(ii)] an ``obstruction space"  $E_I=\oplus_{\al\in I} E_\al$ which is a vector space of dimension $\dim E_I=\dim X_I^{reg}-d$ , and 
\item[(iii)]  ``Kuranishi maps" $s_\al:X_I\to E_\alpha$ for each $\al \in I$. 
\end{enumerate} 
One can also include finite groups $\Gamma_\alpha$ acting linearly on $E_\alpha$ for each $\al\in A$. There is additional data needed to ensure compatibility, e.g. ``footprint'' maps $\psi_{IJ}$ defined for each $I\subseteq J \subseteq A$. These are required to satisfy a host of  compatibility conditions and transversality axioms \cite[Definition~3.1.1]{pardon}.    In particular, the existence of an implicit atlas implies that $X$ is locally metrizable, and hence metrizable since $X$ is compact.

\medskip
Assuming that the implicit atlas is locally orientable in the sense of \cite[Definition~4.1.2]{pardon}, Pardon associates a virtual cochain complex  
\cite[\S4]{pardon}, and uses its homology to define a virtual fundamental class 
\bear\label{P.VFC}
[X]_A^{\mathrm {vir}} \in  \cHH^d(X; \mathfrak o_{X\; \mathrm{rel} \; \bd})^\vee
\eear
in the dual of rational \Cech cohomology  with coefficients in an orientation sheaf.  Specifically, $[X]_A^{\mathrm {vir}}$ is defined in \S 5.1    of  \cite{pardon}  as the composition 
$$
\cHH^d(X; \mathfrak o_{X\; \mathrm{rel} \; \bd}) =H^d_{\mathrm {vir}}(X\; \mathrm{rel} \; \bd;  A) \xra{s_*} 
H_{0}(E,A )\xra{[E_A]\mapsto 1}\Q.
$$

   Pardon defines \eqref{P.VFC} in the general context of an implicit atlas with boundary. 
In the special case of implicit atlases, which  suffices for our purposes,   
 the sheaf $\mathfrak{o}_{X\,\mathrm{rel}\partial}$ could   be written as $\mathfrak{o}_{X}$.
 Nevertheless, we will retain Pardon's    ``$\mathrm{rel}\,\partial$''  notation to facilitate comparison with Sections~4 and 5 of  \cite{Pd}.

\medskip

In general, a space $X$ with an  implicit atlas $A$ contains a distinguished open  subset, the regular locus $X^{reg}_\emptyset$ of $X$,  which is a  $d$-dimensional manifold (
 without boundary, but not necessarily compact).    Along the regular locus,  Pardon's virtual orientation sheaf $\mathfrak o_{X\; \mathrm{rel} \; \bd}$ is canonically identified with the orientation sheaf of $X^{reg}_\emptyset$ as a manifold \cite[Definition 4.1.3]{pardon}.  

\medskip

The regular locus $X^{reg}_\emptyset $ is an open subset of the compact set $X$, so the inclusion
\bear\label{incl.j}
j:X^{reg}_\emptyset \hookrightarrow X
\eear
induces a map in compactly supported \Cech cohomology, and hence a diagram
\bear\label{j!}
\xymatrix@C=4mm@R=4mm{
   \cHH^d_c(X;j_!j^*\mathfrak o_{X\; \mathrm{rel} \; \bd})\ar[r]  &
\cHH^d_c(X; \mathfrak o_{X\; \mathrm{rel} \; \bd}) \ar[d]^= \\
\cHH_c^d(X^{reg}_\emptyset; \mathfrak{o}_{X\; \mathrm{rel} \; \bd}) 
\ar[u]_{\cong}^{j_!}  &  \cHH^d(X; \mathfrak o_{X\; \mathrm{rel} \; \bd}) 
}
\eear
 where the horizontal map is induced by  the map of sheaves $j_!j^*\F\ra\F$, and the vertical maps are as in Lemmas A.4.7 and A.4.5 of \cite{pardon}.  
  The dual of the composition is a map 
 \bear\label{ourinclusionj}
\rho_{reg}: \cHH^d(X; \mathfrak o_{X\; \mathrm{rel} \; \bd})^\vee \ \overset{\ \ \ }\longrightarrow\,   \cHH^d_c(X^{reg}_\emptyset; \mathfrak o_{X\; \mathrm{rel} \; \bd})^\vee.
\eear

\medskip

Now assume for simplicity that the regular locus  $X^{reg}_\emptyset$ is oriented.  Then  it carries a fundamental class in rational  Steenrod homology 
$$  
[X^{reg}_\emptyset]\in  \sHH_d(X^{reg}_\emptyset,\Q).
$$ 
The natural isomorphism $\tau$ defined by   \eqref{A.tau} takes this to the fundamental class 
\bear\label{VFC.sc}
[X^{reg}_\emptyset] \in  \cHH^d_c(X^{reg}_\emptyset; \Q)^\vee
\eear
in the dual of  compactly supported rational \Cech cohomology.  
 The    orientation  also determines an   isomorphism
 \bear\label{ourinclusionjNEW}
 \cHH^d_c(X^{reg}_\emptyset; \mathfrak o_{X\; \mathrm{rel} \; \bd})^\vee \overset{\ \ \cong\ \ }\longrightarrow  \cHH^d_c(X^{reg}_\emptyset; \Q)^\vee.
\eear

\medskip

 In the special case that the regular locus is all of $X$,  $X=X^{reg}_\emptyset$ is 
 a topological  manifold, and Pardon shows that the class \eqref{P.VFC} determined by  an orientable implicit atlas is equal to the 
  usual fundamental class \eqref{VFC.sc} \cite[Lemma~5.2.6]{pardon}.  For our purposes, we need the following more general fact, whose  proof was communicated to us by Pardon.  It asserts that, for any orientable implicit atlas,  the restriction of  \eqref{P.VFC} to 
the regular locus $X^{reg}_\emptyset$  is the fundamental class \eqref{VFC.sc}.

\begin{lemma}[Pardon]
\label{Pardon.L} 
 Let  $X$ be a compact Hausdorff space with a $d$-dimensional locally orientable implicit atlas $A$.  Assume that the regular locus
 $X^{reg}_\emptyset$ is oriented.   Then  
\bear\label{4.jinduced}
\rho_{reg}[X]_A^{\mathrm {vir}}\ =\ [X^{reg}_\emptyset]
\eear
 where the righthand side corresponds to  the fundamental class \eqref{VFC.sc}  under   \eqref{ourinclusionjNEW}.  
\end{lemma}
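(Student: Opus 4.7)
The plan is to reduce the claim to Pardon's own Lemma~5.2.6 of \cite{pardon}, which handles the case when $X^{reg}_\emptyset = X$ is a topological manifold, by exhibiting a compatible restriction operation at the level of the virtual cochain complex of \cite[\S4]{pardon}. The first step is to note that the implicit atlas $A$ on $X$ restricts to an implicit atlas on any open subset $U \subseteq X^{reg}_\emptyset$ with compact closure in $X^{reg}_\emptyset$. Along $X^{reg}_\emptyset$, the distinguished chart $I = \emptyset$ already exhibits $U$ as an open subset of the manifold $X_\emptyset^{reg}$, and the atlas axioms force the higher charts $X_I^{reg}$ for $I \neq \emptyset$ to reduce (after intersecting with the regular locus) to trivial bundles over $U$ with Kuranishi sections equal to the tautological coordinate; so the restricted atlas is equivalent to the trivial one.

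The second step is naturality: Pardon's virtual cochain complex and its augmentation to $\Q$ are constructed to be functorial under open inclusions that pull atlases back to atlases. Applied to $j: X^{reg}_\emptyset \hookrightarrow X$, one obtains a commutative square of virtual cochain complexes whose dualization identifies the composition $\rho_{reg}$ of \eqref{ourinclusionj} with the corresponding VFC on $X^{reg}_\emptyset$, computed with the restricted atlas and taken in the dual of compactly supported \v{C}ech cohomology. The $j_!$ appearing in \eqref{j!} corresponds exactly to the extension-by-zero encoded in the sheaf-theoretic formulation, and the vertical isomorphism in \eqref{j!} reflects the fact that $\mathfrak{o}_{X\,\mathrm{rel}\,\partial}$ restricts canonically to the manifold orientation sheaf along $X^{reg}_\emptyset$ \cite[Definition~4.1.3]{pardon}.

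The third step applies Pardon's Lemma~5.2.6 of \cite{pardon} to $X^{reg}_\emptyset$ equipped with this trivial restricted atlas. Since that lemma identifies the VFC of a manifold-with-trivial-atlas with the usual manifold fundamental class in the dual of compactly supported \v{C}ech cohomology, combining it with the naturality of step two yields
\[
\rho_{reg}[X]_A^{\mathrm{vir}} \;=\; [X^{reg}_\emptyset]
\]
in $\cHH_c^d(X^{reg}_\emptyset; \mathfrak{o}_{X\,\mathrm{rel}\,\partial})^\vee$, which is \eqref{4.jinduced} after the orientation identification \eqref{ourinclusionjNEW}.

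The main obstacle is the second step: carefully establishing the naturality of Pardon's VFC under the open inclusion $j$, and matching the resulting restriction map with the specific $\rho_{reg}$ defined via $j_!$ in \eqref{j!}. This requires unpacking the isomorphisms from \cite[Appendix~A]{pardon} relating $\cHH^d_c(X^{reg}_\emptyset; \mathfrak{o})$ to $\cHH^d_c(X; j_! j^*\mathfrak{o})$, and verifying that Pardon's virtual augmentation $s_*$ is compatible with the extension-by-zero construction. A subsidiary obstacle is that $X^{reg}_\emptyset$ need not be compact, so one cannot literally apply Pardon's VFC construction to it; instead one must work through exhaustions by precompact opens and use the continuity of compactly supported cohomology to pass to the limit.
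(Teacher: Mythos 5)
Your overall strategy --- restrict the atlas to the regular locus, invoke Pardon's Lemma~5.2.6 there, and transport the answer back via a naturality statement --- is different from the paper's argument, and the step you yourself flag as ``the main obstacle'' is in fact the entire content of the lemma, not a technical detail to be unpacked later. Pardon's VFC and Lemma~5.2.6 are only defined/stated for \emph{compact} Hausdorff spaces with implicit atlases, and there is no off-the-shelf functoriality of $[{\cdot}]^{\mathrm{vir}}$ under open inclusions that pull atlases back: $X^{reg}_\emptyset$ is a non-compact open subset, a precompact open $U\subseteq X^{reg}_\emptyset$ has compact closure that is generally not a manifold (let alone one carrying an implicit atlas with boundary), and so neither Lemma~5.2.6 nor the VFC construction applies to the exhausting pieces without substantial additional work. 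Even granting a VFC for the restricted atlas, you would still have to prove that it agrees with $\rho_{reg}[X]^{\mathrm{vir}}_A$, where $\rho_{reg}$ is the specific map built from $j_!j^*\mathfrak o\to\mathfrak o$ in \eqref{j!}; your proposal asserts this compatibility rather than establishing it. The claim that the higher charts ``reduce to trivial bundles with tautological Kuranishi sections'' over the regular locus is also stronger than what the atlas axioms give directly (they give local product structure via transversality), though this is a secondary issue.

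The paper's proof sidesteps all of this by never leaving Pardon's homotopy colimit. The key observation is that for $I=J=\emptyset$ one has $V_\emptyset=X^{reg}_\emptyset$ and, by Pardon's (4.3.9) and (4.3.11), the group $\cHH^d_c(X^{reg}_\emptyset;\mathfrak o_{X\,\mathrm{rel}\,\partial})$ \emph{is} the $(\emptyset,\emptyset)$-term $H^d_{\mathrm{vir}}(X\,\mathrm{rel}\,\partial;A)_{\emptyset\emptyset}$ of the colimit computing virtual cohomology, and the composition in \eqref{j!} is dual to the corresponding structure map. Hence $\rho_{reg}[X]^{\mathrm{vir}}_A$ is literally the composition \eqref{7.comp.xx} through $(s_{\emptyset\emptyset})_*$, which is then evaluated at the chain level: $X_{\emptyset\emptyset A}=E_A\times\{0\}\times X^{reg}_\emptyset$ as in \eqref{X.00A}, the map $s_{\emptyset\emptyset}$ is projection onto $E_A$, and a K\"unneth decomposition identifies the composite with the augmentation on $H_0(X^{reg}_\emptyset)$, i.e.\ with the fundamental class \eqref{VFC.sc}. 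If you want to salvage your approach, the honest version of your ``step two'' amounts to reproving exactly this identification, at which point the detour through Lemma~5.2.6 and the exhaustion argument is unnecessary.
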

\begin{proof} 
The proof is an exercise understanding the naturality of the  map \eqref{ourinclusionj} in  Pardon's VFC package.   Before starting,  note that  metrizable spaces are  paracompact, and that we can assume that  the atlas $A$ is finite (cf. Section~5.1 of \cite{pardon}).

In the context of Section 4.3 of \cite{pardon}, we have sets $V_I$ defined by 
 $V_I= \psi_{\emptyset I}\big(s_I^{-1}(0)\cap X_I^{reg}\big)$  associated to each subset $I$ of $A$.    In the special case $I=\emptyset$,  axioms (iv) and (vii) in  the definition of the implicit atlas \cite[Definition 3.1.2]{pardon} imply that
  $s_\emptyset=0$ and $\psi_{\emptyset,\emptyset}=id$, and hence  $V_\emptyset= X_\emptyset^{reg}$.  Taking  $K=X$ and $I=J=\emptyset$, we have
\bear\label{H.vir=H.c}
H^d_{\mathrm{vir}}(X\; \mathrm{rel}\; \bd, A)_{\emptyset\emptyset} \ =\ 
\cHH^d_c( X_\emptyset^{reg}; \mathfrak o_{X\; \mathrm{rel} \; \bd} )
\eear
by  \cite[(4.3.9)]{pardon}.   Furthermore,   the map $j:V_I\cap V_J\to X$ used in Section~4.3 of \cite{pardon} is equal to \eqref{incl.j} in the case $I=J=\emptyset$, and hence 
\best\label{H.vir=H.j}
H^d_{\mathrm{vir}}(X\; \mathrm{rel}\; \bd, A)_{\emptyset\emptyset}= \cHH^d(X;j_!j^*\mathfrak o_{X\; \mathrm{rel} \; \bd})
\eest
by  \cite[(4.3.11)]{pardon}.
We then have a commutative diagram
\begin{align}\label{X.P.diag}
\begin{gathered}
\xymatrix@C=4mm@R=4mm{
  \cHH^d(X; \mathfrak o_{X\; \mathrm{rel} \; \bd}) \ar@{=}[r]& 
H^d_{\mathrm {vir}}(X \mathrm{rel}\; \bd;  A)\ar[rr]^{\quad s_*} &
&H_{0}( E;A)\ar[rr]^{\quad [E_A] \mapsto 1}&&\Q   
\\
\cHH_c^d(X^{reg}_\emptyset; \mathfrak{o}_{X\; \mathrm{rel} \; \bd})  \ar[u] \ar@{=}[r]& 
H^d_{\mathrm {vir}}(X \mathrm{rel}\; \bd;  A)_{\emptyset,\emptyset} \ar[u]\ar@{-->}[rru]_(.6){(s_{\emptyset\emptyset})_*}     &
&  && 
}
\end{gathered}
\end{align}
where the   first vertical arrow  is from \eqref{j!}, and the second vertical arrow  is induced by the maps (4.2.8) and (4.2.10) in  \cite{pardon}.    The square in \eqref{X.P.diag} is commutative by the constructions in Section~4.3 of \cite{pardon} (the top  isomorphism is the homotopy colimit of the isomorphisms  (4.3.11), which include the case $I=J=\emptyset$). Diagram \eqref{X.P.diag} shows that  $ \rho_{reg}[X]^{vir}_A$ is  the composition
\bear\label{7.comp.xx}
\begin{tikzcd}[column sep=large]
\cHH_c^d(X^{reg}_\emptyset;  \mathfrak{o}_{X\; \mathrm{rel} \; \bd}) =H^d_{\mathrm {vir}}(X \mathrm{rel} \;\bd;  A)_{\emptyset,\emptyset}   \arrow[r, dashrightarrow, "{(s_{\emptyset \emptyset})_*} "]
&H_0 (E, A)\arrow[r,"{[E_A]\mapsto 1}"] & \Q.  
\end{tikzcd}
\eear

The proof is completed by showing that the composition \eqref{7.comp.xx} is the fundamental class  \eqref{VFC.sc}. We do this by looking at the chain level, using (4.2.3), (4.2.4), (4.2.12) in \cite{pardon} as follows.   In the case $I=J=\emptyset$, Definition~4.2.1 of \cite{pardon} reduces to  
\bear\label{X.00A}
X_{\emptyset\emptyset A}= E_A \ti \{0\}\ti X_\emptyset^{reg},
\eear  
which we identify with  $E_A \ti X_\emptyset^{reg}$. This is an  orientable $d+\dim E_A$  dimensional manifold.    The space $X_{I,J,A}$ in Definition~4.2.1 comes with a $\Gamma_J$ action, which is implicitly  extended to  an action of  $\Gamma_A$ (by the canonical map $\Gamma_A\to\Gamma_J$).  In particular, $\Gamma_A$ acts trivially on  the factor $X_\emptyset^{reg}$ of \eqref{X.00A}.    Similarly, for each $K \subseteq X$, Definition~4.2.1 of \cite{pardon} gives
\best
X_{\emptyset\emptyset A}^K = \{0\} \ti (K\cap X_\emptyset^{reg}),
\eest 
which we similarly identify with $K\cap X_\emptyset^{reg}$. 
In particular, taking $K=X$ gives the identification
\best
C_{\dim E_A}(X_{\emptyset\emptyset A}, X_{\emptyset\emptyset A}\Setminus X_{\emptyset}^{reg},\mathfrak{o}_{E_A}^\vee)^{\Gamma_A}= C_{\dim E_A}( E_A \ti X_\emptyset^{reg}, (E_A \Setminus 0)\ti X_\emptyset^{reg}, \mathfrak{o}_{E_A}^\vee)^{\Gamma_A}.
\eest

\smallskip

Next, as in   \cite[(4.2.7), (4.2.12)]{pardon}, the map    $s_{\emptyset \emptyset *}$ is  defined as the composition around the square
\best
\xymatrix@C=4mm@R=3mm{
C^d_{\mathrm {vir}}(X \mathrm{rel} \bd;  A)_{\emptyset,\emptyset} \ar[r]^{s_{\emptyset \emptyset*}}&
C_0 (E, A). \\
C_{\dim E_A}(X_{\emptyset\emptyset A}, X_{\emptyset\emptyset A}\Setminus  X_\emptyset^{reg}, \mathfrak{o}_{E_A}^\vee)^{\Gamma_A} \;
\ar@{=}[u]\ar[r]&\;
  C_{\dim E_A}(E_A, E_A \Setminus 0; \mathfrak{o}_{E_A}^\vee)^{\Gamma_A}.
  \ar@{=}[u]
  \\
}
\eest
Here, the vertical arrows are the identifications (4.2.4) and  (4.2.3) in \cite{pardon}, respectively, and the bottom map is induced by the  projection $X_{\emptyset\emptyset A} \to E_A$ onto the first factor of \eqref{X.00A}.    Passing to homology 
and using 
\eqref{H.vir=H.c} gives a commutative diagram 
\best
\xymatrix@C=4mm@R=3mm{
\cHH_c^d(X^{reg}_\emptyset; \mathfrak{o}_{X\; \mathrm{rel} \; \bd})\ar@{=}[r] \ar@{-->}[dr] &\cHH^d_{\mathrm {vir}}(X \mathrm{rel} \bd;  A)_{\emptyset,\emptyset} \ar[r]^{s_{\emptyset \emptyset*}}&
\cHH_0 (E, A)
\ar[r]^{[E_A]\mapsto1}& \Q. \\
&\cHH_{\dim E_A}( X_{\emptyset\emptyset A}, X_{\emptyset\emptyset A}\Setminus X_\emptyset^{reg}; \mathfrak{o}_{E_A}^\vee)
\ar@{=}[u]\ar[r]&
  \cHH_{\dim E_A}(E_A, E_A \Setminus 0; \mathfrak{o}_{E_A}^\vee)
  \ar@{=}[u] \ar@{-->}[ur]
}
\eest
whose top row is \eqref{7.comp.xx}. The first dashed arrow is Poincar\'{e}-Lefschetz duality in the form \cite[(A.6.6)]{pardon} (Pardon's equations (4.3.3)-(4.3.11), used above to obtain \eqref{H.vir=H.c}, reduce to this for $I=J=\emptyset$). Moreover, by \eqref{X.00A} and Kunneth decomposition, the lower left group is $ H_{\dim E_A} (E_A, E_A\Setminus 0) \otimes H_0(X^{reg}_\emptyset)$, and the homology map induces by $X_{\emptyset\emptyset A} \to E_A$ is ${\mathrm id} \otimes \ep$ where $\ep: H_0(X^{reg}_\emptyset)\ra \Q$ is the augmentation. 
It follows that the composition 
\best
\cHH_c^d(X^{reg}_\emptyset;\mathfrak{o}_{X\; \mathrm{rel} \; \bd}) \ra \Q
\eest
along the bottom of the diagram is equal to the fundamental class \eqref{VFC.sc} after trivializing the orientation sheaf  over $X^{reg}$.
\end{proof}

\setcounter{equation}{0}
\section{Relating relative and Virtual Fundamental classes }
\label{section8}
\medskip

  In this final section, we consider spaces  that are thin compactifications  {\em and} admit an  orientable implicit atlas.  Such a space $\ov{M}$ is compact, metrizable, and has both the fundamental class $[\ov M]$ defined in Section~1, and the Pardon virtual fundamental class $[\ov M]^{\mathrm{vir}}_A$ as in  \eqref{P.VFC}.  We give conditions under which these  correspond,  first for a single compact space in Proposition~\ref{Prop5.1}, and then for families in Theorem~\ref{5.maintheorem}.

  We will work with  rational coefficients, moving between Steenrod homology and dual \Cech cohomology using  the natural transformation 
\bear\label{5.tau}
\tau:  \sHH_k(X;\Q) \ma\longrightarrow^{\cong}  \cHH^{k}_c(X;\Q)^\vee
 \eear
(see \eqref{A.tau}) which is an isomorphism for any paracompact, locally compact space $X$. 
 Furthermore, if  $X$ admits an orientable implicit atlas $A$,  an orientation of $A$ (i.e. a nowhere vanishing section of $\mathfrak o_{X\; \mathrm{rel} \; \bd}$) trivializes the  orientation sheave , giving an identification
\bear\label{5.triv.or}
\cHH^{k}(X; \mathfrak o_{X\; \mathrm{rel} \; \bd})^\vee =     \cHH^{k}(X;\Q)^\vee.
\eear
  The orientation on $A$   also orients the regular locus $\ov{M}^{\mathrm{reg}}_\emptyset$. 

\smallskip

\begin{prop}\label{Prop5.1} 
Let  $M$ be an oriented manifold.  If a thin compactification $\ov{M}$ of $M$  admits
an  oriented implicit atlas $A$ such that $M \subseteq \ov{M}^{\mathrm{reg}}_\emptyset$  as oriented manifolds of same dimension $d$, then the fundamental class \eqref{1.fc}   corresponds  to Pardon's virtual fundamental class: 
\best
[\ov{M}] \, =\, [\ov{M}]_A^{\mathrm {vir}}
\eest
under the isomorphisms \eqref{5.tau}  and \eqref{5.triv.or} for $X=\ov M$.
\end{prop}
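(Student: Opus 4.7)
The plan is to reduce the identity $[\ov M] = [\ov M]^{\mathrm{vir}}_A$ to an equality of restrictions to the open submanifold $M$, using that the thin compactification hypothesis makes such restriction an isomorphism. All classes are taken with $\Q$ coefficients, transported to Steenrod homology via the natural isomorphism $\tau$ of~\eqref{5.tau} together with the trivialization~\eqref{5.triv.or} coming from the chosen orientation of $A$.

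First, because $\ov M$ is a thin compactification of $M$, the sequence~\eqref{YYFrseq} yields an isomorphism
$$\rho_M: \sHH_d(\ov M;\Q) \xra{\cong} \sHH_d(M;\Q).$$
It therefore suffices to check that $\rho_M[\ov M] = \rho_M[\ov M]^{\mathrm{vir}}_A$ in $\sHH_d(M;\Q)$. For the Steenrod fundamental class, $\rho_M[\ov M] = [M]$ holds by the defining equation~\eqref{1.fc}.

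For Pardon's class, factor $\rho_M$ through the intermediate open submanifold $\ov M^{\mathrm{reg}}_\emptyset$, which by hypothesis contains $M$ as an open subset of the same dimension with compatible orientation:
$$\sHH_d(\ov M;\Q)\ \xra{\rho_{\mathrm{reg}}}\ \sHH_d(\ov M^{\mathrm{reg}}_\emptyset;\Q)\ \xra{\rho'}\ \sHH_d(M;\Q).$$
By Lemma~\ref{Pardon.L} and the naturality of $\tau$ with respect to open inclusions, the first arrow sends $[\ov M]^{\mathrm{vir}}_A$ to the Steenrod fundamental class $[\ov M^{\mathrm{reg}}_\emptyset]$. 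Since $M$ is an open submanifold of $\ov M^{\mathrm{reg}}_\emptyset$ of the same dimension with the induced orientation, the second arrow sends $[\ov M^{\mathrm{reg}}_\emptyset]$ to $[M]$ by property~\eqref{5.2MN}. Combining, $\rho_M[\ov M]^{\mathrm{vir}}_A = [M] = \rho_M[\ov M]$, and injectivity of $\rho_M$ completes the proof.

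The main point requiring care is the identification, under $\tau$ and~\eqref{5.triv.or}, of Pardon's map $\rho_{\mathrm{reg}}$ in~\eqref{ourinclusionj}---which is defined as the dual of the composition involving $j_!$ on compactly supported \v{C}ech cohomology and the sheaf adjunction $j_!j^*\mathcal F \to \mathcal F$---with the classical restriction in Steenrod homology associated to the open inclusion $\ov M^{\mathrm{reg}}_\emptyset \hookrightarrow \ov M$. This amounts to a naturality statement for the Poincar\'e--Lefschetz-type duality $\tau$ with respect to open inclusions of paracompact, locally compact spaces; once it is verified, the chain of restrictions composes straightforwardly and the rest of the argument is immediate from Lemma~\ref{Pardon.L} and~\eqref{5.2MN}.
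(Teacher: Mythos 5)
Your proposal is correct and follows essentially the same route as the paper: restrict through the chain $M\subseteq \ov{M}^{reg}_\emptyset\subseteq\ov{M}$, apply Lemma~\ref{Pardon.L} to identify $\rho_{reg}[\ov{M}]^{\mathrm{vir}}_A$ with $[\ov{M}^{reg}_\emptyset]$, use \eqref{1.fc} and \eqref{5.2MN}, and conclude via the injectivity of the isomorphism \eqref{YYFrseq}. The point you flag as requiring care -- matching Pardon's map \eqref{ourinclusionj} with the Steenrod restriction under $\tau$ and \eqref{5.triv.or} -- is exactly the identification the paper makes when transporting its commutative triangle of restriction maps to dual \v{C}ech cohomology.
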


\begin{proof}  In this situation, there are inclusions $M\subseteq \ov{M}^{reg}_\emptyset \subseteq \ov{M}$ of open subsets, where $M$ and  $\ov{M}^{reg}_\emptyset$ are oriented paracompact $d$-manifolds, and $\ov{M}$ is compact. The corresponding Steenrod restriction maps \eqref{1.rhoU} induce a commutative diagram
$$
\xymatrix@=4mm{
& \sHH_d(\ov{M}^{reg}_\emptyset ; \Q) \ar[rd]^{\rho_M} & \\
\sHH_d(\ov{M};\Q) \ar[rr]^{\cong}_{\rho} \ar[ur]^{ \rho_{reg}} & & \sHH_d(M;\Q),\\
}
$$
where the bottom arrow is the isomorphism \eqref{YYFrseq}.  This maps via \eqref{5.tau} to  a corresponding diagram in the dual of rational \Cech cohomology,  in which  $\rho_{reg}$ is replaced by the \Cech map
$$
\rho_{reg}: \cHH^d(\ov{M}; \Q)^\vee \ \overset{\ \ \ }\longrightarrow\,   \cHH^d_c(\ov{M}^{reg}_\emptyset; \Q)^\vee,
$$
induced by the inclusion $\ov{M}^{reg}_\emptyset  \hookrightarrow \ov{M}$,     and thus corresponding to  \eqref{ourinclusionj}  with $X=\ov{M}$ under  the coefficient trivialization \eqref{5.triv.or}.
With these identifications,  the statement of Lemma~\ref{Pardon.L}  becomes
   \bear\label{ourinclusion8}
{\rho_{reg}}[\ov{M}]^\mathrm{vir}_{A}\ =\ [\ov{M}^{reg}_\emptyset] \qquad\mbox{in $\cHH^d_c(\ov{M}^{reg}_\emptyset; \Q)^\vee$}.
\eear

Successively applying  equations \eqref{1.fc},  \eqref{5.2MN} and  \eqref{ourinclusion8}, and then using the commutativity of the diagram, we obtain
$$
\rho[\ov{M}]\,=\, [M]\,=\, \rho_M[\ov{M}^{reg}_\emptyset]\,=\,\rho_M  \big({\rho_{reg}}[\ov{M}]^\mathrm{vir}_{A}\big)\,=\, \rho[\ov{M}]^\mathrm{vir}_{A}.
$$
The proposition follows because $\rho$ is an isomorphism. 
\end{proof} 

\medskip

 For  applications, one  needs a version of Proposition~\ref{Prop5.1} for families.  While Pardon does not explicitly describe implicit atlases for families, his applications (e.g. \cite[\S 9.3]{pardon}) show that  it is reasonable to consider  proper continuous maps
\bear
\label{5.Xfamily}
\xymatrix{
\X\ar[d]\\
\P
}
\eear
from a Hausdorff space to a Banach manifold that satisfy two conditions:
\smallskip
\begin{itemize}\setlength\itemsep{4pt}
\item[{\bf IA.1.}] Every fiber $\X_p$ of \eqref{5.Xfamily} admits  an oriented implicit atlas $A_p$. 
\item[{\bf IA.2.}]  For every path $\gamma$ in $\P$ from $p$ to $q$, there is an  oriented   implicit atlas  with boundary on $\X_\gamma$ which restricts to the chosen oriented implicit atlas on $\X_p$ and $\X_q$.
\end{itemize}
\smallskip
By (IA.1), each fiber has a virtual fundamental class 
$$   
[\X_p]^\mathrm{vir}_{A_p} \in \cHH^d(\X_p;\Q)^\vee,
$$
while  (IA.2) implies that  
$$ 
[\X_p]^\mathrm{vir}_{A_p} = [\X_q]^\mathrm{vir}_{A_q} \qquad \mbox{in}\quad     \cHH^d(\X_\gamma;\Q)^\vee
$$
(cf.  the proof of Lemma 9.3.2 of \cite{pardon}).   Applying Proposition~\ref{extLemma} we obtain:

\begin{lemma}
\label{5.extension}
For a family \eqref{5.Xfamily} that satisfies conditions (IA.1) and (IA.2), the association $p \mapsto [\X_p]^\mathrm{vir}_{A_p} $  extends uniquely to a relative homology functor $\mu^\mathrm{vir}_A  $.
\end{lemma}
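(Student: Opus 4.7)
The plan is to apply the extension part (b) of Proposition~\ref{extLemma} to the assignment $p \mapsto [\X_p]^{\mathrm{vir}}_{A_p}$ taking the dense subset to be all of $\P^* = \P$. Since $\ov\pi$ is proper and $\P$ is Hausdorff, every fiber $\X_p$ (and every fiber product $\X_\gamma$ over a compact base) is a compact Hausdorff space. By (IA.1), the implicit atlas $A_p$ is orientable and oriented, so the coefficient sheaf $\mathfrak o_{\X_p\,\mathrm{rel}\,\bd}$ is canonically trivialized; the isomorphism \eqref{5.tau} combined with \eqref{5.triv.or} then lets us regard each virtual class $[\X_p]^{\mathrm{vir}}_{A_p}$ as an element of the rational \Cech homology $\cHH_d(\X_p;\Q)$, which is the correct target for Proposition~\ref{extLemma}. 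The hypotheses that $\P$ be locally path-connected, metrizable, and Baire are automatic because $\P$ is a Banach manifold.

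The substantive step is to verify the consistency condition \eqref{2.consistent}. Given any continuous path $\gamma:[0,1]\to\P$ from $p$ to $q$, hypothesis (IA.2) equips the compact fiber product $\X_\gamma$ with an oriented implicit atlas with boundary that restricts on the two boundary fibers to $A_p$ and $A_q$. Pardon's construction (\cite[\S5.2]{pardon}) then produces a relative virtual class on $\X_\gamma$ whose boundary, under the canonical identification $\partial \X_\gamma \cong \X_p \sqcup \X_q$ of oriented manifolds, equals $[\X_p]^{\mathrm{vir}}_{A_p} - [\X_q]^{\mathrm{vir}}_{A_q}$. Pushing forward by the inclusion $\partial \X_\gamma \hookrightarrow \X_\gamma$ and using that the composition $\partial \X_\gamma \hookrightarrow \X_\gamma$ kills boundaries in the appropriate long exact sequence (as in the cobordism invariance argument of \cite[Lemma~9.3.2]{pardon}), one obtains the desired equality
\begin{equation*}
(\iota_0)_* [\X_p]^{\mathrm{vir}}_{A_p} \;=\; (\iota_1)_* [\X_q]^{\mathrm{vir}}_{A_q} \qquad \text{in } \cHH_d(\X_\gamma;\Q).
\end{equation*}

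The main obstacle is verifying that Pardon's cobordism invariance, which is originally phrased as a statement about functionals on Čech cohomology with boundary, translates faithfully to the Čech homology formulation demanded by \eqref{2.consistent}. This amounts to tracking the duality \eqref{5.tau} and the orientation trivializations \eqref{5.triv.or} through the restriction maps induced by $\X_p, \X_q \hookrightarrow \X_\gamma$, and checking compatibility of Pardon's boundary operator with the boundary homomorphism in the exact sequence for the pair $(\X_\gamma, \partial \X_\gamma)$. Once this is in place, the consistency condition holds for \emph{every} path $\gamma$ from $p$ to $q$, hence trivially for a $C^0$-dense subset of such paths, and Proposition~\ref{extLemma}(b) applies to yield a unique relative homology functor $\mu^{\mathrm{vir}}_A$ extending $p \mapsto [\X_p]^{\mathrm{vir}}_{A_p}$.
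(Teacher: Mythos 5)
Your proposal is correct and follows essentially the same route as the paper: the paper likewise notes that (IA.1) gives each fiber a virtual class, cites Pardon's Lemma~9.3.2 together with (IA.2) for the consistency condition $[\X_p]^{\mathrm{vir}}_{A_p} = [\X_q]^{\mathrm{vir}}_{A_q}$ in $\cHH^d(\X_\gamma;\Q)^\vee$ along every path, and then invokes Proposition~\ref{extLemma}. The bookkeeping you flag (translating Pardon's dual-of-\Cech-cohomology classes into rational \Cech homology via \eqref{5.tau} and \eqref{5.triv.or}) is handled in the paper by the natural isomorphisms of Lemma~\ref{A.taulemma}, and your level of detail matches or exceeds the paper's own treatment.
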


\medskip

To extend Proposition~\ref{Prop5.1} to families,  we  again consider a relatively oriented Fredholm family \eqref{2.1} that admits a  metrizable, relatively thin compactification 
\bear
\label{5.lastM}
\xymatrix{
\ov\M \ar[d]^{\ov\pi} \\
\P.
}
\eear
We also assume that $\ov\M$  satisfies (IA.1) and (IA.2) above, with an inclusion
\bear\label{5.MinMreg}
\M_p\subseteq ({\ov\M_p})^{reg}_\emptyset \quad\mbox{ for all $p\in \P$}. 
\eear
In this case, both the relative fundamental class $[\ov\M]^\rfc$  and Pardon's virtual fundamental class $[\ov\M]^\mathrm{vir}_A$ are  defined, and both are relative homology functors.   The following theorem shows that they are equal.

\begin{theorem} 
\label{5.maintheorem}
Suppose that  a  metrizable, relatively thin compactification $\ov\pi:\ov \M\ra \P$  of a relatively oriented Fredholm family satisfies (IA.1), (IA.2) and \eqref{5.MinMreg}.   Then  the relative fundamental  class and  Pardon's virtual fundamental class are equal  as relative homology functors.  In    particular, 
\bear
\label{5.ssvfc}
[\ov\M_p]^\rfc= [\ov\M_p]^\mathrm{vir}_{A_p} 
\eear 
for all $p\in\P$.
\end{theorem}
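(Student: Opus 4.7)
The plan is to reduce the theorem to a fiberwise application of Proposition~\ref{Prop5.1} together with the uniqueness clause of Proposition~\ref{extLemma}. First I would record that both sides of \eqref{5.ssvfc} are values of relative homology functors of the same degree $d$ associated to $\ov\pi:\ov\M\to\P$: the class $[\ov\M_\phi]^\rfc$ furnishes $\mu^\rfc$ by Theorem~\ref{theorem1.2}, while condition (IA.2) together with Lemma~\ref{5.extension} produces $\mu^{\mathrm{vir}}_A$ from the fiberwise Pardon classes. Since $\P$ is a separable Banach manifold, it is locally path-connected, metrizable and Baire, so Proposition~\ref{extLemma}(a) will apply once we pin the two functors down on a dense subset of $\P$.

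The natural candidate for that dense subset is the second-category set $\P^*\subseteq\P$ of regular values defined in \eqref{2.regularvalues}. For each $p\in\P^*$, Lemma~\ref{lemma1.2} shows that $\M_p$ is an oriented $d$-dimensional manifold (with the orientation induced from the relative orientation of $\pi$) and that $\ov\M_p$ is a thin compactification of $\M_p$. Hypothesis \eqref{5.MinMreg} gives the inclusion $\M_p\subseteq(\ov\M_p)^{reg}_\emptyset$; combined with the fact that the regular locus of a $d$-dimensional oriented implicit atlas is itself an oriented $d$-manifold, this puts us exactly in the situation of Proposition~\ref{Prop5.1} (applied to $\ov M=\ov\M_p$ with the implicit atlas $A_p$).

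Proposition~\ref{Prop5.1} then yields
\begin{equation*}
[\ov\M_p] \;=\; [\ov\M_p]^{\mathrm{vir}}_{A_p}
\end{equation*}
under the identifications \eqref{5.tau} and \eqref{5.triv.or}, for every $p\in\P^*$. Invoking the normalization axiom (A2) for the RFC gives $\mu^\rfc(p)=[\ov\M_p]^\rfc=[\ov\M_p]$, so $\mu^\rfc(p)=\mu^{\mathrm{vir}}_A(p)$ on the dense set $\P^*$. The uniqueness part of Proposition~\ref{extLemma}(a) then forces $\mu^\rfc=\mu^{\mathrm{vir}}_A$ as functors on $\KP$, which is precisely the desired conclusion; evaluating on inclusions of points recovers \eqref{5.ssvfc} for all $p\in\P$.

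The main obstacle I expect is not the homological bookkeeping but the orientation compatibility implicit in the inclusion $\M_p\hookrightarrow(\ov\M_p)^{reg}_\emptyset$: one must verify that the orientation on $\M_p$ coming from the Fredholm relative orientation of $\pi$ and the orientation on $(\ov\M_p)^{reg}_\emptyset$ coming from the chosen orientation of the atlas $A_p$ agree on their common open set. This is a compatibility condition that either has to be absorbed into the meaning of (IA.1)--(IA.2) or checked in each application; once it is granted, the hypotheses of Proposition~\ref{Prop5.1} are met at every $p\in\P^*$ and the argument above goes through without further complication.
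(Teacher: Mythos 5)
Your proposal is correct and follows essentially the same route as the paper: apply Proposition~\ref{Prop5.1} fiberwise over the dense set $\P^*$ of regular values (where Lemma~\ref{lemma1.2} supplies the thin compactification structure and \eqref{5.MinMreg} supplies the inclusion into the regular locus), then conclude by the uniqueness clause of Proposition~\ref{extLemma}(a). Your closing remark about orientation compatibility is well taken, but it is already absorbed into the hypothesis of Proposition~\ref{Prop5.1}, which requires $M\subseteq \ov{M}^{\mathrm{reg}}_\emptyset$ \emph{as oriented manifolds}.
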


\begin{proof} 
For each regular value $p$ of $\ov\pi$, $\ov\M_p$ is a  metrizable, relatively thin compactification of $\M_p$, and has an implicit atlas $A_p$  with  $\M_p\subset (\ov\M_p)^{reg}_\emptyset$.  Proposition~\ref{Prop5.1}  then shows that   $[\ov\M_p]^\rfc = [\ov\M_p]= [\ov\M_p]^\mathrm{vir}_{A_p}$.  But the set of regular values of $\ov\pi$ is dense in $\P$,  so Proposition~\ref{extLemma}(a) gives \eqref{5.ssvfc}.  
 \end{proof}

\begin{cor} 
\label{5.lastCor} Assume $\ov\pi:\ov \M\ra \P$ satisfies the assumptions of Theorem~\ref{5.maintheorem} and $\ov f:\ov \M\ra Y$ a continuous map. Then for each class $\beta\in \cHH^d(Y;\Q)$ the function 
\bear
\label{5.invariant}
I_\beta(p)\ =\ \lg f_*[\ov{\M}_p]^\rfc, \; \beta\rg \ =\ \lg f_*[\ov{\M}]^{\mathrm{vir}}_{A_p},\beta\rg 
\eear
is independent of $p$ on each path-connected component of $\P$.
 \end{cor}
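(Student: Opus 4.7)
The corollary packages Theorem~\ref{5.maintheorem} together with the naturality of the RFC already established in Corollary~\ref{1.invtcor}. The second equality in \eqref{5.invariant} is pointwise: Theorem~\ref{5.maintheorem} gives
\[
[\ov\M_p]^\rfc \;=\; [\ov\M_p]^{\mathrm{vir}}_{A_p} \quad \text{in } \cHH_d(\ov\M_p;\Q)
\]
for every $p\in\P$ (under the identifications \eqref{5.tau} and \eqref{5.triv.or}), so pushing forward by $\ov f|_{\ov\M_p}$ and pairing with $\beta$ produces the same number on each side.

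To show that $I_\beta$ is constant on each path-connected component of $\P$, I would apply the argument of Corollary~\ref{1.invtcor} to the relative homology functor $\mu = [\ov\M]^\rfc$ furnished by Theorem~\ref{theorem1.2}. Given any $p,q$ in the same component, choose a continuous path $\gamma:[0,1]\to\P$ from $p$ to $q$, viewed as an object of ${\cal K}_\P$. The consistency relation \eqref{2.consistent}, which is a special case of naturality Axiom~A1, yields
\[
(\widehat\iota_p)_*[\ov\M_p]^\rfc \;=\; (\widehat\iota_q)_*[\ov\M_q]^\rfc \quad \text{in } \cHH_d(\ov\M_\gamma;\Q),
\]
where $\widehat\iota_p,\widehat\iota_q$ denote the inclusions of the two endpoint fibers into $\ov\M_\gamma$. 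Pushing this equality further by $\ov f\circ\widehat\gamma:\ov\M_\gamma\to Y$ exhibits a common preimage in $\cHH_d(Y;\Q)$ of $\ov f_*[\ov\M_p]^\rfc$ and $\ov f_*[\ov\M_q]^\rfc$, and pairing with $\beta\in\cHH^d(Y;\Q)$ then gives $I_\beta(p)=I_\beta(q)$.

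I do not expect any substantive obstacle. The only minor point to flag is that Corollary~\ref{1.invtcor} was stated for $\beta\in\cHH^*(Z;\Z)$, whereas here the natural coefficient ring is $\Q$ (dictated by Pardon's construction and the coefficient passage in \eqref{5.tau}--\eqref{5.triv.or}); however, the argument is purely formal, relying only on the naturality of pushforwards and of the Kronecker pairing, and transfers verbatim to $\Q$-coefficients.
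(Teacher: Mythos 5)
Your proposal is correct and follows essentially the same route as the paper: the second equality is the pointwise identity $[\ov\M_p]^\rfc=[\ov\M_p]^{\mathrm{vir}}_{A_p}$ from Theorem~\ref{5.maintheorem}, and the constancy on path components is exactly the argument of Corollary~\ref{1.invtcor} (via the consistency condition \eqref{2.consistent} pushed forward to $Y$), which the paper simply cites as \eqref{1.invts}. The only wording quibble is that pushing forward by $\ov f\circ\widehat\gamma$ exhibits $\ov f_*[\ov\M_p]^\rfc$ and $\ov f_*[\ov\M_q]^\rfc$ as equal in $\cHH_d(Y;\Q)$ (a common image, not a ``common preimage''), and your remark about transferring Corollary~\ref{1.invtcor} to $\Q$-coefficients is a fair and harmless observation.
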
 

\begin{proof}
 Because each fiber $\ov\M_p$ is compact, the restriction of $f$ to $\ov\M_p$ is proper and continuous, so induces a map $f_*$ in \Cech homology.  Then \eqref{5.invariant} follows from \eqref{5.ssvfc} and \eqref{1.invts}.
 \end{proof}

\setcounter{equation}{0}
\setcounter{section}{0}
\setcounter{theorem}{0}
\appendix
\section{Comparison of homology theories}\label{sectionA}
 \medskip

  This first appendix records needed facts about the Steenrod, \Cech and Borel-Moore homology and the corresponding cohomology theories, and provides references.

\subsection{Spaces.}   We will consider five categories:  the category ${\cal A}$ of Hausdorff spaces and continuous maps, the subcategories $\AC$ of compact spaces  and  $\ACM$ of compact metric spaces,  the category $\ALC$ of locally compact spaces and proper continuous maps, and the subcategory $\AEC\subset \ALC$ of   locally compact,  separable metric spaces with finite (covering) dimension.  Every $n$-dimensional separable metric space is homeomorphic to a subset of $\R^{2n+1}$ \cite[Theorem~V 3]{HW}, and using  local compactness one can  lift this  to a homeomorphism into $\R^{2n+2}$ with a {\em closed} image   \cite[IV.8.2 and 8.3]{D}.  Thus  objects in $\AEC$ can be regarded as closed subsets $X$ of euclidean space  (``euclidean closed'').

 For finite-dimensional Hausdorff manifolds,  the properties of being $\sigma$-compact and second countable  are equivalent, and any manifold with these properties is separable, paracompact, metrizable, and in the category $\AEC$.

\subsection{Homology.}  Steenrod homology $\sHH_*$ and \Cech homology $\cHH_*$  are introduced in Section~1.  For   the intersection theory done in Sections~4 and 5, it is useful to also use   Borel-Moore homology $\HBM_*$.  Throughout, we  restrict attention to constant coefficients in $R=\Z$ or $\Q$. 

\begin{itemize}\setlength{\itemsep}{4pt}
\item[] \hspace*{-10mm} $\bullet$\   Steenrod (also called Steenrod-Sitnikov) homology   is defined on $\ALC$ using chains that are dual to compactly supported, finite-value Alexander-Spanier cochains \cite[\S 4]{ma}.   It can also be defined  on locally compact metrizable spaces using ``canonical coverings'' \cite{Sk, Sk2};  see also Milnor's  construction on compact pairs   \cite{milnor}. It has an extension to  paracompact spaces in $\A$ called strong homology \cite[Chapter 19]{Mardesic}.

\item[] \hspace*{-10mm} $\bullet$\    \Cech homology is defined for pairs in $\A$ using nerves of covers.  It is only a partially exact homology theory, but it has the Continuity Property \eqref{1.Cech.ContinuityProperty} \cite[pages 260-261]{ES} and is exact for finitely triangulable spaces \cite[\S IX.9]{ES}. 

\item[] \hspace*{-10mm} $\bullet$\     Borel-Moore homology $\HBM_*$  is  defined on  $\ALC$  using sheaves \cite{BM, Br2, Iv} or on $\AEC$ using singular cohomology \cite[Chapter 19]{F1}, \cite[Appendix~B]{F2}.  It  has all of the properties listed  in Section~1 for Steenrod homology.
In  particular,  for each open set $U\subseteq X$ there is a natural  restriction map 
$$ 
 \rho_U: \HBM_*(X)\to \HBM_*(U)
$$
corresponding to \eqref{1.rhoU} and an exact sequence corresponding to \eqref{1.LES} (cf. \cite[IX.2.1]{Iv}). 
\end{itemize}

Steenrod and Borel-Moore   satisfy the Eilenberg-Steenrod axioms, so are  naturally isomorphic to singular homology on the category of triangulable spaces.  Both are single space theories  in the sense of \cite[\S X.7]{ES} (see page vii in \cite{ma} and Corollary~V.5.10 and the cautionary note  V.5.12 \cite{Br2}).   In particular,  the theorems in Section~X.7 of \cite{ES} show that
\bear
  \label{def.rho.cohom}
\sHH_*(X, A)= \sHH_*(X\Setminus A),\hspace{8mm} \HBM_*(X, A)= \HBM_*(X\Setminus A)
 \eear  
for any closed pair $(X,A)$. Steenrod  and Borel-Moore homologies satisfy Minor's modified  continuity property:  if $\{X_\alpha\}$ is an inverse system  in $\ACM$ with $X = \varprojlim X_\alpha$, then there is a natural  exact sequence
\bear\label{A.cont.seq}
\xymatrix@=3.5mm{
0 \longrightarrow {\varprojlim}^{\!1}\,  \sHH_{k+1}(X_\alpha) \longrightarrow \sHH_k(X) \longrightarrow \varprojlim\sHH_k(X_\al)\longrightarrow 0,
}
\eear
and a corresponding sequence with $\sHH_*$  replaced by $\HBM_*$ \cite[Thm.~4]{milnor}, \cite[p.81]{ma1}, \cite[V.5.15]{Br2}. 
There are many axiomatic characterizations of  Steenrod homology (cf. \cite{In, BMd}.  In particular, Theorem~1 of \cite{In} asserts:
\begin{itemize}
\item  Up to natural isomorphism,  $\sHH_*(X;G)$  is the unique  homology theory on $\AC$ that satisfies the Eilenberg-Steenrod axioms, is functorial in both $X$ and $G$, and has the  continuity property \eqref{1.Cech.ContinuityProperty} for   infinitely divisible abelian groups $G$.
\end{itemize}

\subsection{Natural Transformations.}  For any closed subset $X$ of  a paracompact $n$-dimensional oriented manifold $N$ and  $R=\Z$ or $\Q$,  there is a natural  isomorphism (a version of  Steenrod duality)
\bear\label{A.D1}
H^k(N, N\Setminus X;R) \overset\cong\longrightarrow \sHH_{n-k}(X;R)
\eear
 (see \cite[Theorem~11.15]{ma}, using the isomorphism  from the Alexander-Spanier to the singular  cohomology of  $(N, N\Setminus X)$  in the proof of \cite[Cor.~6.9.6]{Sp1}).  There is a similar  isomorphism (Poincar\'{e}-Alexander duality)
 \bear\label{A.D2}
H^k(N, N\Setminus X;R) \overset\cong\longrightarrow \HBM_{n-k}(X;R)
\eear
for Borel-Moore homology \cite[Thm.~10.4]{Sp2} or \cite[IX.4.7]{Iv}.   Taking $N=\R^n$,  \eqref{A.D1} and \eqref{A.D2} together show  that there is a natural isomorphism $\beta$ from Steenrod to Borel-Moore   homology   on the category $\AEC$.  In fact, on  $\AEC$ with $R=\Z$ or $\Q$, there is a commutative diagram of natural transformations
\begin{equation}
\label{A.trianglediagram}
\begin{gathered}
\xymatrix@=3.5mm{
 & \  \sHH_k(X;R)  \ar[rd]^\gamma \ar[dd]_\beta^\cong  && \\
H_k(X;R) \ar[ru]^\phi \ar[dr]^\psi &&  \ \  \cHH_k(X;R)   \\
& \ \quad \HBM_k(X;R)  \ar[ru]_\alpha  & \\
}
\end{gathered} \end{equation}
 where $\gamma$ is as in  \cite[Theorem~4]{Sk}, $\psi=\beta \circ \phi$ and   $\alpha= \gamma \circ \beta^{-1}$  
($\phi$ factors through compactly supported homology:  see  page 291 and the first arrow on page 308 in \cite{ma}.) Furthermore,   
\begin{enumerate}[(i)]\setlength{\itemsep}{4pt}
\item   The isomorphism   $\beta$  carries the restriction map    \eqref{1.rhoU} to the corresponding restriction map  in Borel-Moore homology, and carries the exact sequence \eqref{1.LES} to the corresponding   Borel-Moore sequence by matching both to the exact sequence 
\bear
\label{A.LES}
 \cdots \longrightarrow H^{k-1}(N\Setminus X) \overset{\delta}\longrightarrow H^k(N, N\Setminus X) \overset{j^*}\longrightarrow H^k(N)  \overset{\iota^*}\longrightarrow H^k(N\Setminus X) \longrightarrow \cdots
\eear
in Alexander-Spanier (or equivalently singular) cohomology;  cf. Theorems~11.15 and 8.1 in \cite{ma},  and diagram IX.3.5 in  \cite{Iv}.  
(Steenrod and Borel-Moore homologies are also isomorphic on the category $\ACM$ \cite[\S 5]{BM}.)

\item  $\gamma$ is surjective,  and is  an isomorphism if  $R=\Q$, or if $R=\Z$  and either (i) $k=\dim X$  or (ii) $X$ is a compact  and   locally contractible \cite[Thm.~4]{Sk}.   
\item $\phi$ and $\psi$ are isomorphisms if   $X$ is  compact and locally contractible \cite[\S 9.6]{ma}, \cite[\S V.12]{Br2}. 
\item  $\alpha$ is an isomorphism for  locally contractible spaces $X$ in $\ACM$  \cite[V.5.19]{Br2}. 
\end{enumerate}
 In particular,  the natural transformations in \eqref{A.trianglediagram} give isomorphisms
 \bear\label{A.6}
 \HBM_*(X; \Q)\ \cong\   \sHH_*(X; \Q)\ \cong\ \cHH_*(X; \Q)
\eear
for compact metric spaces $X$,  and  
\bear\label{A.H_*N}
\xymatrix{
H_*(N; \Z) \ar[r]_\cong^\phi  & \sHH_*(N;\Z) \ar[r]_\cong^\gamma  & \cHH_*(N;\Z)
}
\eear
 for compact manifolds $N$ and for finite polyhedra.  
 
 \subsection{Cohomology.} \label{sSA.cohom}
  Singular,  \Cech   and Alexander-Spanier cohomology are all defined on the category  of pairs in ${\cal A}$; we will restrict attention to paracompact pairs and constant coefficients $R=\Z$ or $\Q$. The last two are  naturally isomorphic 
\cite[6.8.8 and Exercise 6.D.3]{Sp1}, so we use the notation $\cHH^*$ for both.  The corresponding theories based on compactly supported cochains are also isomorphic, and will be denoted by $\cHH^*_c$.    In Part~II of \cite{ma}, Massey also uses  locally finite-valued  cochains for closed pairs, but these give the same theory as Alexander-Spanier cohomology by  \cite[Thm. 8.1]{ma}.   Thus for paracompact pairs $(X,A)$ and $R=\Z$ or $\Q$, it suffices to consider   three theories:  $\cHH^*$, $\cHH^*_c$ and  singular cohomology $H^*$.  These  are related by natural transformations
\bear\label{A.New2}
\xymatrix{
\cHH^*_c(X, A;R) \ar[r]^{\mu} & \cHH^*(X,A;R) \ar[r]^{\nu}   &  H^*(X,A;R)
}
\eear
which preserve cup products  \cite[Thm. III.2.1 and Cor. III.4.12]{Br2} and \cite[\S 6.5, \S 6.9]{Sp1}.   Furthermore, 
\begin{enumerate}[(i)]\setlength{\itemsep}{4pt}
\item $\mu$ is an isomorphism if $X$ is compact \cite[6.6.9]{Sp1}.  
\item  $\nu$ is an  isomorphism if $X$ and $A$ are locally contractible (e.g.  manifolds) and $A$ is either closed  \cite[Theorem III.2.1]{Br2} or  open  (proof of 6.9.6 in  \cite{Sp1}).
\end{enumerate}

 For paracompact,  locally compact spaces $X$, Steenrod homology pairs with $\cHH^*_c$ in the sense that there is a Kronecker pairing 
\bear\label{A.Kronecker1}
\sHH_*(X)\otimes \cHH_c^*(X) \to R
\eear
(cf. \cite[\S 4.8]{ma}), and an exact   sequence
\bear\label{A.UCT}
 0 \to \mbox{Ext}\left(\cHH^{k+1}_c(X), R\right) \longrightarrow \sHH_k(X;R)\overset{\tau}\longrightarrow \mbox{Hom}\left(\cHH^{k}_c(X), R\right) \to 0
\eear
 that is natural in both $X$ and $R$ \cite[Cor. 4.18 and p.371]{ma}.   The same sequence holds with $\sHH_*$ replaced by $\HBM_*$  \cite[Theorem 3.3]{BM}.    In particular, there is a natural transformation
 \bear\label{A.tau}
\tau:  \sHH_k(X;R) \longrightarrow  \cHH^{k}_c(X;R)^\vee
 \eear
 where ${ }^\vee$ denotes dual, i.e. $\mbox{Hom}(\cdot, R)$.  By \eqref{A.UCT}, $\tau$ is an isomorphism for $R=\Q$.   For compact metric spaces, it factors through \Cech homology, as follows.

\begin{lemma}\label{A.taulemma}
On the category $\ACM$ with $R=\Z$ or $\Q$, $ \cHH_c^*(X)= \cHH^*(X)$ and  there is a commutative diagram of natural transformations
\bear\label{A.tauDiagram}
\xymatrix@=3mm{
 \sHH_*(X)   \ar[dr]_\gamma  \ar[rr]^{\tau}  &&  \cHH^*(X)^\vee \\
& \cHH_*(X) \ar[ur]_{\sigma}& 
}
\eear
with  $\gamma$   as in \eqref{A.trianglediagram} and $\sigma$ as defined below.  For $R=\Q$, all three maps are isomorphisms.
\end{lemma}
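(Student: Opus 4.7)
The plan is to handle the three assertions in sequence: first the identification $\cHH_c^*(X)=\cHH^*(X)$, next the commutativity of the triangle, and finally the claim that all three maps are isomorphisms over $\Q$. The identification is immediate from property (i) of \S A.4: for compact $X$, the natural map $\mu$ in \eqref{A.New2} from compactly supported to ordinary \v{C}ech cohomology is an isomorphism. This makes the target of $\tau$ in \eqref{A.tau} equal to $\cHH^*(X)^\vee$, as required by the statement.

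For the construction of $\sigma$ and the commutativity of \eqref{A.tauDiagram}, I would work at the level of nerves of open covers. On a compact metric space $X$, both \v{C}ech theories are computed from finite open covers: $\cHH_*(X)=\varprojlim_\U H_*(N(\U))$ and $\cHH^*(X)=\varinjlim_\U H^*(N(\U))$. The simplicial Kronecker pairings $H_k(N(\U))\otimes H^k(N(\U))\to R$ are natural under refinement maps, so they induce in the (co)limit a bilinear pairing $\cHH_*(X)\otimes\cHH^*(X)\to R$, which I would take as the definition of $\sigma$. The commutativity $\tau=\sigma\circ\gamma$ would then follow from Sklyarenko's construction of $\gamma$ in \cite{Sk}: a Steenrod class on a compact metric space is represented by an inverse system of simplicial chains $\{\alpha_\U\}$ in the nerves (satisfying an extra convergence condition), and $\gamma$ forgets this condition to yield the corresponding \v{C}ech class in $\varprojlim H_*(N(\U))$. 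Using the identification of \v{C}ech with compactly supported Alexander--Spanier cohomology on compact metric spaces (via \cite[6.8.8]{Sp1} and property (i) of \S A.4), the Kronecker pairing of \cite[\S 4.8]{ma} underlying $\tau$ reduces on representatives to the simplicial pairing that defines $\sigma$. Consequently, both $\tau(\alpha)(\beta)$ and $\sigma(\gamma(\alpha))(\beta)$ evaluate to $\langle \alpha_\U,\beta_\U\rangle$ for representatives over any common cover $\U$.

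For the final claim, over $R=\Q$ the Ext term in the universal coefficient sequence \eqref{A.UCT} vanishes because $\Q$ is injective, so $\tau$ is an isomorphism; $\gamma$ is an isomorphism by property (ii) of \S A.3 for rational coefficients; and then $\sigma$ is an isomorphism by the commutativity of the triangle (two-out-of-three). The main obstacle is the commutativity step: matching the Kronecker pairing from \cite[\S 4.8]{ma}, which is phrased in terms of compactly supported Alexander--Spanier cochains dual to Steenrod chains, with the nerve-level simplicial Kronecker pairing defining $\sigma$. This comparison is standard but requires careful bookkeeping through the natural isomorphism between \v{C}ech and Alexander--Spanier cohomology on compact metric spaces together with the explicit form of Sklyarenko's $\gamma$.
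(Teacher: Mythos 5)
Your overall strategy is viable but diverges from the paper's in a way that creates real extra work, and the one step you flag as ``standard but requires careful bookkeeping'' is in fact the entire content of the lemma, left unverified. You define $\sigma$ intrinsically as the limit of the simplicial Kronecker pairings on nerves of finite covers, and then must \emph{prove} that $\tau=\sigma\circ\gamma$ by matching Massey's Alexander--Spanier-based pairing against the nerve-level pairing through Sklyarenko's explicit description of $\gamma$. That comparison is plausible but nontrivial (one must track the isomorphism between \v{C}ech and compactly supported Alexander--Spanier cohomology on representatives, and the convergence condition in Sklyarenko's chains), and your proposal asserts rather than carries it out. The paper avoids this entirely by a different device: it writes the compact metric space $X$ as an inverse limit of finite polyhedra $X_\ell$, observes that $\gamma$ is an \emph{isomorphism} on finite polyhedra, and simply \emph{defines} $\sigma$ there as $\tau\circ\gamma^{-1}$ --- so commutativity on each $X_\ell$ is automatic. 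It then transports $\sigma$ to $X$ using the continuity of \v{C}ech homology ($\check\iota_*$ an isomorphism) and of \v{C}ech cohomology together with $\varprojlim\Hom(\cdot,R)=\Hom(\varinjlim\cdot,R)$ ($j_*$ an isomorphism), and commutativity over $X$ follows from the naturality of $\gamma$ and $\tau$ with respect to the maps $X\to X_\ell$. Note that the naive shortcut $\sigma:=\tau\circ\gamma^{-1}$ on $X$ itself is unavailable for $R=\Z$ precisely because $\gamma$ is then only surjective; passing to polyhedra is what makes it legitimate. Your treatment of the two bookend claims is fine: $\cHH^*_c=\cHH^*$ for compact $X$ is property (i) of \S A.4, and the rational case follows from the vanishing of the $\mathrm{Ext}$ term in \eqref{A.UCT} plus the two-out-of-three argument (the paper instead invokes the vanishing of the $\varprojlim^1$ term in \eqref{A.cont.seq} to get $\iota_*$, hence $\gamma$ and $\sigma$, invertible --- either works). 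To make your version complete you would either have to supply the chain-level comparison in detail, or adopt the paper's polyhedral reduction, which renders it unnecessary.
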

\begin{proof} 
First note that the compact metric space $X$ can be written as the inverse limit of a system $\{X_{\ell}\}$ in the category of finite polyhedra  (cf. \cite[{p.82}]{ma1} and \cite[6.6.7]{Sp1}). 
The homology maps induced by the inclusions   $X\to X_\ell$ define maps $\iota_*, j_*$ and $\check\iota_*$ into inverse systems  as in the diagram below.   But $\gamma$ is an isomorphism for finite polyhedra, thus we get a diagram \eqref{A.tauDiagram} when $X$ is replaced by $X_\ell$, and $\si$ is defined to be $\tau\circ\gamma^{-1}$.    By the naturality of $\gamma$ and $\tau$, there are induced maps $\gamma_{\bullet}$, $\tau_{\bullet}$  and  $\si_{\bullet}$ between the inverse systems, and the  front left and back squares in the diagram commute.  Furthermore, $\gamma_\bullet$ is an isomorphism, and the bottom triangle commutes. 

\item 
\begin{minipage}[t]{3.1in}
 The map $\check\iota_*$ is an isomorphism by \Cech continuity,  and $j_*$ is an isomorphism   by the continuity of \Cech  cohomology  and the fact that
  $\varprojlim\Hom(\cdot, R)=\Hom(\varinjlim \cdot, R)$.   The first statement of the lemma follows by defining $\sigma$ to be the composition  $\check \iota_*^{-1} \si_{\bullet}j_*$.  

\indent For $R=\Q$,  the $\lim^{\! 1}$ term in \eqref{A.cont.seq} and the  $\mbox{Ext}$ term in \eqref{A.UCT} vanish.  Hence  $\iota_*$ and  $\tau$, and therefore $\gamma$ and $\sigma$,   are isomorphisms   (cf. \cite[Remark 5.0.2]{pardon}). \end{minipage}
 \begin{minipage}[t]{3in}
\vspace*{-2mm }
\begin{center} \hspace*{5mm} \xymatrix@=3mm{
 \sHH_*(X)  \ar[dd]_{\iota_*} \ar[dr]_\gamma  \ar[rr]^\tau   &&  \cHH^*(X)^\vee \ar[dd]^{\check\iota_*}_\cong\\
& \cHH_*(X)  \ar@{-->}[ur]_{\sigma}  \ar[dd]_(.2){j_*}^(.2)\cong& \\
\varprojlim \sHH_*(X_\ell)  \ar[dr]_{\gamma_{\bullet}}^{\cong} \ar[rr]^(.4){\tau_{\bullet}} |\hole  && \varprojlim \cHH^*(X_\ell)^\vee  \\
&\varprojlim \cHH_*(X_\ell) \ar@{-->}[ur]_{\si_{\bullet}} &  
}
\end{center}
\end{minipage}

\end{proof}

Thus on $\ACM$,    $\sigma$ defines a  Kronecker   pairing
$\cHH_*(X)\otimes \cHH^*(X) \to R$
in \Cech theory,  and this  is related to \eqref{A.Kronecker1} by  
\bear\label{A.twopairings}
\langle \gamma(b),\, \alpha\rangle\,=\, \langle b,\, \alpha \rangle  \qquad  \forall b\in \sHH_*(X), \ \alpha\in\cHH^*(X).
\eear

 \subsection{ Borel-Moore  via embeddings.}  Fulton and MacPherson developed a simplified version of Borel-Moore homology on $\AEC$  for use in algebraic geometry; see Chapter~19 of \cite{F1} or Appendix~B of \cite{F2}.  Fixing coefficients in a field or in $\Z$, one {\em defines} the  embedded Borel-Moore homology   of a closed subset $X$ of $\R^m$ by  formula \eqref{A.D2}:
\bear\label{Aembedded1}
\ov{H}_k(X)\  \overset{\rm def}=\ \  H^{m-k}(\R^m,\R^m\Setminus X),
\eear 
 (cf.  \cite[\S19.1, eq (1)]{F1}). Similarly,  for any open subset $U$ of $X$, one sets
\best
\ov{H}_k(U)\  =\  \ov{H}_k(X, A) \ =\ H^{m-k}(\R^m\Setminus A,\R^m\Setminus X).
\eest
where $A=X\Setminus U$  (cf. \cite[(30) in Appendix B]{F2}).   Using facts about singular cohomology, one can  verify that the groups $\ov{H}_*$ have all of the properties of Borel-Moore homology (cf.  \cite{F1}, Section~19.1 and especially Example~19.1.1).   For a direct correspondence,  note that   for an object $X$ in $\AEC$, the choice of a closed embedding $\iota:X\to \R^m$ with image $X^e$ induces natural isomorphisms
\bear
\xymatrix@=8mm{
    \HBM_*(X)  \ar[r]_{\iota_*\qquad\quad }^{\cong\qquad \quad}     &     \HBM_*(X^e)\ \cong\ \ov{H}_*(X^e),  }
\eear
 where the second   is the composition of \eqref{A.D2} and \eqref{Aembedded1}.

\subsection{Cap Products.} 

For any closed subset $X$ of a locally compact $Z$, we have a localized (sheaf theoretic supported) cap product 
\bear\label{A3.1}
\HBM_n(Z) \otimes H^k(Z, Z\Setminus X)   \xra{\cap}\HBM_{n-k}(X), 
\eear
on Borel-Moore homology with coefficients in any commutative Noetherian ring, cf.  \cite[IX.3.1, \S IX.5 and \S II.9]{Iv}, \cite[\S 19.1]{F1}. 
This  has several naturality properties  (cf. \cite[\S IX.3]{Iv}),  including:

\begin{itemize}\setlength{\itemsep}{4pt}

\item  for a proper map $f:(Z', X')\to (Z, X)$ of  closed, locally compact pairs, 
\bear\label{A.cupnaturality1}
f_*(a' \cap f^*\xi)= f_*a' \cap \xi. 
\eear
\item for closed subsets $X\ma\hookrightarrow^i Z$ and $Y\subseteq Z$, 
\bear\label{A.cupnaturality2}
(a \cap \xi)\cap i^*\eta= a\cap (\xi \cup \eta) \quad \mbox{in }\HBM_*(X\cap Y).
\eear
\item    if $X\subseteq Z$ is closed  and $U\ma\hookrightarrow^j Z$ is open, then the restriction to $U$ satisfies 
\bear\label{A.cupnaturality3}
\rho_{U\cap X}(a \cap \xi)= \rho_U(a)\cap j^*\xi  \quad \mbox{in }\HBM_*(U\cap X).
\eear
\end{itemize}
\vspace{5pt}

\noindent These hold for all  $a \in \HBM_*(Z)$, $a' \in \HBM_*(Z')$, $\xi\in H^*(Z, Z\Setminus X)$ and $\eta\in H^*(Z, Z\Setminus Y)$. \subsection{Fundamental classes.} 
Let $N$ be an oriented  topological $n$-manifold (a Hausdorff space locally homeomorphic to $\R^n$).  Its orientation determines  fundamental classes
\bear\label{fund.BM.sheaf}
[N]\in \sHH_n(N;\Z) \hspace{6mm}  [N]\in \HBM_n(N;\Z)
\eear 
in Steenrod  \cite[\S 4.9]{ma}  and Borel-Moore  homology  \cite[IX.4.6]{Iv}.   The restriction of $[N]$   to an  open set $U\subseteq N$ is the fundamental class of $U$:
$$
\rho_U[N]=[U].
$$
On each component $N_\alpha$ of $N$,  the orientation determines isomorphisms $\sHH_n(N_\alpha;\Z) = \Z = \HBM_n(N_\alpha;\Z)$ under which $[N_\alpha]$ corresponds to $1$.  The naturality of 
the  transformation $\beta$ in \eqref{A.trianglediagram} with respect to restriction maps then implies  that  the two fundamental classes \eqref{fund.BM.sheaf} correspond under $\beta$.

 For any closed subset $Y$ of $N$, the cap product \eqref{A3.1} with the fundamental class is an isomorphism  
 \bear\label{PD.BM}
D: H^{k} (N, N \Setminus Y) \underset\cong{\xrightarrow{  [N]\cap \;\;}}\HBM_{n-k}(Y)
\eear
which is precisely  \eqref{A.D2}  cf.  \cite[IX.4.7]{Iv}.  There is corresponding isomorphism with values in Steenrod homology.  In particular, if $N$ is compact, taking $Y=N$ gives the Poincar\'{e} duality isomorphisms
\bear\label{A.PD}
D: H^k(N) \overset\cong \longrightarrow \HBM_{n-k}(N)  \hspace{8mm}  D: H^k(N) \overset\cong\longrightarrow \sHH_{n-k}(N).
\eear

\subsection{Submanifolds.}  Suppose that $N$ is a  paracompact  oriented $C^1$ $n$-manifold and 
$$
V\hookrightarrow N
$$
 is  a properly embedded oriented  submanifold of codimension $k$.   By identifying a neighborhood $U$ of $V$ in $N$ with the total space of the normal bundle to $V$ and using excision, the Thom class  of  the normal bundle defines a singular cohomology  class
\bear\label{A.transverseclass}
u=u_{V, N}\in H^{k}(U, U\Setminus V) = H^{k}(N, N\Setminus V) 
\eear
Following Fulton \cite[\S 19.2]{F1}, we call $u$ the {\em orientation class} of $V$ in $N$.  Then  $[V]$ and $[N]$ are related 
\bear\label{A.[XV]}
[V]\ =\ [N] \cap u_{V, N} \quad \mbox { in } \HBM_{n-k}(V)
\eear
as in \cite[IX.4.9]{Iv}.  In particular, $[V]$ corresponds to $u_{V,N}$ under the duality \eqref{PD.BM}. The naturality of Thom class gives a naturality property of $u$:  if a $C^1$ map $f:M\to N$ of oriented manifolds is transverse to  $V$, then $W=f^{-1}(V)$ is an oriented submanifold of $M$ with orientation class 
\bear\label{A.nat.Thom}
u_{W,M} = f^*u_{V,N}.
\eear

\subsection{Intersection Pairing.} For   closed subsets $X$ and $Y$ of a manifold $N$ as in \S A.8, there is a cup product in singular cohomology
$$
 H^{n-k}(N, N\Setminus X) \otimes H^{n-\ell}(N, N\Setminus Y) \overset{\cup}\longrightarrow H^{2n-k-\ell}(N, N\Setminus(X\cap Y))
$$
 (cf. \cite[\S 5.6]{Sp1}, noting that $\{N-X, N-Y\}$ is an excisive pair by \cite[4.6.4]{Sp1}). The duality \eqref{PD.BM} translates the cup product   into the cap product 
  \bear\label{BM.cap}
\HBM_k(X)  \otimes H^{n-\ell}(N, N\Setminus Y) \overset{\cap }\longrightarrow  \HBM_{k+\ell-n} (X\cap Y)  
\eear
 by the formula:
\best
b \cap \al  = D\big( D^{-1}b \cup \al \big).
\eest

 Note that  the righthand side is $b\cap i^*\alpha$ for the cap product  \eqref{A3.1}, where $i$ is the inclusion of $X$ into $N$ (cf.    \eqref{A.cupnaturality2} with $a=[N]$,  $b=D\xi=[N]\cap \xi$, and $\eta=i^*\alpha$).  Thus   this cap product depends only on the restriction of $\alpha$ to $X$.

  Applying  \eqref{PD.BM} again yields  a natural intersection pairing in Borel-Moore homology 
\bear\label{A.BM.fat.int}
 \HBM_k(X) \otimes \HBM_\ell(Y) \overset{\bullet }\longrightarrow  \HBM_{k+\ell-n} (X\cap Y)
\eear
given by
\bear\label{A.22b}
a \bullet b \, =\, D\big(D^{-1}b\cup D^{-1}a\big) = b\cap  D^{-1}a.
\eear
(The order reversal is needed to obtain the correct signs,  cf. \cite[\S V.11]{Br2}). 

The intersection pairing \eqref{A.BM.fat.int} is natural with respect to the restriction map $\rho_U$ to  any open subset $U$ of $N$:   the naturality of the cup product and  \eqref{def.rho.cohom} translates into the identity
\bear\label{rho.dot}
\rho_U (a\bullet b)= \rho_U( a )\bullet \rho_U (b).
\eear
 In particular, if $U$ is any neighborhood of $X\cap Y$, then  the restriction to $U$ induces the identity on $H^{BM}_*(X\cap Y)$ and hence 
 \bear\label{A.24B}
 a\bullet b=  \rho_U( a )\bullet \rho_U (b). 
\eear  Thus the intersection 
 localizes on any open neighborhood of $X\cap Y$.

\begin{ex}
\label{A.intersectionlemma1}
Let $X$ and $Y$ be   properly embedded oriented submanifolds  of  an oriented  $C^1$ manifold $N$.  If $X$ and $Y$  intersect transversally, then $X\cap Y$ is an oriented manifold, and 
$$
[X]\bullet [Y]\ =\ [X\cap Y] \qquad \mbox{ in } \HBM_*(X \cap Y). 
$$
The proof  exactly as in the proof of Theorem VI-11.9 of \cite{Br1}, using formulas   \eqref{A.cupnaturality2}, \eqref{A.[XV]},   \eqref{A.22b}  and the naturality of Thom classes, and interpreting  all terms  as elements of Borel-Moore homology.
\end{ex}

\medskip

More generally,  consider proper maps $f:M\ra N$ and $g:P \ra N$  between oriented manifolds.  Set 
\bear\label{Ap.D.Z}
Z\ =\ \big\{ (x,y)\in N\ti N\,\big|\,  f(x)=g(y)\big\}
\eear
and define  $h:Z\ra N$ by $h(x, y)=f(x)=g(y)$.

 \begin{lemma}\label{L.f.transv.g}  Suppose that maps $f$ and $g$ as above are  transverse  and have complementary dimensions in $N$. Then $Z$ is a 0-dimensional manifold, and has an induced orientation  such that
 \bear\label{dot.fg}
f_*[M]\bullet  g_*[P] \ =\    h_*[Z]
\eear
in $\HBM_0(X\cap Y;\Z)$, where $X$ and $Y$ are the images of $f$ and $g$.
\end{lemma}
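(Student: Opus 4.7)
The plan is to identify $Z$ with the transverse preimage $(f\times g)^{-1}(\Delta_N)$, where $\Delta_N\subset N\times N$ is the diagonal, and then localize the intersection pairing to reduce \eqref{dot.fg} to Example~\ref{A.intersectionlemma1}. Transversality of $f$ and $g$ is equivalent to transversality of the proper map $f\times g:M\times P\to N\times N$ to $\Delta_N$, and the complementary dimension hypothesis makes $Z$ a zero-dimensional manifold. I would orient $Z$ by the transverse preimage rule, giving each $(x,y)\in Z$ the sign of the isomorphism $df_x\oplus(-dg_y):T_xM\oplus T_yP\to T_{f(x)}N$ (product orientation on the left, the given orientation on the right).

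First I would show that $X\cap Y$ is a discrete subset of $N$ and that $h^{-1}(q)$ is finite for each $q\in X\cap Y$: transversality together with $\dim M+\dim P=n$ forces $df_x$ and $dg_y$ to be injective at every $x\in f^{-1}(q)$ and $y\in g^{-1}(q)$, while properness makes these preimages finite. Hence one can choose small disjoint neighborhoods $V_i\subset M$ of the $x_i\in f^{-1}(q)$ and $W_j\subset P$ of the $y_j\in g^{-1}(q)$, and a small enough open neighborhood $U_q$ of $q$ in $N$, so that (i) $f^{-1}(U_q)\subset\bigsqcup_iV_i$ and $g^{-1}(U_q)\subset\bigsqcup_jW_j$, (ii) each $f|_{V_i}:V_i\to U_q$ and $g|_{W_j}:W_j\to U_q$ is a proper embedding of an oriented submanifold, and (iii) the submanifolds $f(V_i),g(W_j)\subset U_q$ have pairwise intersections only at $q$. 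By \eqref{5.2MN} we have $\rho_{U_q}f_*[M]=\sum_i[f(V_i)]$ and $\rho_{U_q}g_*[P]=\sum_j[g(W_j)]$, and the localization property \eqref{A.24B} reduces \eqref{dot.fg} to verifying it in $\HBM_0(U_q\cap X\cap Y)=\HBM_0(\{q\})$ for each $q\in X\cap Y$.

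On each such local piece, bilinearity of $\bullet$ and Example~\ref{A.intersectionlemma1} give
\begin{equation*}
\rho_{U_q}\bigl(f_*[M]\bullet g_*[P]\bigr)=\sum_{i,j}[f(V_i)]\bullet[g(W_j)]=\sum_{i,j}[f(V_i)\cap g(W_j)]=\Big(\sum_{i,j}\sigma_{ij}\Big)[q],
\end{equation*}
where $\sigma_{ij}\in\{0,\pm1\}$ is the ambient oriented intersection sign and is nonzero exactly when $f(x_i)=g(y_j)=q$, i.e.\ when $(x_i,y_j)\in Z$. Meanwhile $\rho_{U_q}h_*[Z]=\sum_{(x_i,y_j)\in h^{-1}(q)}\varepsilon_{ij}[q]$, where $\varepsilon_{ij}$ is the preimage-orientation sign defined in the first paragraph. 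The only remaining point, and the main technical obstacle, is to verify $\sigma_{ij}=\varepsilon_{ij}$. This is a standard sign comparison: by naturality \eqref{A.nat.Thom} of the orientation class applied to $f\times g$, the preimage orientation at $(x_i,y_j)$ is computed by pullback of $u_{\Delta_N,N\times N}$, and under the identification $\Delta_N\cong N$ this pullback agrees with the ambient intersection sign of the transverse submanifolds $f(V_i)$ and $g(W_j)$ in $U_q$ that enters Example~\ref{A.intersectionlemma1}. With consistent orientation conventions on normal bundles this matching is immediate, and it is the only step that depends on sign choices.
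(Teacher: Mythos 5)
Your proof is correct and follows essentially the same route as the paper: localize the intersection pairing over small disjoint neighborhoods of the points of $X\cap Y$ using \eqref{A.24B}, reduce to proper embeddings $f|_{V_i}$ and $g|_{W_j}$, and apply Example~\ref{A.intersectionlemma1} to each local piece. The one difference is that the paper sidesteps what you call the ``main technical obstacle'' entirely: since the lemma only asserts that $Z$ \emph{has an induced orientation} making \eqref{dot.fg} hold, the paper simply defines the orientation of each $z\in Z$ to be the local intersection sign of $f(V_x)$ and $g(W_y)$, so no comparison with your a priori preimage orientation $df_x\oplus(-dg_y)$ is required.
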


\begin{proof} 
The assumptions that  $f$ and $g$ are transverse with complementary dimension imply that $Z$ is a discrete set of points and that $f$, $g$, and $f\ti g$ are  immersions
 at each point $z=(x,y)\in Z$.    Hence we can find disjoint open neighborhoods $U_p$ of the points $p\in X\cap Y$ so that, for 
$U=\ma\bigsqcup  U_p$, we have
\best
f^{-1}(U)= \bigsqcup_{x\in f^{-1}(X\cap Y)} V_x, \quad \mbox {and} \quad g^{-1}(U)= \bigsqcup_{y\in g^{-1}(X\cap Y)} W_y
\eest 
where $\{V_x\}$ (resp. $\{W_y\}$) are disjoint neighborhoods of $x$ in $M$ (resp. $y$ in $P$), and where  $f$ and  $g$ restrict to a proper embeddings $f:V_x \ra U$ and  $g:W_y \ra U$.

   As in  \eqref{A.24B},  the lefthand side of \eqref{dot.fg}  localizes, and hence is a locally finite sum of local intersections 
 $$
 f_*[M]\bullet  g_*[P] \ =\  \sum_{(x, y)\in Z} f_*[V_x] \bullet g_*[W_y]\ =\  \sum_{(x, y)\in Z} \left[f(V_x) \cap g(W_y)\right], 
 $$
where the second equality is obtained by  applying  Example~\ref{A.intersectionlemma1}.   But $f(V_x) \cap g(W_y)$ is exactly $h(z)$, and is oriented as the local intersection of $f$ and $g$. This identification induces an orientation 
$[z]\in \HBM_0(z;\Z)$ for each $z\in Z$, and hence an orientation on $Z$.   The lemma follows.
\end{proof}

\subsection{Intersection Numbers.} With constant coefficients  $R=\Z$ or $\Q$, the intersection of classes   $a\in \HBM_k(X;R)$ and $b\in \HBM_\ell(Y;R)$  of complementary dimension (i.e. $k+\ell =n=\dim N$) is a  0-dimensional class
$$
a\bullet b\in \HBM_0 (X\cap Y;R).
$$
If $X\cap Y$ is compact,  there is an  augmentation map 
$
 \ep: \HBM_*(X\cap Y;R)\to R
$
 (induced by the map from $X\cap Y$ to a point), and the intersection number is  defined by
\bear\label{int.number} 
a\cdot b = \ep (a\bullet b) \in R. 
\eear
This  can be written in many other ways:
\bear\label{int.ep} 
a\cdot b \ =\  \ep (b \cap \al ) \ =\  \lg  b\cap \al ,1 \rg \ =\   \lg b, \al \rg \ =\   \int_{b}\al,
\eear
for elements $\al=D^{-1}a$ of $H^*(N, N\Setminus X)$.  In particular, when $X=N$ is a compact oriented manifold, then $\al \in \cHH_c^*(N)$ is  Poincar\'{e} dual to $a$ under \eqref{A.PD}    (cf. \cite[V.11]{Br2}).

\medskip

\begin{ex}
\label{A.Ex3}   
Let $f$ and $g$ be proper maps as in Lemma~\ref{L.f.transv.g}, and assume that the   intersection of their images are compact.   Then   \eqref{Ap.D.Z} is a compact 0-dimensional oriented manifold,  consisting of finitely many points $x$ with sign $\ep(x)=\pm 1$.
 In this case, using \eqref{dot.fg}, the intersection number is 
 $$   
f_*[M]\cdot g_*[P]  \ =\ \ep(f_*[M]\bullet  g_*[P])\ =\ \ep( h_*[Z]) \ =\ \sum_{x\in Z} \ep(x) \in \Z. 
$$

\end{ex}


\setcounter{equation}{0}
\setcounter{theorem}{0}
\section{Dimension theory}
\medskip

One can define the dimension of a topological space $X$ in  several  ways:
\begin{enumerate}\setlength{\itemsep}{4pt}
\item The Lebesgue covering dimension $\dim X$  is the smallest number $d$ so that every open cover has a refinement such that every $x\in X$ lies in at most $d+1$ sets of the refinement.  
\item The (large) cohomological dimension ${\mathrm{dim}}_2 X$ is the largest $k$ such that $\cHH^k(X, A;\Z)$ is non-zero for some closed set $A$ in $X$.  
\end{enumerate}
In addition, if $X$ is a metric space, one has:
\begin{enumerate}
\item[(3)]  The Hausdorff dimension ${\mathrm{dim}}_H  X$ of  X is the infimum of  $\delta\ge 0$ with the following property: For any $\ep>0$, 
$X$ can be covered by  countably many sets $\{A_n\}$ with $\diam(A_n)< \ep$ and with $\sum_i \diam(A_n)^\delta <\ep$. 
\end{enumerate}

Standard results of dimension theory show that 
\begin{equation}\label{B.1}
 {\mathrm{dim}}_2 X\ =\  \dim X \qquad\mbox{and}\qquad \dim X\ \le\ {\mathrm{dim}}_H  X,
\end{equation}
where the first equality holds for all non-empty paracompact Hausdorff spaces \cite[36-15 and 37-7]{Na}, and the second holds for all separable metric spaces \cite[p. 107]{HW}.
These are related to condition~\eqref{1.1} as follows. 

\begin{lemma}
\label{LemmaB1}
If $X$ is a compact Hausdorff space with $\dim X=d$, then 
\bear\label{B.2}
\sHH_k(X;\Z)\ =\ \HBM_k(X;\Z)\ =\ 0 \ \quad \mbox{for all $k> d$.} 
\eear
\end{lemma}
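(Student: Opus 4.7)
The plan is to combine the dimension-theoretic identity $\dim X = \dim_2 X$ from \eqref{B.1} with the universal coefficient sequence \eqref{A.UCT}. Since $X$ is compact Hausdorff, it is paracompact, so the first equality in \eqref{B.1} gives $\dim_2 X = d$; by the definition of large cohomological dimension, this means
\begin{equation*}
\cHH^k(X, A;\Z) \ =\ 0 \qquad \text{for every closed } A\subseteq X \text{ and every } k > d.
\end{equation*}
Taking $A=\emptyset$ yields $\cHH^k(X;\Z) = 0$ for all $k>d$. Moreover, since $X$ is compact, $\cHH^*_c(X;\Z) = \cHH^*(X;\Z)$ by property (i) of \S\ref{sSA.cohom}, so also $\cHH^k_c(X;\Z) = 0$ for all $k>d$.

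Now apply the universal coefficient sequence \eqref{A.UCT} with $R=\Z$:
\begin{equation*}
0 \longrightarrow \mathrm{Ext}\bigl(\cHH^{k+1}_c(X),\Z\bigr) \longrightarrow \sHH_k(X;\Z) \longrightarrow \mathrm{Hom}\bigl(\cHH^k_c(X),\Z\bigr) \longrightarrow 0.
\end{equation*}
For $k>d$ we have $k+1>d$ as well, so both $\cHH^k_c(X;\Z)$ and $\cHH^{k+1}_c(X;\Z)$ vanish. Hence both outer terms vanish and we conclude $\sHH_k(X;\Z)=0$ for all $k>d$. The argument for $\HBM_k(X;\Z)$ is identical, using the analogous Borel--Moore universal coefficient sequence noted right after \eqref{A.UCT} (cf.\ \cite[Thm.~3.3]{BM}), which applies since compact Hausdorff spaces lie in $\ALC$.

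There is no substantive obstacle; the proof is an assembly of three ingredients already on the table: (a) compactness to pass from $\cHH^*_c$ to $\cHH^*$, (b) the equality $\dim X = \dim_2 X$ from \eqref{B.1}, and (c) the universal coefficient sequences for Steenrod and Borel--Moore homology. The only point worth a moment's care is that a single hypothesis $k>d$ must kill both $\cHH^k$ and $\cHH^{k+1}$ simultaneously, which is automatic since $k>d$ forces $k+1>d$.
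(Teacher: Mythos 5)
Your proof is correct and follows exactly the paper's own argument: compactness identifies $\cHH^*_c$ with $\cHH^*$, the equality $\dim X = \dim_2 X$ from \eqref{B.1} with $A=\emptyset$ kills $\cHH^k(X)$ for $k>d$, and the universal coefficient sequence \eqref{A.UCT} (and its Borel--Moore analogue) then forces the homology to vanish. The only difference is that you spell out the detail that $k>d$ implies $k+1>d$, which the paper leaves implicit.
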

\begin{proof}
For compact $X$ we have $\cHH^*(X)=\cHH_c^*(X)$.  
Using \eqref{B.1} and  taking $A$ to be the empty set in the definition of ${\mathrm{dim}}_2 X$, one sees that the condition $\dim X=d$ implies that $\cHH^k(X)=0$ for all $k>d$.  Then \eqref{B.2} follows from  \eqref{A.UCT} and the corresponding sequence  for 
Borel-Moore homology. 
\end{proof}

\begin{lemma}
\label{DimensionTheoryLemma}
Suppose that $X$ is a metric space and $M$ is a separable metrizable topological $d$-manifold.  
\begin{enumerate}[(a)]\setlength{\itemsep}{4pt}
\item  For any subspace $S\subseteq X$, $ \dim  S \le  \dim  X$ and $ {\mathrm{dim}}_H S \le  {\mathrm{dim}}_H X$.  
\item   If  $X$ is a countable union of closed subsets $X_i$, then $\dim  X \le \sup\, \dim  X_i$ and ${\mathrm{dim}}_H X \le \sup\, {\mathrm{dim}}_H X_i$. 
 
$$   
\dim  f(X)\, \le\, \dim  X.
$$
\item  $\dim M = {\mathrm{dim}}_H M=d$.
\item  If  $\ov{M}=M\cup S$ is a thin compactification    of $M$ with $\dim S\le d-2$,  then $\dim \ov{M}=d$.   
\end{enumerate}
 \end{lemma}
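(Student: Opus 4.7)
The plan is to prove the four parts in sequence, since (b) and (c) feed directly into the main case (d). For (a), Hausdorff monotonicity is immediate from the definition, since any $\varepsilon$-cover of $X$ restricts to one of $S$ with the same diameters; the covering-dimension monotonicity for metric spaces is the standard subspace theorem in dimension theory (Engelking, \emph{General Topology}, Thm.~7.1.8). Part (b), the sum theorem for covering dimension of a countable union of closed subsets, I would cite from the same source (Thm.~7.2.1) rather than reprove. For Hausdorff dimension the argument is direct: given $\delta > \sup_i {\mathrm{dim}}_H X_i$ and $\varepsilon > 0$, cover each $X_i$ by countably many sets of $\delta$-content below $\varepsilon 2^{-i}$ and union the covers.

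The separately displayed inequality $\dim f(X) \le \dim X$ is false for arbitrary continuous maps (space-filling curves), so I read it as asserted under the unstated hypotheses compatible with this paper's applications --- namely $f$ locally injective continuous, locally Lipschitz, or $C^1$, matching the clauses of Lemma~\ref{Lemma4.3A}. The locally Lipschitz case follows from Hausdorff-dimension monotonicity under Lipschitz maps combined with \eqref{B.1}; the other two cases reduce, via $\sigma$-compactness of the domain, to pieces on which $f$ is a homeomorphism onto its image and hence preserves dimension. For (c), cover the separable $d$-manifold $M$ by countably many compact Euclidean balls $B_n$, each with $\dim B_n = {\mathrm{dim}}_H B_n = d$ by classical results; the sum theorem and (a) then give $\dim M \le d$ and ${\mathrm{dim}}_H M \le d$, while the reverse inequalities follow from (a) applied to a single Euclidean chart.

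For the main case (d), note that $S$ is closed in $\ov M$ by Definition~\ref{Defn1.1}, so $M$ is an open subset of the compact metric space $\ov M$, hence locally compact, separable, and metric, and therefore $\sigma$-compact: write $M = \bigcup_n K_n$ with each $K_n$ compact, and thus closed in $\ov M$. Then
$$\ov M \;=\; S \cup \bigcup_n K_n$$
is a countable union of closed subsets of $\ov M$. By (a) and (c), $\dim K_n \le \dim M = d$; by hypothesis $\dim S \le d-2$. Part (b) then yields $\dim \ov M \le d$, while $\dim \ov M \ge \dim M = d$ is immediate from (a). The principal technical obstacle is the sum theorem for covering dimension in (b), which is classical but nontrivial and best imported from Engelking; a secondary subtlety is to make the hypotheses of the displayed $\dim f(X) \le \dim X$ explicit, since it fails for general continuous maps and needs to be restricted to the regularity classes relevant to Section~3.
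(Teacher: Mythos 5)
Your proof is correct and follows essentially the same route as the paper's: definition-chasing for the Hausdorff-dimension statements and citations of Engelking's subspace and countable closed sum theorems for covering dimension in (a)--(b), the classical computation for Euclidean balls in (c), and for (d) a decomposition of $\ov{M}$ into $S$ together with a countable union of closed subsets of $M$ followed by an application of (b). Two minor points of comparison: your exhaustion of $M$ by compact sets $K_n$ is slightly more careful than the paper's ``closed balls of integer radius in some metric on $M$'' (compactness makes closedness in $\ov{M}$ automatic), and you are right that the displayed inequality $\dim f(X)\le \dim X$ is an editing artifact --- the paper's proof never addresses it, the intended statement under regularity hypotheses on the maps being exactly Lemma~\ref{LemmaA2}.
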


\begin{proof}
For Hausdorff dimension, (a), (b) and  (c) follow easily from the definition.  For   covering dimension,   (a) and (b)   are  Theorems~4.1.7 and  4.1.9  of \cite{E}, respectively, and  (c) is Corollary~1 of Section IV.4 of \cite{HW}.  Finally, note that $M$ can be written as a countable union of closed subsets $B_i$ (closed balls of integer radius in some metric) each with $\dim  B_i=d$.  Applying (b) to $\ov{M}= S\cup \bigcup B_i$ with $\dim S\le d-2$  shows that $\dim  \ov{M}=d$.  
\end{proof}

\medskip

  In general, a continuous map can increase covering dimension, as occurs for Peano's space-filling curve.  One can avoid such pathologies by working with Hausdorff dimension and Lipschitz maps.
Recall that a map $f:X\to Y$ between metric spaces is called Lipschitz if there is a constant $C>0$ such that
$$
\dist(f(x), f(y))\ \le\ C\, \dist(x, y)
$$
for all $x, y\in X$, and is {\em locally Lipschitz} if every point in $X$ has a neighborhood in which $f$ is Lipschitz.       Note that if $f$ is  locally Lipschitz then its restriction to  a compact set $K\subset X$ is Lipschitz.  Also note  that   the definition of Hausdorff dimension implies that if $f$ is  Lipschitz then 
\bear\label{B.LipHaus}
{\mathrm{dim}}_H f(X)\ \le \ {\mathrm{dim}}_H X.
\eear

\begin{lemma}
\label{LemmaA2}
Suppose that a subset $S$ of a metric space $Y$ is contained in the union of the images of a countable collection of    maps $\phi_n:U_n\to Y$, where  
each $U_n$ is  a $\sigma$-compact topological manifold of dimension $\le d$, and either
\begin{itemize}\setlength{\itemsep}{4pt}
\item[(i)]    $\phi_n$ continuous and locally injective, or 
\item[(ii)]   $\phi_n$ is locally Lipschitz, or
\item[(iii)]  $\phi_n$ is a $C^1$ map between $C^1$ manifolds.
\end{itemize}     
 Then $\dim  S \le d$ and, if $S$ is compact, 
$\sHH_k(S) = 0$ for all $k> d$.
\end{lemma}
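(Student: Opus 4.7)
The statement has two claims, and the second (Steenrod vanishing) follows immediately from the first (covering dimension bound): if $S$ is compact with $\dim S \le d$, then Lemma~\ref{LemmaB1} yields $\sHH_k(S) = 0$ for $k > d$. So the plan is to prove $\dim S \le d$, and everything reduces to estimating $\dim \phi_n(K)$ for compact $K \subseteq U_n$.

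My first step is to reduce to a single compact set. Using $\sigma$-compactness, write each $U_n = \bigcup_m K_{n,m}$ with $K_{n,m}$ compact. Then $\phi_n(K_{n,m})$ is compact (hence closed in $Y$), and $T = \bigcup_{n,m} \phi_n(K_{n,m})$ is an $F_\sigma$ subset of $Y$ containing $S$. Since each $\phi_n(K_{n,m})$ is closed in $T$, the countable sum theorem (Lemma~\ref{DimensionTheoryLemma}(b)) gives $\dim T \le \sup_{n,m} \dim \phi_n(K_{n,m})$, and monotonicity (Lemma~\ref{DimensionTheoryLemma}(a)) gives $\dim S \le \dim T$. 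So it is enough to show $\dim \phi_n(K) \le d$ for each compact $K \subseteq U_n$.

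Next, I would cover $K$ by finitely many compact sets on which the hypothesis takes its strong form. Each $x \in K$ has a neighborhood $V_x \subseteq U_n$ on which $\phi_n$ is either injective (case~(i)) or Lipschitz (cases~(ii) and (iii), using that $C^1$-maps between $C^1$-manifolds are locally Lipschitz). Shrinking to compact neighborhoods and extracting a finite subcover by compactness of $K$, we write $K \subseteq \bigcup_{j=1}^N L_j$ with $\phi_n|_{L_j}$ satisfying the strong form. Then $\phi_n(K) \subseteq \bigcup_j \phi_n(L_j)$ is a finite union of compacts, so by Lemma~\ref{DimensionTheoryLemma}(b) it suffices to bound $\dim \phi_n(L_j) \le d$ for each $j$.

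Finally, for a single compact $L \subseteq U_n$: in case~(i), $\phi_n|_L$ is a continuous bijection from a compact Hausdorff space onto its image, hence a homeomorphism, so $\dim \phi_n(L) = \dim L \le \dim U_n = d$ by Lemma~\ref{DimensionTheoryLemma}(a),(c). In cases~(ii),(iii), $\phi_n|_L$ is Lipschitz into the metric space $Y$, so \eqref{B.LipHaus} gives ${\mathrm{dim}}_H\, \phi_n(L) \le {\mathrm{dim}}_H L \le {\mathrm{dim}}_H U_n = d$ by Lemma~\ref{DimensionTheoryLemma}(c), and then \eqref{B.1} yields $\dim \phi_n(L) \le {\mathrm{dim}}_H \phi_n(L) \le d$. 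The main subtle point is case~(i): one must not try to apply the Hausdorff-dimension argument there (since a mere continuous map can raise Hausdorff dimension, cf.\ space-filling curves), but instead exploit local injectivity plus compactness to produce a local homeomorphism and transport covering dimension directly. The other item worth double-checking is that in case~(iii) the local Lipschitz estimate is with respect to the fixed metric on $Y$, which holds whenever the given metric is compatible with the $C^1$ structure on the target.
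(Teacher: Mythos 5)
Your proposal is correct and follows essentially the same route as the paper's proof: reduce via $\sigma$-compactness and the countable sum theorem to compact pieces on which $\phi_n$ is injective (case (i), giving a homeomorphism onto the image and preservation of covering dimension) or Lipschitz (cases (ii)--(iii), giving the Hausdorff-dimension bound via \eqref{B.LipHaus} and \eqref{B.1}), then finish with Lemma~\ref{LemmaB1}. The only difference is cosmetic (finite subcovers of a compact $K$ versus refining the countable cover directly), and your cautionary remarks about case (i) versus the Hausdorff-dimension argument and about the choice of metric in case (iii) are sound.
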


\begin{proof}
Each $U_n$ is $\sigma$-compact, so can be covered by a countable collection of open sets  $\{B_{mn}\}$ with compact closures $\ov{B}_{mn}$.  Then $\phi_n$  restricts to maps $\phi_{mn}:\ov{B}_{mn}\to Y$.

In case (i),  $\phi_n$ is locally injective, so we can assume, after refining the cover $\{B_{mn}\}$, that each $\phi_{mn}$ is injective.    Then  
$\phi_{mn}$ is    a continuous injection from a compact set to a Hausdorff space, therefore  a homeomorphism onto its image.   Since covering dimension is a homeomorphism invariant, parts (a) and (c) of  Lemma~\ref{DimensionTheoryLemma}   show that   the image satisfies

$$
\dim \, \phi_{mn}(\ov{B}_{mn})\  =\ \dim \,  \ov{B}_{mn}\ \le\ d.
$$

In  case (ii), the assumptions also  imply that $U_n$ is metrizable.  After fixing a metric,   inequalities \eqref{B.1},  \eqref{B.LipHaus} and   Lemma~\ref{DimensionTheoryLemma}(a,c) imply that 
$$
\dim \, \phi_{mn}(\ov{B}_{mn})\ \le\ {\mathrm{dim}}_H\, \phi_{mn}(\ov{B}_{mn})\ \le\ {\mathrm{dim}}_H\,  \ov{B}_{mn}\ \le\ d.
$$
This  inequality also holds in case (iii) because   any $C^1$ map is locally Lipschitz. 

 In either case, we can apply  parts (a) and (b) of Lemma~\ref{DimensionTheoryLemma} to conclude that
$$
\dim  S \ \le \ \dim \  \bigcup_{m,n} \phi_{mn}(\ov{B}_{mn}) \ \le \ \sup_{m,n}\dim  \, \phi_{mn}(\ov{B}_{mn})\ \le\  d.
$$
The  lemma then follows by applying Lemma~\ref{LemmaB1}. 
\end{proof}


{\small

}

\end{document}